\numberwithin{equation}{section}
\newcommand{\be}{\begin{equation}}
\newcommand{\ee}{\end{equation}}
\newcommand{\ben}{\begin{eqnarray*}}
\newcommand{\enn}{\end{eqnarray*}}
\newtheorem{proposition}{Proposition}[section]
\newtheorem{theorem}{\textbf Theorem}[section]
\newtheorem{lemma}{\textbf Lemma}[section]
 \numberwithin{equation}{section}
\newtheorem{remark}{Remark}[section]
\begin{document}

\title{{\textbf{Existence and Stability of Localized Patterns in the Population Models with Large Advection and Strong Allee Effect}}}

\author{Fanze Kong \thanks{Department of Mathematics, University of British Columbia, Vancouver, BC, V6T 1Z2, Canada; fzkong@math.ubc.ca} and
Jun-cheng Wei \thanks{Department of Mathematics, University of British Columbia, Vancouver, BC, V6T 1Z2, Canada; jcwei@math.ubc.ca}
        }

        \date{\today}
\maketitle
\vspace{-0.44in}
\begin{abstract}
 The strong Allee effect plays an important role on the evolution of population in ecological systems.  One important concept is the Allee threshold that determines the persistence or extinction of the population in a long time.  In general, a small initial population size is harmful to the survival of a species since when the initial data is below the Allee threshold the population tends to extinction, rather than persistence.  Another interesting feature of population evolution is that a species whose movement strategy follows a conditional dispersal strategy is more likely to persist.  In other words, the biased movement can be a benefit for the persistence of the population.  The coexistence of the above two conflicting mechanisms makes the dynamics rather intricate.  However, some numerical results obtained by Cosner et. al. (SIAM J. Appl. Math., Vol. 81, No. 2, 2021) show that the directed movement can invalidate the strong Allee effect and help the population survive.  To study this intriguing phenomenon, we consider the pattern formation and local dynamics for a class of single species population models that is subject to the strong Allee effect.  We first rigorously show the existence of multiple localized solutions when the directed movement is strong enough.  Next, the spectrum analysis of the associated linear eigenvalue problem is established and used to investigate the stability properties of these interior spikes.  This analysis proves that there exists not only unstable but also linear stable steady states.  Then, we extend results of the single equation to coupled systems, and also construct several non-constant steady states and analyze their stability.  Finally, numerical simulations are performed to illustrate the theoretical results.
 \end{abstract}
{\sc 2020 MSC}: {35B99 (35B36)}\\
{\sc Keywords}: Reaction-diffusion, Allee effect, Ideal free distribution, Reduction method

\section{Introduction}
In this paper, we mainly investigate the aggregation phenomena and dynamics of the following single reaction-diffusion equation for $u=u(x,t)$ with the no-flux boundary condition:
\begin{align}\label{origin}
\left\{\begin{array}{ll}
u_t=\nabla\cdot(d_1\nabla u-\chi u\nabla A)+\mu u(1-u)(u-\theta),&x\in\Omega,t>0,\\
(d_1\nabla u-\chi u\nabla A)\cdot \textbf{n}=0,&x\in\partial\Omega, t>0,\\
u(x,0)=u_0(x)\geq 0,&x\in\Omega.
\end{array}
\right.
\end{align}
Here $x$ and $t$ are space and time variables, $d_1$, $\chi$ and $\mu$ are arbitrary positive constants, and $\textbf{n}$ is the unit outer normal on the boundary $\partial\Omega$.  To construct the non-constant steady states, we also focus on the stationary problem of (\ref{origin}), which is:
\begin{align}\label{ss}
\left\{\begin{array}{ll}
0=\nabla\cdot(d_1\nabla u-\chi u\nabla A)+\mu u(1-u)(u-\theta),&x\in\Omega,\\
(d_1\nabla u-\chi u\nabla A)\cdot \textbf{n}=0,&x\in\partial\Omega.
\end{array}
\right.
\end{align}

Equation (\ref{origin}) serves as a paradigm to describe the dynamics of one population with the effect of some known signal subject to the Allee Principle \cite{Allee,SSF}, where $u:\Omega\times [0,\infty) \mapsto [0,\infty)$ denotes the density of a population and $A$ is a known stimulus that governs the directed movement; the constant $d_1$ represents the population diffusion rate, $\chi$ reflects the strength of the biased movement, while the source $f(u):=u(1-u)(u-\theta)$ models the Allee effect and $\theta\in(0,1)$ is the Allee threshold.   

 The general form of system (\ref{origin}) was proposed by Cosner and Rodriguez \cite{Nancy}, which reads:
 \begin{align}\label{Cosner}
\left\{\begin{array}{ll}
u_t=\nabla\cdot(\overbrace{d_1\nabla u}^{\text{random (flux)}}-\overbrace{\chi u\nabla A}^{\text{drift (flux)}})+\overbrace{\mu u(1-u)(u-\theta)}^{\text{Allee Effect Source}},&x\in\Omega,t>0,\\
\mathcal B[u]=0,&x\in\partial\Omega,t>0,\\
u(x,0)=u_0(x)\geq 0,&x\in\Omega,
\end{array}
\right.
 \end{align}
where $\mathcal B[u]=0$ represents either homogeneous Dirichlet or no-flux boundary conditions.  In particular, they obtain a set of qualitative and numerical results concerning the short time dynamics and steady states of system (\ref{Cosner}).  Moreover, to study the interaction between two species, they extended equation (\ref{Cosner}) to the following system:
\begin{align}\label{14}
\left\{\begin{array}{ll}
u_t=\mathcal M_u u+ug(x,u+v),\\
v_t=\mathcal M_v v+vg(x,u+v),
\end{array}
\right.
\end{align}
where $(u,v)$ are the population densities of two species and the dispersal operators are defined by 
$$\mathcal M_u u:=\nabla\cdot(\nabla u-\chi_1 u\nabla A),~\quad\chi_1>0,$$
and 
$$\mathcal M_v v:=\nabla\cdot(\nabla v-\chi_2 v\nabla A),~\quad \chi_2>0;$$
while the growth pattern is $g(x,u+v):=(r(x)-u-v)(u-v-\theta)$ and where the resource distribution is given by $r(x)=e^{A(x)}.$  Some numerical results for (\ref{14}) presented in \cite{Nancy} demonstrated that two populations cooperate at low densities and compete at high densities. 

To study this phenomenon, we consider the coupled system (\ref{14}) in the following two cases of $\chi_1$ and $\chi_2$:
\begin{itemize}
    \item[(i).] $\chi_1=\chi$, $\chi_2=1$, where $\chi>0$ represents the speed of the established species;
    \item[(ii).] $\chi_1=\chi$, $\chi_2=c\chi,$ where constant $c>1$ implies the invading species is faster.
\end{itemize}
In particular, we prove the existence of non-constant steady states for system (\ref{14}) in case (i) and case (ii), then study their stability properties.  
\subsection{Allee Effect}

The well-accepted definition of Allee effect is the positive relationship between population density and individual fitness.  This effect often occurs under situations involving the survival and reproduction of animals, such as habitat alteration, mate-finding \cite{Dennis,Gomes}, etc.

In terms of the scale, the Allee principle is typically decomposed into the component Allee effect and the demographic Allee effect.  The former emphasizes the relationship between any measurable component of survival rates and density size \cite{BAC}, while the latter highlights the overall correlation between them \cite{KDL}.  Many researchers tend to consider macro-population problems, and thereby the demographic Allee effect is more popular.  Some significant concept therein is the critical population size.  When a population threshold exists, the demographic Allee effect is the so-called strong Allee effect; otherwise it is named the weak Allee effect.  In general, when the initial density is below (above) the critical threshold, the population tends to be extinct (persistent).   The critical population size is called the Allee threshold and the relevant models have been intensively studied, see \cite{Liu,WSW2011,WSWJDE,WSW}.  

The most popular and simplest equation used to model the population dynamics subject to the strong Allee effect is
\begin{align*}
u_t=u(r-u)(u-\theta),
\end{align*}
where $r$ represents the environmental resources and $\theta\in (0,1)$ is the Allee threshold.  Here we define $g(u):=(r-u)(u-\theta)$ which admits the bistable growth pattern.  It can be seen that when the environment is homogeneous, $u\equiv \theta$ and $u\equiv r$ are two constant equilibria.  In particular, $u\equiv\theta$ is unstable and $u\equiv r$ is stable.  

\subsection{Directed Movement: Taxis and Advection}
A taxis is the mechanism by which organisms direct their movements in response to the environmental stimulus gradient.  In terms of stimulus such as wind, light, chemical signal, etc., taxis can be identified as Anemotaxis, Phototaxis, Chemotaxis and so on.  In particular, the effect of taxis on population dynamics is often interpreted as the conditional dispersal of species \cite{MH} and the advection term presents a paradigm to model it mathematically.

Combining the biased and unbiased dispersal, many reaction-diffusion-advection models have been proposed in the literature to analyze biological problems involving population dynamics.  The survey paper \cite{Cosner} summarizes a class of such systems and their applications.  The conditional dispersal in general is a benefit for the persistence of a species \cite{BC1995}, with the sensible explanation that individuals can perceive the favorable environmental signals such as the presence of food, and then move towards the stimulus and finally aggregate. 

There have been many previous results for the case where the population dynamics follows a logistic growth \cite{CC1991,CL2003,BC1995,CS2012,HW2016,lam2010,lam2011,lam2012,lam2014}.  In particular, Belgacem and Cosner \cite{BC1995} considered the following reaction-diffusion-advection model:
 \begin{align}\label{logistic}
\left\{\begin{array}{ll}
u_t=\nabla\cdot(d_1\nabla u-\chi u\nabla A)+\mu u(A-u),&x\in\Omega,t>0,\\
(d_1\nabla u-\chi u\nabla A)\cdot \textbf{n}=0,&x\in\partial\Omega,t>0,\\
u(x,0)=u_0(x)\geq 0,&x\in\Omega,
\end{array}
\right.
 \end{align}
where the environment is spatially heterogeneous and the boundary acts as a reflecting barrier.  They proved the population tends to be persistent if $\chi$ is large, which implies that the strong advection effect is beneficial.  Moreover, they showed that there exists some unique non-negative constant $\bar\mu_*$ depending on $\chi$ such that when $\mu>\bar\mu_*$, (\ref{logistic}) admits a unique positive global attractor.  Cosner and Lou \cite{CL2003}  further showed that the effect of the biased movement is not always beneficial and depends crucially on the shape of the domain, where it was established that non-convex domains can be harmful to the persistence of the population (See also  interesting related results in Chen and Lou \cite{ChenLou2012}).  It is worthwhile to mention the interesting work of Lam and Ni on the Lotka-Volterra competition models with the logistic growth.  They considered the competition between the ``smarter'' species with a large advection and the other who just follow random diffusion strategies in the heterogeneous environment.   Under some mild assumption of the resources function $A(x)$, the authors \cite{lam2010,lam2011,lam2012} showed that the densities of species must concentrate at all non-degenerate local maximum points of the resources not only in 1D but also in any dimensions.  Lam and Ni \cite{lam2014} further focused on the general environment function $A(x)$ and established many interesting results.  They found that there exists the competitive exclusion phenomenon when $A(x)$ attains its maximum everywhere on some open set, which is different from the well-known results shown in the case that $A(x)$ only has finitely many non-degenerate maxima.

There are also many different results when the boundary condition is assumed to be Dirichlet:
$$u(x,t)=0,\quad x\in\partial\Omega,~~ t>0,$$
which is the so-called lethal boundary.  For instance, a strong drift term may be harmful rather than helpful \cite{BC1995}, and more interesting results were shown in \cite{CC1991,Hess}.  Similar to the logistic growth, Allee effects also have rich applications in modelling population dynamics.  There are a few references focused on discussing the models subject to Allee effects \cite{CCH1996,NJRB2021,Zhang}. 


\subsection{Motivations and Main Results}
Cosner and Rodriguez \cite{Nancy} combined the free and conditional dispersal to model the movement of a population with the assumption that its dynamics is governed by the strong Allee Principle.  They proposed (\ref{Cosner}) and studied the existence of equilibrium subject to the lethal boundary or reflecting boundary.  Furthermore, some numerical simulations were presented to illustrate that the biased movement plays a vital role on overcoming a strong Allee effect.  The figures in \cite{Nancy} show if $\chi$ is large, i.e. the advection effect is strong, the population will persist rather than disappear even though the initial size is below the Allee threshold $\theta$.

To confirm this numerical experimental finding, we perform theoretical studies by considering system (\ref{origin}) and (\ref{ss}).  Our main goal is to rigorously construct non-constant solutions of (\ref{ss}), and then investigate their stability properties within (\ref{origin}).  In particular, since we focus in understanding the influence of the conditional dispersal rate $\chi$ on the strong Allee effect, we set the remaining parameters $d_1$ and $\mu$ to one.

An immediate consequence of the no-flux boundary condition is the following integral constraint satisfied by all classical solutions of (\ref{ss}):
\begin{align}\label{constraint}
\int_{\Omega} u(1-u)(u-\theta)dx=0.
\end{align}
It can be observed from (\ref{constraint}) that system (\ref{origin}) can admit different nontrivial patterns.  Indeed, some formal analysis implies this integral constraint determines the height of each local interior spike.  We suppose $A$ is smooth and radial with only one non-degenerate local maximum point at $0$. Then we expand $A$ as $A=A_0-\frac{a}{2}\vert x\vert^2+O(\vert x\vert^3)$, where $A_0:=A(0)$ is the local maximum of $A$ and $a:=A_{rr}(0)>0$.  Set $U=U_0+\epsilon U_1+\cdots$ and let
$\chi:=\frac{1}{\epsilon^2}$, $y:=\frac{x}{\epsilon}$, $U(y):=u(x)$, $F(U):=f(u)$ to obtain the following leading order equation:
\begin{align}\label{core}
\left\{\begin{array}{ll}
0=\nabla_y\cdot(\nabla_y U_0+ aU_0\cdot y),&y\in\mathbb R^n,\\
U_0(y)\rightarrow 0, ~~as~~ \vert y\vert\rightarrow\infty,\\
\int_{\mathbb R^n}F(U_0)dy=0.
\end{array}
\right.
\end{align}
By solving (\ref{core}), we have
\begin{align}\label{formalu0}
U_0=c_0e^{-\frac{a}{2}\vert y\vert^2}.
\end{align}
Upon substituting $U_0$ into (\ref{constraint}), we find $c_0$ satisfies
\begin{align}\label{17}
\int_{\mathbb R^n} e^{-\vert y\vert^2}(c_0 e^{-\vert y\vert^2}-\theta)(1-c_0e^{-\vert y\vert^2})dy=0.
\end{align}
By a straightforward calculation, we readily find that (\ref{17}) is equivalent to the following quadratic algebraic equation:
\begin{align}\label{quad}
2^{\frac{n}{2}}c_0^2-(1+\theta)3^{\frac{n}{2}}c_0+6^{\frac{n}{2}}\theta=0.
\end{align}
It is easy to check that there exists 
\begin{align}\label{theta1}
\theta_1:=\frac{2^{n+1}-2\sqrt{4^n-2^n\cdot 3^{\frac{n}{2}}}-3^{\frac{n}{2}}}{3^{\frac{n}{2}}}\in(0,1)
\end{align}
such that for $\theta\in(0,\theta_1)$, there are two values for $c_0$ given by
\begin{align}\label{c0}
c_{01}=\frac{(1+\theta)3^{\frac{n}{2}}+\sqrt{\delta}}{2^{\frac{n}{2}+1}},\quad c_{02}=\frac{(1+\theta)3^{\frac{n}{2}}-\sqrt{\delta}}{2^{\frac{n}{2}+1}}, 
\end{align}
where $\delta:=3^n\theta^2+2\cdot 3^n\theta-4\cdot 12^{\frac{n}{2}}\theta+3^n.$  Thanks to (\ref{c0}) and (\ref{formalu0}), the asymptotic profiles of single interior spikes can be expressed explicitly and are shown in Figure \ref{singlespike}.  
\begin{figure}[h!]
\centering
\begin{subfigure}[t]{0.5\textwidth}
    \includegraphics[width=\linewidth]{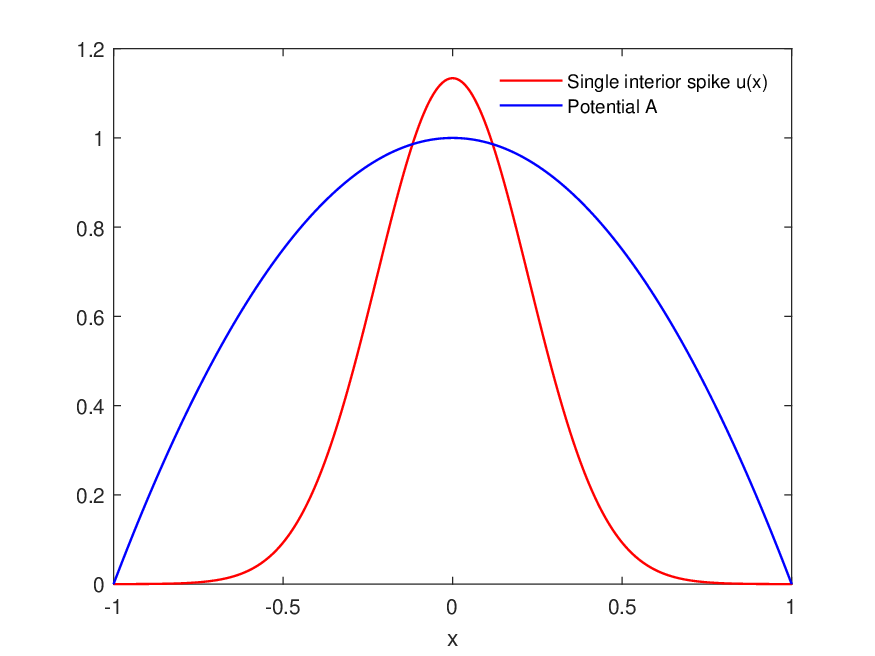}
\end{subfigure}\hspace{-0.25in}
\begin{subfigure}[t]{0.5\textwidth}
  \includegraphics[width=\linewidth]{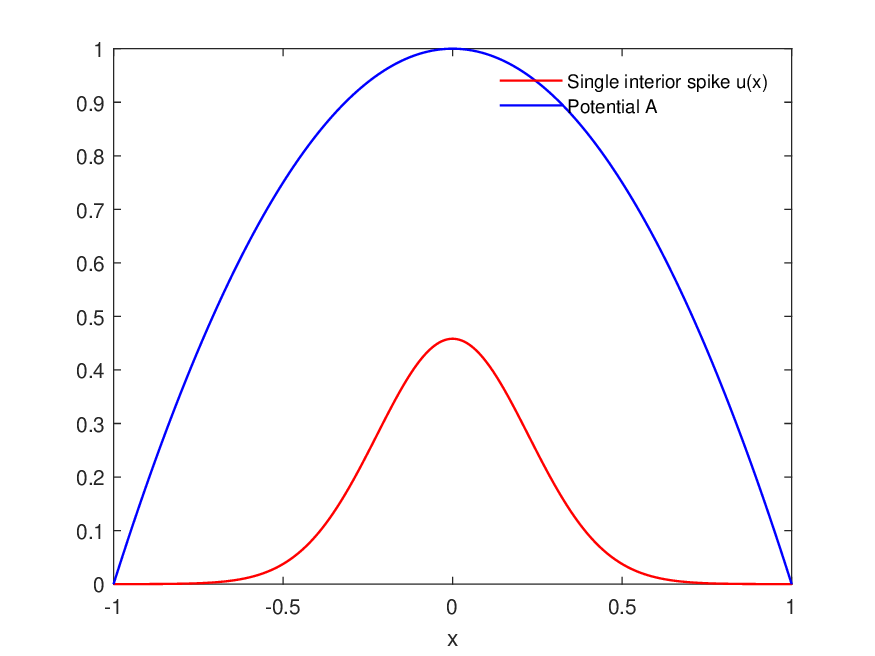}
\end{subfigure}
\\ \vspace{-0.15in}
\caption{\textit{For a 1-D domain with given potential $A=1-x^2$, Allee threshold $\theta=0.3$ and conditional dispersal rate $\chi=10$, we show the leading order profiles of single interior spikes defined by (\ref{formalu0}) with $c_0=c_{01}=1.1339$ (left) and $c_0=c_{02}=0.4582$ (right).} We shall prove that the left single interior spike is stable but the right one is unstable.}
\label{singlespike}
\vspace{-0.0in}
\end{figure}

 We would like to point out that when $\theta\in(\theta_1,1)$, (\ref{core}) only admits the solution $U_0=0$ since the quadratic equation (\ref{quad}) does not have any real solution.  As a consequence, when $\theta\in(\theta_1,1)$, there only exists a trivial pattern or the only one non-trivial spatially homogeneous pattern to (\ref{origin}) that we are not interested in.  Therefore, we only focus on the case $\theta\in(0,\theta_1)$ rather than $\theta\in[\theta_1,1).$  The above formal argument supports our claim that the height of a spike, given by $c_0$, is determined by the integral constraint (\ref{constraint}) and given by (\ref{c0}).  Moreover, our forthcoming rigorous analysis will prove that this statement holds for not only this special form for $A$ but also for a more general class of $A$.

Before stating our main results for the pattern formation of (\ref{origin}), we discuss the properties of the signal $A$.  Indeed, it plays the vital role for the formation of nontrivial patterns within (\ref{origin}).  Numerical simulations exhibited in \cite{Nancy} show that the non-constant steady states to (\ref{origin}) tend to be concentrated at the local non-degenerate maximum points of $A$.  The formal asymptotic analysis given above also confirms this fact.  Now, we recall the assumptions satisfied by the admissible signal $A$ in \cite{Nancy}, which are as follows:
\begin{itemize}
\item[(A1).] $A\in C^2(\bar\Omega)$ is time independent and spatially heterogeneous;
\item[(A2).] $\Vert\Delta A\Vert_{L^\infty(\Omega)}\leq M$ for some constant $M>0$.
\end{itemize}
Assumption (A1) and (A2) are technical assumptions needed for the analysis.  For our analysis below, we also propose several new hypotheses on $A$:
\begin{itemize}
\item[(H1).] all critical points of A are either local non-degenerate maximum points, or critical
points with $\Delta A>0$;
\item[(H2).] $\frac{\partial A}{\partial \textbf{n}}<0$ holds for all $x\in\partial\Omega$.
\end{itemize}
 After supposing $A$ admits exactly $k$ non-degenerate local maximum points $x_1,\cdots,x_k$, where $x_m:=\big(x_m^{(1)},\cdots,x_m^{(n)}\big)^T$, $m=1,\cdots,k.$  we have from assumption (A1), (A2) and hypothesis (H1), (H2) that $A$ can be expanded at $x_m$ as
$$A=A_m-\frac{1}{2}\sum\limits_{i,j=1}^n\big(x^{(i)}-x^{i}_m\big)^Th_m^{(ij)}\big(x^{(i)}-x_m^{(i)}\big)+o\big(\vert x-x_m\vert^2\big),$$
where $A_m:=A(x_m)$ and $-h_m^{ij}$ is the $ij$-th entry of the Hessian matrix of $A$ at $x_m.$  It is necessary to point out that the Hessian matrix of $A$ at every local non-degenerate maximum point $x_m$ is negative definite.  To simplify our subsequent analysis, one can utilize the rotation transform to write the expansion of $A$ as
\begin{align}\label{expansionA1}
A=A_m-\frac{1}{2}\sum\limits_{i=1}^n\hat h_m^{(i)}\big(\hat x^{(i)}-\hat x_m^{(i)}\big)^2+o\big(\vert x-x_m\vert^2\big),
\end{align}
where $-\hat h_m^{(i)}<0$ is the $i$-th eigenvalue of the Hessian matrix of $A$ at $\hat x_m$ and $\hat x$, $\hat x_m$ are rotated vectors.  We further rewrite (\ref{expansionA1}) as the following form
\begin{align}\label{expansionA}
A=A_m-\frac{1}{2}\sum\limits_{i=1}^n h_m^{(i)}\big( x^{(i)}- x_m^{(i)}\big)^2+o\big(\vert x-x_m\vert^2\big),
\end{align}
where the notations $h^{(i)}_{m}$, $x^{(i)}$ and $x^{(i)}_m$ are used to substitute $\hat h^{(i)}_{m}$, $\hat x^{(i)}$ and $\hat x^{(i)}_m$ in (\ref{expansionA}), respectively without confusing readers.

With the help of the above discussion, now we summarize the first set of our results regarding the stationary problem in the following theorem:
\begin{theorem}\label{thm11}
Under the assumptions (A1)-(A2) and hypotheses (H1)-(H2), define $k$ as any positive but fixed integer, let $\mathcal I_s\subseteq\{1,\cdots,k\}$ be any nonempty subset and $\mathcal I_b$ be any subset of $ \mathcal I_s,$ then we have there exists $\chi_0$ such that when $\chi>\chi_0$ and $\theta\in(0,\theta_1)$ with $\theta_1$ being given in (\ref{theta1}), (\ref{ss}) admit solutions having the following forms:
\begin{align}\label{u0}
 u_{s}(x;\chi)=\sum_{m=1}^kc_{m,\chi} e^{-\frac{1}{2}\sum\limits_{i=1}^nh_m^{(i)}\chi(x-x^{(i)}_m)^2}+o(1),
 \end{align}
 where $o(1)\rightarrow 0$ uniformly as $\chi \rightarrow \infty$.  In particular, if $m\in \mathcal I_b$, $c_{m,\chi}=c_{01}+O(\frac{1}{\sqrt{\chi}})$; if $m\in \mathcal I_s\backslash \mathcal I_b$, $c_{m,\chi}=c_{02}+O(\frac{1}{\sqrt{\chi}})$; if $m\not\in \mathcal I_s$, $c_{m,\chi}=O(\frac{1}{\sqrt{\chi}})$, where $c_{01}$ and $c_{02}$ are defined in (\ref{c0}).
\end{theorem}
\begin{remark}
As is shown in Theorem (\ref{thm11}), in contrast to many reaction-diffusion-advection models such as the minimal Keller--Segel model \cite{Keller1970} where the single interior spiky solution is unique \cite{Chen2014}, (\ref{ss}) has a variety of single interior spikes located at each local non-degenerate maximum point of $A$.
\end{remark}
\begin{figure}[h!]
\centering
\begin{subfigure}[t]{0.3\textwidth}
    \includegraphics[width=\linewidth]{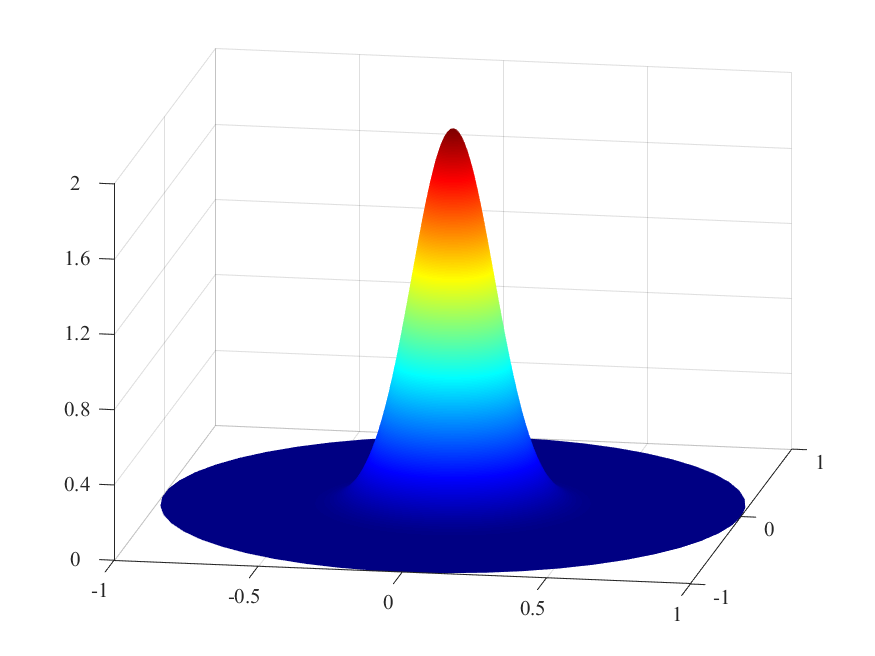}
\caption*{\textbf{(L-1)}.  Potential $A$}
\end{subfigure}\hspace{-0.25in}
\begin{subfigure}[t]{0.3\textwidth}
  \includegraphics[width=\linewidth]{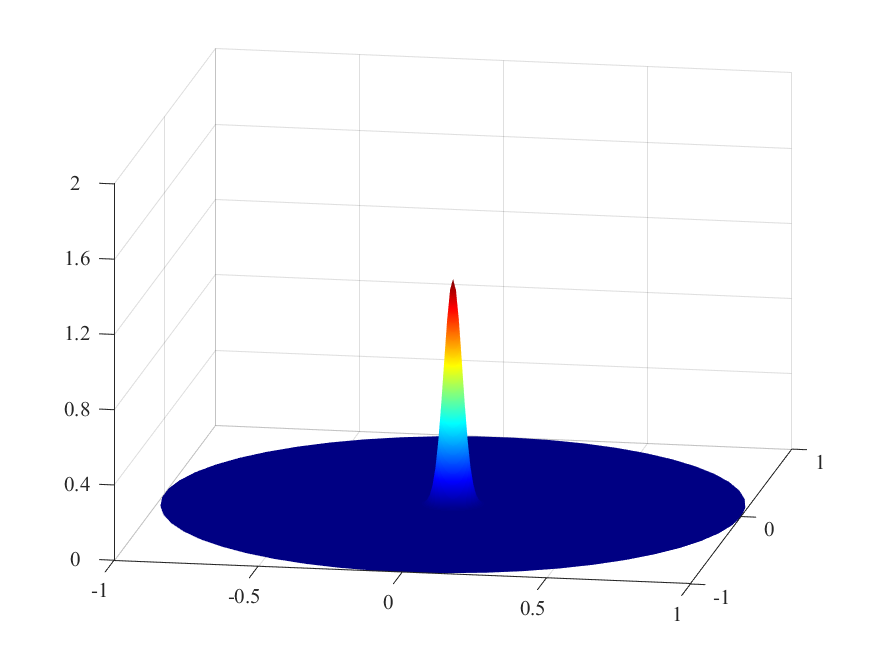}
\caption*{\textbf{(C-1)}.  Single interior spike with $c_{0}=1.2$}
\end{subfigure}\hspace{-0.25in}
\begin{subfigure}[t]{0.3\textwidth}
    \includegraphics[width=\linewidth]{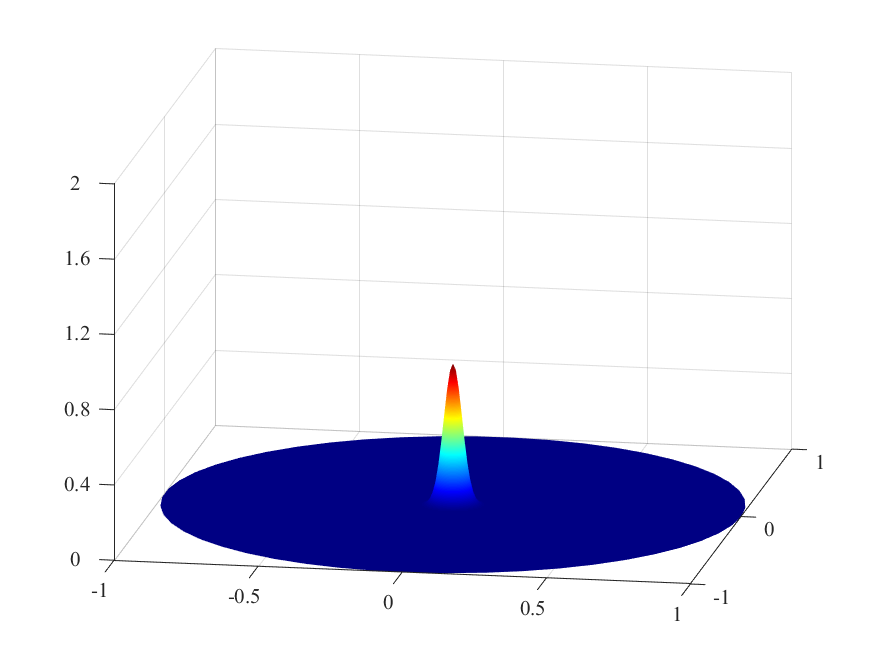}
\caption*{\textbf{(R-1)}.  Single interior spike with $c_{0}=0.75$}
\end{subfigure}

\begin{subfigure}[t]{0.3\textwidth}
    \includegraphics[width=\linewidth]{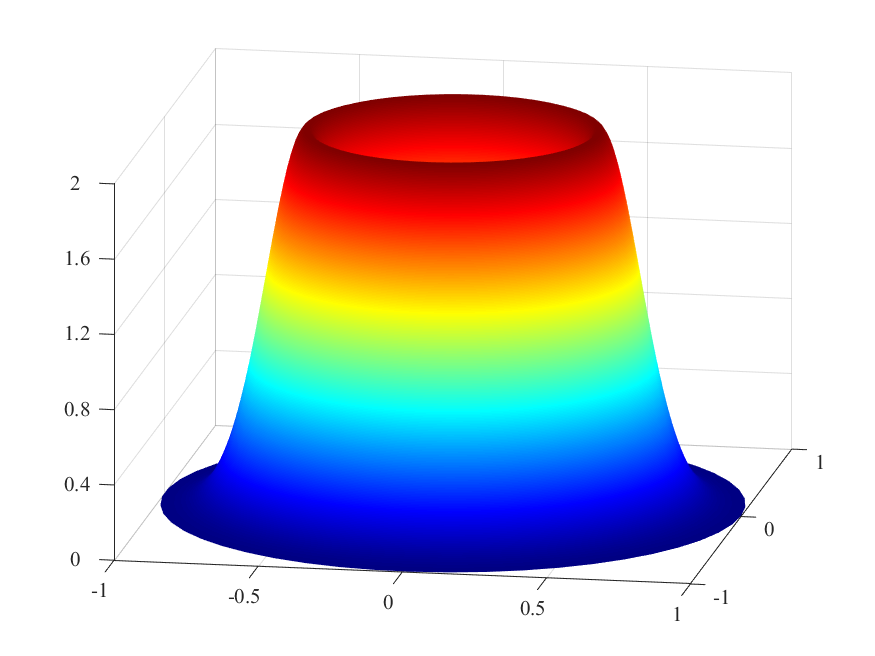}
\caption*{\textbf{(L-2)}.  Potential $A$}
\end{subfigure}\hspace{-0.25in}
\begin{subfigure}[t]{0.3\textwidth}
  \includegraphics[width=\linewidth]{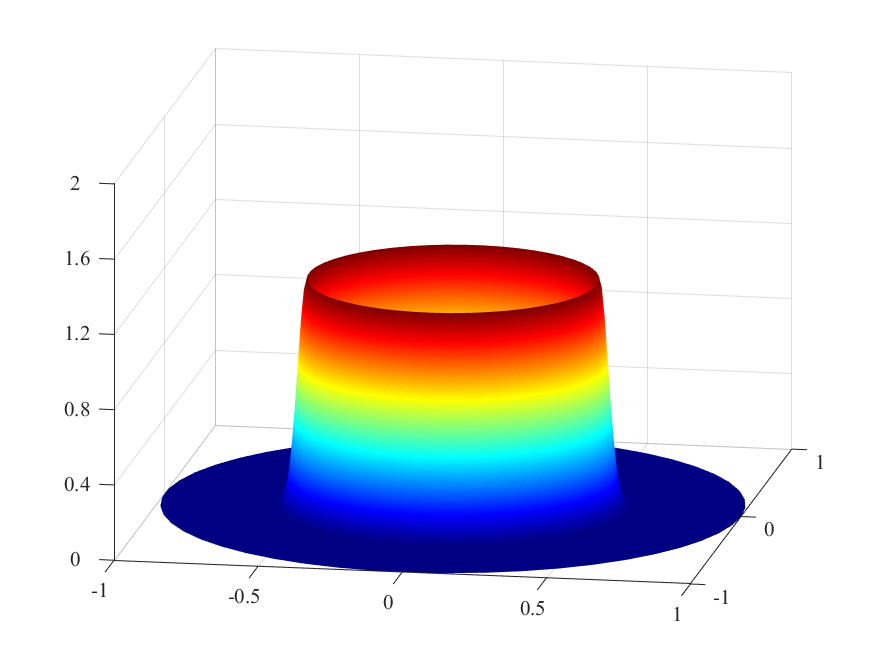}
\caption*{\textbf{(C-2)}.  Single interior spike with $c_{0}=1.2$}
\end{subfigure}\hspace{-0.25in}
\begin{subfigure}[t]{0.3\textwidth}
    \includegraphics[width=\linewidth]{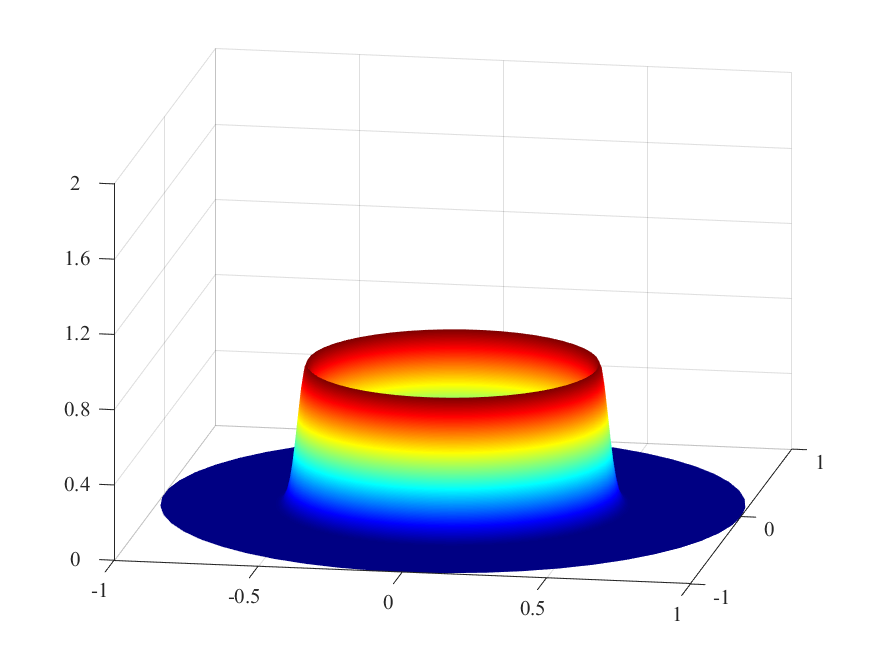}
\caption*{\textbf{(R-2)}.  Single interior spike with $c_{0}=0.75$}
\end{subfigure}
\caption{\textit{Top: Given signal $A:=2e^{-\frac{x^2+y^2}{\sigma^2}}$ with $\sigma=0.2$, the leading order profiles of single spikes defined in Theorem \ref{thm11} presented on the middle and the right with $\theta=0.3$ and $\chi=10$.  Bottom: Profiles of the signal and single interior spikes with the same condition except $A:=2e^{-\frac{\big(\sqrt{x^2+y^2}-0.5\big)^2}{\sigma^2}}$.}  We find that the shapes of spiky solutions follow those of signals $A$. }
\label{figure2}
\end{figure}
Theorem \ref{thm11} states that there are many single and multiple interior spikes to (\ref{ss}), and the asymptotic profiles of some one-spike solutions given by (\ref{u0}) are shown in the unit ball in 2-D in Figure \ref{figure2}.  Moreover, one can find from (\ref{u0}) that as $\chi\rightarrow \infty$, the ``inner regions" will shrink to zero exponentially.  It is natural to further study the large time behavior of the nontrivial steady states established by Theorem \ref{thm11}.  Our next results are devoted to investigate the linearized eigenvalue problem of (\ref{origin}) around the spiky solutions in (\ref{u0}), which are summarized as:
\begin{theorem}\label{thm12}
Under the conditions and conclusions shown in Theorem \ref{thm11}, we have when $\chi>\chi_0$ and $\theta\in(0,\theta_0)$, the following alternatives hold:
\begin{itemize}
\item[(i).] if $c_{m,\chi}=c_{01}+O\big(\frac{1}{\sqrt{\chi}}\big)$ for all $m\in \mathcal I_{s}$, the steady state $u_s(x;\chi)$ is linearly stable;
\item[(ii).] if $c_{m,\chi}=c_{02}+O\big(\frac{1}{\sqrt{\chi}}\big)$ for some $m$, the steady state $u_s(x;\chi)$ is unstable.
\end{itemize}
\end{theorem}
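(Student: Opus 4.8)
The plan is to reduce the linearized eigenvalue problem to a family of decoupled local problems, one per spike, by first symmetrizing the non–self-adjoint advection operator and then rescaling around each concentration point. Linearizing (\ref{origin}) about $u_s$ via $u=u_s+\phi e^{\lambda t}$ gives
\[
\nabla\cdot(\nabla\phi-\chi\phi\nabla A)+f'(u_s)\phi=\lambda\phi,\qquad (\nabla\phi-\chi\phi\nabla A)\cdot\mathbf{n}=0,
\]
with $f'(u)=-3u^2+2(1+\theta)u-\theta$. The substitution $\phi=e^{\chi A}\psi$ turns the flux into $\nabla\phi-\chi\phi\nabla A=e^{\chi A}\nabla\psi$, so the problem becomes
\[
\mathcal L_\chi\psi:=e^{-\chi A}\nabla\cdot(e^{\chi A}\nabla\psi)+f'(u_s)\psi=\lambda\psi,\qquad \nabla\psi\cdot\mathbf{n}=0,
\]
which is self-adjoint in $L^2(\Omega;e^{\chi A}\,dx)$ with quadratic form $Q[\psi]=-\int_\Omega e^{\chi A}|\nabla\psi|^2\,dx+\int_\Omega e^{\chi A}f'(u_s)\psi^2\,dx$. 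Since conjugation by $e^{\chi A}$ preserves the spectrum, the spectrum is real and $u_s$ is linearly stable exactly when the top eigenvalue $\lambda_1(\chi):=\sup_\psi Q[\psi]/\|\psi\|_{e^{\chi A}}^2$ is negative.

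I would then analyze $Q$ locally. As $\chi\to\infty$ the weight $e^{\chi A}$ concentrates only at the non-degenerate interior maxima $x_1,\dots,x_k$ of $A$; by (H1)--(H2) there is no concentration at other critical points nor on $\partial\Omega$. Rescaling $y=\sqrt\chi(x-x_m)$ and using (\ref{expansionA}), the weight becomes $e^{\chi A_m}\rho_m(y)(1+o(1))$ with $\rho_m(y)=e^{-\frac12\sum_i h_m^{(i)}(y^{(i)})^2}$, while $u_s\to c_{0,m}\rho_m$. The local Rayleigh quotient is governed by the Ornstein--Uhlenbeck form $\big[-\chi\int\rho_m|\nabla_y\psi|^2\,dy+\int\rho_m f'(c_{0,m}\rho_m)\psi^2\,dy\big]/\int\rho_m\psi^2\,dy$, and the spectral gap $\int\rho_m|\nabla_y\psi|^2\ge(\min_i h_m^{(i)})\int\rho_m|\psi-\bar\psi|^2$ pushes every non-constant mode down by $O(\chi)$. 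Hence the top local eigenvalue is attained at $\psi\equiv\mathrm{const}$ and equals
\[
\lambda_0^{(m)}=\frac{\int_{\mathbb R^n}\rho_m f'(c_{0,m}\rho_m)\,dy}{\int_{\mathbb R^n}\rho_m\,dy}=\frac{G_m'(c_{0,m})}{\int_{\mathbb R^n}\rho_m\,dy},\qquad G_m(c):=\int_{\mathbb R^n}f(c\rho_m)\,dy.
\]
Writing $G_m(c)=-K_m6^{-n/2}\,c\,\big(2^{n/2}c^2-(1+\theta)3^{n/2}c+6^{n/2}\theta\big)$ with $K_m=\int\rho_m>0$, one sees the height $c_{0,m}$ is precisely the nonzero root of the quadratic in (\ref{quad}); differentiating at a root shows the sign of $G_m'(c_{0,m})$ is opposite to that of the quadratic's derivative, so $G_m'(c_{01})<0$ and $G_m'(c_{02})>0$, both bounded away from zero for $\theta\in(0,\theta_1)$. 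Thus each genuine spike contributes one $O(1)$ eigenvalue, negative for $c_{01}$ and positive for $c_{02}$, whereas at the vanishing spikes $(m>l)$ and in the outer region $u_s\to0$ forces $f'(u_s)\to-\theta<0$.

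For the instability statement (ii) this is immediate: a cut-off of the constant supported near a spike with $c_{m,\chi}=c_{02}+O(\chi^{-1/2})$ gives $Q[\psi]/\|\psi\|^2=\lambda_0^{(m)}+o(1)>0$, so $\lambda_1(\chi)>0$. For the stability statement (i) I would establish the matching upper bound $\lambda_1(\chi)<0$: introduce a partition of unity adapted to the spikes, bound $Q$ on each inner piece by $(\lambda_0^{(m)}+o(1))$ times its mass using the $\chi$-uniform gap above, bound the outer piece by $-\theta$ times its mass, and show the cross-terms are exponentially small because the spikes sit $O(\sqrt\chi)$ apart in the $y$-variable. Since all $\lambda_0^{(m)}=\lambda_0^{(c_{01})}$ are uniformly negative, this yields $\lambda_1(\chi)<0$.

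The main obstacle is the upper bound in (i): making the localization rigorous uniformly in $\chi$, i.e. controlling the inner--outer interaction and the coupling between distinct spikes, and proving the $\chi$-uniform spectral gap that suppresses all non-constant inner modes. Equivalently, one may phrase this as a Lyapunov--Schmidt reduction in which $\mathcal L_\chi$, restricted to the orthogonal complement of the approximate kernel spanned by the $l$ cut-off constants, is shown to be invertible with spectrum below $-c\chi$; the eigenvalue problem then collapses to an $l\times l$ matrix that is diagonal to leading order with entries $\lambda_0^{(m)}$, and the technical heart is exactly this uniform coercivity estimate on the complement.
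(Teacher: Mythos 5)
Your proposal is correct in substance, but it takes a genuinely different route from the paper. You and the paper both start from the same exponential conjugation (the paper sets $\hat\psi=e^{-\chi A}\psi$, which is your substitution read backwards) to make the linearized operator self-adjoint in $L^2(\Omega;e^{\chi A}dx)$; after that the methods diverge. You argue variationally: max--min characterization of the top eigenvalue, rescaling to an Ornstein--Uhlenbeck quadratic form near each spike, the Gaussian spectral gap to push all non-constant inner modes down by $O(\chi)$, an explicit cut-off-constant test function for instability, and an IMS-type partition of unity plus Agmon-type Gaussian tail coercivity for the stability upper bound. The paper instead runs a Lyapunov--Schmidt-flavored spectral analysis: boundedness of $\lambda$, a compactness argument combined with the kernel classification of Proposition \ref{proposition21} to show eigenfunctions converge to combinations of the $Z_m$, control of the orthogonal part via the a priori estimate of Proposition \ref{prop31}, and finally testing the eigenvalue equation against the cut-offs $\hat\eta_i$ to obtain $\lambda_i=\alpha_0 h'(c_{i,0})+O(\epsilon)$ for all $i=1,\dots,k$ (Proposition \ref{prop3}), concluding from the signs of $h'$ (Proposition \ref{prop4}). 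The two routes land on the identical scalar criterion: your $G_m'(c_{0,m})\big/\int_{\mathbb R^n}\rho_m\,dy$ is exactly the paper's $\alpha_0 h'(c_{i,0})$ (your normalization is in fact the more careful one), with $G_m'(c_{01})<0$, $G_m'(c_{02})>0$, and $f'(0)=-\theta<0$ handling the vanishing bumps and the outer region, so the sign analysis and the final alternatives agree. What your approach buys: instability in case (ii) is an immediate one-test-function computation, and stability in case (i) needs only standard localization tools rather than subsequence extraction and limiting problems. What the paper's approach buys: it delivers the asymptotics of all $k$ leading eigenpairs, eigenfunctions included, which is then reused verbatim for the competition systems in Section \ref{sec4}; moreover the uniform coercivity on the complement of the approximate kernel --- which you correctly identify as the technical heart left open in your sketch --- is exactly what Proposition \ref{prop31} of the already-built reduction scheme supplies, so the paper gets that step for free, whereas in your route it must be proved by hand (it is provable: the OU gap plus the quadratic growth of the Dirichlet bottom eigenvalue on Gaussian tails absorbs the $O(\delta^{-2})$ IMS error once $\chi$ is large).
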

 
 Theorem \ref{thm12} shows that single and multiple interior spikes to (\ref{ss}) can be stable in some cases.  This is an intriguing phenomenon that is not discovered while studying the reaction-diffusion-advection systems without the source term such as for the minimal Keller--Segel model.  It is well known that the classical minimal chemotaxis model only admits the linearly stable single boundary spike solution, and whatever multiple boundary spikes or interior spikes are always unstable \cite{KW2021}.  The main reason why our results are different is that the strong Allee effect term in our models can help stabilize interior spikes in some cases.  It is worthwhile to point out that the Allee effect term is not necessary for the stabilization of interior spikes, other source terms such as logistic growth can also stabilize the interior spikes. 
 
Besides investigating the single equation, Cosner and Rodriguez \cite{Nancy} also discussed the dynamics of some coupled systems.  We are further motivated by their work and analyze the effect of strategies on the survival of interacting species.  It is worthy mentioning that there exists some interesting concept called ideal free distribution (IFD) in ecology.  It was introduced by Fretwell \cite{fretwell1969} in 1969 to describe how one species distribute individuals to minimize competition and maximize fitness.  If fitness is exactly determined by the amount of given resources, the theory states that under the following assumptions:
\begin{itemize}
    \item[(i).] Individuals in the species are homogeneous and equally able to access resources;
    \item[(ii).] Individuals are free to move in the environment;
    \item[(iii).] Organisms understand how to acquire the largest amount of resources and maximize fitness,
\end{itemize}
 the arrangement of individuals exactly matches the distribution of resources in the environment.  In general, the external resources are supposed to be located at several sites and form various aggregates, then homogeneous individuals will move towards these sites and distribute themselves among these patches of resources.  A direct consequence of this theory is that the fitness of all individuals in each aggregates should be the same at equilibrium since the individuals maximized their fitness and are not able to increase it by moving.  There are plenty of movement strategies that can lead to the ideal free distribution.  One of available strategies is that individuals aggregate in each patch with the number of population being proportional to the amount of available resources.

The IFD strategy can be modelled by the following equation with the no-flux boundary condition:
\begin{align*}
\left\{\begin{array}{ll}
v_t=\nabla\cdot (\nabla v- v\nabla \ln r)+v(r-v)(v-\theta), &x\in\Omega, t>0,\\
(\nabla v- v\nabla \ln r)\cdot\textbf{n}=0,&x\in\partial \Omega,t>0,\\
v(x,0)=v_0(x),&x\in\Omega,
\end{array}
\right.
 \end{align*}
where the external resources, modelled by $r$, are fixed.  In this equation, one finds that $v=r$ is an equilibrium, which implies the distribution of the species is the same as that of resources.  The other strategy what we are interested in is named as aggressive strategy, which is species combine unbiased and strong biased dispersal with high speed.  By studying the influence of IFD strategy and aggressive strategy on the persistence of interacting species, we obtain the following results summarized in Theorem \ref{thm13}:
 \begin{theorem}\label{thm13}
Assume all conditions in Theorem \ref{thm11} hold.  Focusing on system (\ref{14}), let $A(x)=\ln (r(x))$, $g(x,u+v):=(r(x)-u-v)(u+v-\theta)$, if $\chi_1=\chi$, $\chi_2=1$ and the boundary condition is no-flux, then when $\chi\gg 1$, for $\theta$ sufficiently small, we have there only exist two types of spiky steady states to (\ref{14}):
\[(u_s,v_s)=(0,r(x));~~(u_s,v_s)=(u^*,0),\]
where $u^*$ is given by 
\begin{align}\label{ustarthm13}
 u^*(x;\chi)=\sum_{m=1}^k\bar c_{m,\chi} e^{-\frac{1}{2}\sum\limits_{i=1}^nh_m^{(i)}\chi(x-x^{(i)}_m)^2}+O\bigg(\frac{1}{\sqrt{\chi}}\bigg).
 \end{align}
In particular, if $m\in \mathcal I_b$, $\bar c_{m,\chi}=\bar c_{01}+O(\frac{1}{\sqrt{\chi}})$; if $m\in \mathcal I_s\backslash \mathcal I_b$, $\bar c_{m,\chi}=\bar c_{02}+O(\frac{1}{\sqrt{\chi}})$; if $m\not\in \mathcal I_s$, $\bar c_{m,\chi}=O(\frac{1}{\sqrt{\chi}})$, where $\bar c_{01}$ and $\bar c_{02}$ are defined in (\ref{barc0}).

Concerning the stability property of $(u_s,v_s)$, we have when $\theta\in\big(0,\theta^{**}\big)$ with $\theta^{**}:=\min\Big\{\frac{\int_{\Omega}r^3 dx}{\int_{\Omega}r^2dx},\theta_1\Big\}$ and $\theta_1$ being defined in (\ref{theta1}), $(0,r)$ is linearly stable.  Moreover, if $\theta$ is independent of $\chi,$ the following alternatives hold for $(u^*,0)$:
\begin{itemize}
\item[(i).] if $\bar c_{m,\chi}=\bar c_{01}+O\big(\frac{1}{\sqrt{\chi}}\big)$ for all $m\in \mathcal I_{s}$, the solution will be linearly stable;
\item[(ii).] if $\bar c_{m,\chi}=\bar c_{02}+O\big(\frac{1}{\sqrt{\chi}}\big)$ for some $m$, the solution will be unstable;
\end{itemize}
however, if $\theta$ depends on $\chi$ and satisfies $\theta\in (0,\varepsilon^*_1/\chi^{\frac{n}{2}})$ with $\varepsilon_1^*$ defined by (\ref{varepsilonstar}), $(u^*,0)$ will be always unstable.  
\end{theorem}
We further investigate the case that two species both follow the aggressive strategy but one of them has higher speed, and then we have 
\begin{theorem}\label{thm14}
Assume $g(x,u+v):=(1-u-v)(u+v-\theta)$, $\chi_1=\chi$, $\chi_2=c\chi$ with $c>1$ and the boundary condition is no-flux in system (\ref{14}).  Moreover, suppose all conditions of $A(x)$ shown in Theorem \ref{thm11} hold.  Let $k$ be any positive but fixed integer, $\mathcal I_{s1}, \mathcal I_{s2}\subseteq \{1,\cdots,k\}$ be any subset and $\mathcal I_{b1}$, $\mathcal I_{b2}$ be any subset of $\mathcal I_{s1}$ and $\mathcal I_{s2}$, respectively.  Suppose at least one of subsets $\mathcal I_{s1}$ and $\mathcal I_{s2}$ is nonempty, then if $\mathcal I_{s1}\cap\mathcal I_{s2}=\emptyset$, we have when $\chi\gg 1$ and $\theta\in(0,\theta_1)$ with $\theta_1$ being defined in (\ref{theta1}), there exist the steady states $(\bar u^*,\bar v^*)$ defined as
\begin{align}\label{firstcoexistthm13}
\left\{\begin{array}{ll}
\bar u^*(x;\chi)=\sum\limits_{m=1}^k S^{(1)}_{m,\chi} e^{-\frac{1}{2}\sum\limits_{i=1}^nh_m^{(i)}\chi(x-x^{(i)}_m)^2}+o(1);\\
 \bar v^*(x;\chi)=\sum\limits_{m=1}^k S^{(2)}_{m,\chi} e^{-\frac{c}{2}\sum\limits_{i=1}^nh_m^{(i)}\chi(x-x^{(i)}_m)^2}+o(1),
 \end{array}
 \right.
 \end{align}
 where $o(1)\rightarrow 0$ uniformly as $\chi \rightarrow \infty$.  In particular, if $m\in\mathcal I_{bi},$ $S^{(i)}_{m,\chi}=c_{01}+O(\frac{1}{\sqrt{\chi}})$; if $m\in \mathcal I_{si}\backslash\mathcal I_{bi}$, $S^{(i)}_{m,\chi}=c_{02}+O(\frac{1}{\sqrt{\chi}})$; if $m\not\in\mathcal I_{si},$ $S^{(i)}_{m,\chi}=O(\frac{1}{\sqrt{\chi}})$, where $i=1, 2$ and $c_{01},$ $c_{02}$ are defined in (\ref{c0}).  Moreover, if $\mathcal I_{s1}\cap \mathcal I_{s2}\not=\emptyset$, (\ref{14}) admits many possible non-constant spiky solution $(\hat u^*,\hat v^*)$, which are given by 
 \begin{align}\label{secondcoexistthm13}
 \left\{\begin{array}{ll}
 \hat u^*(x;\chi)=\sum\limits_{m=1}^k \bar S^{(1)}_{m,\chi} e^{-\frac{1}{2}\sum\limits_{i=1}^nh_m^{(i)}\chi(x-x^{(i)}_m)^2}+o(1);\\
 \hat v^*(x;\chi)=\sum\limits_{m=1}^k \bar S^{(2)}_{m,\chi} e^{-\frac{c}{2}\sum\limits_{i=1}^nh_m^{(i)}\chi(x-x^{(i)}_m)^2}+o(1),
 \end{array}
 \right.
 \end{align}
 where $o(1)\rightarrow 0$ uniformly as $\chi \rightarrow \infty$.  In particular, if $m\in \mathcal I_{s1}\cap \mathcal I_{s2},$ $\bar S^{(i)}_{m,\chi}= S_i^*+O(\frac{1}{\sqrt{\chi}})$; if $m\in \mathcal I_{si}\backslash\{\mathcal I_{s1}\cap \mathcal I_{s2}\},$ $\bar S^{(i)}_{m,\chi}=c_{0i}+O(\frac{1}{\sqrt{\chi}})$; if $m\not\in\mathcal I_{si},$ $\bar S^{(i)}_{m,\chi}=O(\frac{1}{\sqrt{\chi}})$, where $i=1, 2$ and $S_{1}^*,$ $S_{2}^*$ are defined as the solution to
 \begin{align*}
\left\{\begin{array}{ll}
I_1(S_1,S_2)=0,\\
I_2(S_1,S_2)=0,
\end{array}
\right.
\end{align*}
where $I_1$ and $I_2$ are given by
$$I_1:=-3^{-\frac{n}{2}}S_1^2-2(c+2)^{-\frac{n}{2}}S_1S_2-(2c+1)^{-\frac{n}{2}}S_2^2+2^{-\frac{n}{2}}(1+\theta)S_1+(c+1)^{-\frac{n}{2}}(1+\theta)S_2-\theta$$
and
$$I_2:=-(c+2)^{-\frac{n}{2}}S_1^2-2(2c+1)^{-\frac{n}{2}}S_1S_2-(3c)^{-\frac{n}{2}}S_2^2+(c+1)^{-\frac{n}{2}}(1+\theta)S_1+(2c)^{-\frac{n}{2}}(1+\theta)S_2-c^{-\frac{n}{2}}\theta.$$

Regarding the stability of the steady states $(\bar u^*,\bar v^*)$, we have that if there exists $m$ such that $S^{(i)}_{m,\chi}=c_{02}+O(\frac{1}{\sqrt{\chi}})$, $i=1$ or $2$, the spiky solutions will be unstable; otherwise they will be linear stable.  However, there exist $c^*$ and $\tilde \theta^*$ such that for $c\in(1,c^*)$ and $\theta\in(0,\tilde \theta^*),$ the possible coexistence steady states $(\hat u^*,\hat v^*)$ are always unstable. 
\end{theorem}
 
Before clarifying the results obtained in Theorem \ref{thm13} and Theorem \ref{thm14}, we introduce the concept of neighborhood invader strategies.  In ecology, if one population taking some strategy is able to invade any other established population whose strategies are sufficiently similar to it, we define this strategy as a neighborhood invader strategy \cite{Nancy}.  The stability results in Theorem \ref{thm13} imply when Allee threshold $\theta$ depending on $\chi$ is small, the IFD strategy is a neighborhood invader strategy and the high speed does not always help the species to persist   since $(u^*,0)$ is unstable with $u^*$ being given by (\ref{ustarthm13}).  In addition, Theorem \ref{thm13} and Theorem \ref{thm14} demonstrate that interacting species who are concentrated at some patch of resources tend to occupy all resources rather than share with each other.  It is necessary to emphasize that the non-coexistence of competing species is nuanced in Theorem \ref{thm13} and Theorem \ref{thm14}.  Theorem \ref{thm13} shows that for Allee threshold sufficiently small, there is not any coexistence equilibrium when the competition is between the species who follows the aggressive strategy and the one who follows the IFD strategy.  Whereas Theorem \ref{thm14} illustrates that when the interacting species both use aggressive strategies, although there possibly exist the coexistence equilibria $(\hat u^*,\hat v^*)$ defined by (\ref{secondcoexistthm13}), which possess some local peaks occupied by both two species, they are always unstable.  Theorem \ref{thm14} also shows that there exist the spiky steady states $(\bar u^*,\bar v^*)$ given by (\ref{firstcoexistthm13}) and each peak of them is occupied by only one of interacting species.  The stability of these steady states still depends on the heights of local peaks. 
 
The remaining part of this paper is organized as follows.  In section \ref{sec2}, we utilize the finite dimensional Lyapunov-Schmidt reduction method to derive the stationary solutions (\ref{u0}) rigorously.  Our argument is divided into four steps.  The first one is to construct a good ansatz of $u$.  Next, we study the linearized problem to establish a priori estimate of the error term.  The last two steps concern the nonlinear projected problem and the reduced problem.  In section \ref{sec3}, we perform a theoretical analysis of linearized eigenvalue problem around (\ref{u0}).  We next qualitatively study the interaction of competing species modelled by the coupled systems in Section \ref{sec4}.  Section \ref{sec5} is devoted to some numerical simulations of the pattern formation within (\ref{origin}) and (\ref{14}), where the dynamics of single and some multiple interior spikes are presented.  
 \section{Construction of Interior Spikes}\label{sec2}
In section \ref{sec2}, we proceed to prove the existence of spiky solutions to system (\ref{ss}).  The study of boundary-layer and interior spikes arising from reaction-diffusion systems can be traced back to \cite{LNT,NT1,NT2}.  These pioneering researches have attracted intensive studies in the last two decades, see the references \cite{Gui1999,Gui2000} and therein.  For the Lyapunov-Schmidt reduction method, we refer to \cite{Wei19981,Wei1998} and the lecture notes \cite{delpinowei}.  There are rich applications of this reduction method in various reaction-diffusion system (see the book \cite{WeiWinterBook}).   To help readers understand our argument comprehensively, we shall present our main idea for the selection of the ansatz in the following subsection.
\subsection{Approximate Solution of $u$}
Recall the asymptotic form of $A$:
 \begin{align}\label{recall}
A=A_m-\frac{1}{2}\sum\limits_{i=1}^n h_m^{(i)}\big( x^{(i)}- x_m^{(i)}\big)^2+o\big(\vert x-x_m\vert^2\big),
\end{align}
where $x_m$, $m=1,2,\cdots,k$ are  non-degenerate local maximum points and $h_m^{(i)}>0$ for any $i=1,\cdots,n$.  In addition, when $d_1=1$, the stationary problem (\ref{ss}) can be rewritten as
\begin{align}\label{ss1}
\left\{\begin{array}{ll}
\nabla\cdot(e^{\chi(A-A_m)}\nabla(e^{-\chi(A-A_m)}u))+\mu u(1-u)(u-\theta)=0,&x\in\Omega,\\
\nabla(e^{-\chi(A-A_m)}u)\cdot \textbf{n}=0,&x\in\partial\Omega.
\end{array}
\right.
\end{align}
Letting $\chi:=\frac{1}{\epsilon^2}$, $y:=\frac{x}{\epsilon}$ and $u(x):=U(y)$, the $u$-equation in (\ref{ss1}) becomes
\begin{align}\label{21}
\nabla_y\cdot\bigg(e^{\frac{A-A_m}{\epsilon^2}}\nabla_y\Big(e^{-\frac{A-A_m}{\epsilon^2}}U\Big)\bigg)+\mu\epsilon^2 f(U)=0,\quad y\in \Omega_{\epsilon},
\end{align}
where $\Omega_{\epsilon}:=\{y\in \mathbb R^n|\epsilon y\in \Omega\}.$  In light of (\ref{recall}), $e^{\chi(A-A_m)}$ has the following form near $x_m$:
\[e^{\chi(A-A_m)}= e^{-\frac{1}{2}\sum\limits_{i=1}^nh_m^{(i)}\chi(x^{(i)}-x_m^{(i)})^2+o(\chi\vert x-x_m\vert^2)}.\]
Let the $m$-th centre of the interior spike be $P_m:=\frac{x_m}{\epsilon}$ and $a_m(y):=-\frac{1}{2}\sum\limits_{i=1}^nh_m^{(i)}\frac{(x^{(i)}-x_m^{(i)})^2}{\epsilon^2}=-\frac{1}{2}\sum\limits_{i=1}^nh_m^{(i)}(y^{(i)}-P_m^{(i)})^2$. One can rewrite $e^{\chi(A-A_m)}$ to obtain
\begin{align}\label{22}
e^{\chi(A-A_m)}=e^{a_m(y)}+o(\vert y\vert^2e^{a_m(y)}).
\end{align}
Substituting (\ref{22}) into (\ref{21}), we find
\begin{align}\label{23}
\nabla_y\cdot\bigg[\Big(e^{a_m(y)}+o\Big(\vert y\vert^2e^{a_m(y)}\Big)\Big)\nabla_y\Big[\Big(e^{-a_m(y)}+o\Big(\vert y\vert^2e^{a_m(y)}\Big)\Big)U\Big]\bigg]+\mu\epsilon^2 f(U)=0,\quad y\in \Omega_{\epsilon},
\end{align}
The limiting equation of (\ref{23}) becomes the following equation
\begin{align}\label{limiting}
\nabla_y\cdot(e^{a_m(y)}\nabla_y(e^{-a_m(y)}U))=0,\quad y\in \mathbb R^n.
\end{align}
We can see that there exists a non-constant solution $U_m(y)=c_me^{a_m(y)}$ to system (\ref{limiting}), where $c_m>0$ is constant.  Note that  our argument holds for every non-degenerate local maximum point $x_m$. So it is natural to consider the linear combination of $U_m$ as the approximate solution to (\ref{ss}).  To achieve our idea, we shall extract all local regions that only includes one of $x_m$ from $\Omega$, then truncate $U_m$ in every region and linearly combine them eventually.    

Under the hypothesis (H1), we first have the fact that there exist exactly $k$ mutually disjoint $\Omega_m:=\{x\in \Omega\vert \Delta A< \delta\}$ such that $x_m\in\Omega_m$.  Next, the following cut-off functions $\eta_m\in C^\infty_c(\Omega)$ are introduced:
\begin{align*}
\eta_m:=\left\{\begin{array}{ll}
1,&x\in\Omega_m,\\
0,&x\in \Omega\backslash B_{R_m}(0),
\end{array}
\right.
\end{align*}
where $\Omega_m\subset B_{R_m}(0)$ and $B_{R_m}(0)$ are mutually disjoint.  Now, we give the ansatz of $u$ which has the following form:
\begin{align}\label{ansatz}
u_0=\sum_{m=1}^k c_m e^{\chi(A-A_m)}\eta_m,
\end{align}
where $c_m>0$ need to be determined.  Define $u_0(x):=U_0(y)$ and $\phi(y):=U(y)-U_0(y)$. Next we discuss the property of the error term $\phi(y)$ in the following subsection. 
\subsection{Linearized Projected Problem}
After deriving the approximate solution of $u$, we are going to establish and analyze the linearized problem satisfied by $\phi$ in order to prove its existence rigorously.  It is necessary to introduce the following Hilbert space:
\[H_{N_*}^2(\Omega_{\epsilon})=\bigg\{U\in H^2(\Omega_{\epsilon})\Big\vert \bigg(\nabla_y U-\frac{1}{\epsilon^2} U\nabla_y A\bigg)\cdot \textbf{n}=0\text{~on~}\partial\Omega_{\epsilon}\bigg\}.\]
Then, define
\begin{align}\label{operator}
S_{\epsilon}(U)=\nabla_y\cdot\Big(\nabla_y U-\frac{1}{\epsilon^2} U\nabla_y A\Big)+\epsilon^2 f(U)
\end{align}
for $U\in H^2_{N_*}(\Omega_{\epsilon}).$  We have the fact that solving (\ref{ss}) is equivalent to find the solution of
$$S_{\epsilon}(U)=0\text{~~for some}~~ U\in H^2_{N_*}(\Omega_{\epsilon}).$$
The linearized operator of (\ref{operator}) is defined by
\begin{align}\label{29}
L_{\epsilon}(\phi):=\nabla_y\cdot\Big(\nabla_y \phi-\frac{1}{\epsilon^2}\phi\nabla_yA\Big).
\end{align}
We denote $Z_m$ for $m=1,\cdots,k$ as
\begin{align}\label{kernel}
Z_m=e^{-\frac{1}{2}\sum\limits_{i=1}^n h_m^{(i)}(y^{(i)}-P_m^{(i)})^2}=e^{a_m(y)}.
\end{align}
Before stating that $Z_m$ are the approximate kernels of $L_{\epsilon}$, one needs to analyze the property of the operator $L_{\epsilon}(\phi)$ near every $x_m$ for $\epsilon$ small.  In fact, define $y_m:=\frac{ x-x_m}{\epsilon}$ and
\[L_{0}^m(\phi):=\nabla_{y_m}\cdot\Big(e^{a_m(y_m)}\nabla_{y_m}(e^{-a_m(y_m)}\phi)\Big),\]
then we have the following classification result on the kernel of $L_0^m(\phi)$:
\begin{proposition}\label{proposition21}
For each $m=1,2,\cdots,k$, the bounded solution to the following problem
\begin{align}\label{limitingoperator}
\left\{\begin{array}{ll}
L_0^m(\phi)=0,&y_m\in \mathbb R^n,\\
\phi\in H^2_0(\mathbb R^n), &\vert \phi\vert=O(1)e^{-\sigma\vert y_m\vert^2}
\end{array}
\right.
\end{align}
is unique and given by (\ref{kernel}), where $\sigma>0$ is a small constant.
\end{proposition}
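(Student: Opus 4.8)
The plan is to reduce the degenerate-looking problem (\ref{limitingoperator}) to the classical Ornstein--Uhlenbeck equation and then to prove a Liouville theorem for the latter in the class of functions with subcritical Gaussian growth. First I would keep the operator in its divergence form and strip off the Gaussian weight by the substitution $w:=e^{-a_m}\phi=Z_m^{-1}\phi$. Since $L_0^m(\phi)=\nabla_{y_m}\cdot\big(e^{a_m}\nabla_{y_m}(e^{-a_m}\phi)\big)$, the equation $L_0^m(\phi)=0$ becomes
\begin{align*}
\nabla_{y_m}\cdot\big(Z_m\,\nabla_{y_m}w\big)=0,\qquad\text{equivalently}\qquad \Delta_{y_m} w-\sum_{i=1}^n h_m^{(i)}\,y_m^{(i)}\,\partial_{i}w=0,
\end{align*}
which is exactly the stationary equation for the generator of the Ornstein--Uhlenbeck diffusion whose invariant density is proportional to $Z_m=e^{a_m}$. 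In these variables the assertion $\phi=c\,Z_m$ is equivalent to saying that $w$ is constant, so this is what I aim to prove.

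Next I would convert the decay hypothesis into a growth bound on $w$. From $|\phi|=O(1)e^{-\sigma|y_m|^2}$ one gets $|w|=e^{-a_m}|\phi|\le C\exp\big(\sum_{i=1}^n(\tfrac12 h_m^{(i)}-\sigma)(y_m^{(i)})^2\big)$; because $\sigma>0$, the exponent is \emph{strictly} below the critical rate $\tfrac12\sum_i h_m^{(i)}(y_m^{(i)})^2$ in every coordinate. This strict subcriticality is the entire purpose of assuming Gaussian decay. The only competing behaviour is the second fundamental solution, which in one dimension (with $h=h_m^{(i)}$) is $\int_0^{s}e^{h t^2/2}\,dt\sim e^{hs^2/2}/(hs)$, and it produces a $\phi$ that decays merely like $1/|s|$; such polynomial decay is incompatible with any Gaussian bound $e^{-\sigma s^2}$, so an arbitrarily small $\sigma>0$ already rules it out.

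To run this rigorously in $n$ dimensions I would use the semigroup generated by the operator. Elliptic regularity first renders $w$ smooth. The equation says $w$ is harmonic for the Ornstein--Uhlenbeck generator, hence invariant under its semigroup $P_t$, i.e. $P_tw\equiv w$; the subcritical growth bound guarantees that $P_tw(y_m)=\mathbb E\big[w(Y^{y_m}_t)\big]$ is finite for every $t$ (Mehler's kernel at time $t$ is Gaussian with per-coordinate variance at most $1/h_m^{(i)}$, so the integral converges precisely because the growth exponent is $<\tfrac12 h_m^{(i)}$) and that the family is uniformly integrable. Letting $t\to\infty$, the process converges to its Gaussian stationary law $\gamma$ with density $\propto Z_m$, whence $P_tw(y_m)\to\int w\,d\gamma$, a constant independent of $y_m$. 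Therefore $w\equiv\mathrm{const}$, i.e. $\phi=c\,Z_m$, which yields both existence (the displayed $Z_m$ solves (\ref{limitingoperator})) and uniqueness up to scalar multiples.

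The main obstacle is exactly this Liouville step. The natural functional-analytic shortcut — diagonalising the self-adjoint Ornstein--Uhlenbeck operator in the Gaussian space $L^2(Z_m\,dy_m)$, where the kernel is one-dimensional and spanned by constants — is \emph{not} directly available, because for small $\sigma$ the function $w$ need not belong to $L^2(Z_m\,dy_m)$; for the same reason the cut-off energy estimate $\int Z_m|\nabla w|^2\zeta^2\le 4\int Z_m w^2|\nabla\zeta|^2$ fails, since its right-hand side need not vanish as the cut-off radius grows. One therefore really must control solutions up to the critical Gaussian growth, which is what the semigroup/ergodicity argument (or, alternatively, a coordinate-by-coordinate Hermite expansion combined with the one-dimensional ODE analysis above) is designed to accomplish.
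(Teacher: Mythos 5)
Your proposal is correct in outline, but it establishes the key Liouville property by a genuinely different mechanism than the paper. The first step is identical: the paper also sets $\hat\phi:=e^{-a_m}\phi$ (your $w$) and reduces to $\nabla_{y_m}\cdot\big(e^{a_m}\nabla_{y_m}\hat\phi\big)=0$ with strictly subcritical Gaussian growth. From there, however, the paper stays purely variational: it tests the equation against $(\hat\phi_+)^N\hat\eta^2$, where $\hat\eta$ is a cut-off supported in $B_{2R}(0)$ and $N>0$ is a free power, obtaining the Caccioppoli-type bound
\begin{align*}
\int_{B_{2R}(0)}e^{a_m}\big\vert\nabla\big((\hat\phi_+)^{\frac{N+1}{2}}\big)\big\vert^2dy_m\leq C\int_{B_{2R}(0)}\vert\hat\phi_+\vert^{N+1}\big\vert\nabla\cdot\big(e^{a_m}\nabla\hat\eta^2\big)\big\vert dy_m,
\end{align*}
whose right-hand integrand is of order $e^{-Na_m}e^{-\sigma(N+1)\vert y_m\vert^2}$ on the annulus $R\leq\vert y_m\vert\leq 2R$; choosing $N$ small enough relative to $\sigma$ (so that $N h_{\max}/2<\sigma(N+1)$) makes the right side vanish as $R\to\infty$, whence $\hat\phi_\pm$ are constant and $\phi=cZ_m$. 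This is exactly where your criticism of the energy method overshoots: the quadratic ($N=1$) cut-off inequality you display indeed fails when $\sigma$ is small, but the paper's small-power variant repairs it, so one is \emph{not} forced to the semigroup/ergodicity route. Your route --- Ornstein--Uhlenbeck generator, Dynkin's formula plus uniform integrability to get $P_tw=w$, then $t\to\infty$ and convergence to the invariant Gaussian --- is nonetheless a valid alternative, provided the uniform integrability is substantiated both in the localization $R\to\infty$ and uniformly in $t$ (e.g. via the supermartingale $e^{-cs}\exp\big(\sum_i\beta_i(Y^{(i)}_s)^2\big)$ with $\beta_i<h_i/2$, and via Mehler's formula, whose per-coordinate variances stay below $1/h_i$ so the relevant moments are bounded uniformly in $t$). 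What your approach buys is a conceptual explanation --- the Liouville property is literally the ergodicity of the OU process --- valid for the entire subcritical growth class; what it costs is the stochastic (or Mehler-kernel) machinery, whereas the paper's argument is elementary, self-contained, and amounts to two integrations by parts plus a judicious choice of exponent.
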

\begin{proof}
Define $\hat \phi(y_m):=e^{-a_m(y_m)}\phi$.  The $\phi-$equation becomes
\begin{align}\label{211}
\nabla\cdot\Big(e^{a_m(y_m)}\nabla \hat\phi\Big)=0,~~~y_m\in \mathbb R^n.
\end{align}
Define $\hat\phi_{+}$ and $\hat\eta$ as
\begin{align*}
\hat\phi_{+}=\left\{\begin{array}{ll}
\hat\phi,  \hat\phi>0,\\
0, \hat\phi\leq 0,
\end{array}
\right.\text{~~~and~~~}\hat \eta=\left\{\begin{array}{ll}
1,&y_m\in B_{R}(0),\\
0,&y_m\in \mathbb R^n\backslash B_{2 R}(0),
\end{array}
\right.
\end{align*}
where $R>0$ is a constant.  Multiply (\ref{211}) by $ (\hat\phi_+)^{N}\hat\eta^2$ with $N$ being determined later on, then integrate it over $\mathbb R^n$ to get
\begin{align*}
0=&\int_{\mathbb R^n}\nabla\cdot(e^{a_m}\nabla \hat \phi)(\hat\phi_+)^{N}\hat\eta^2dy_m=-\int_{\mathbb R^n}(e^{a_m}\nabla \hat \phi)\cdot\nabla((\hat\phi_+)^{N}\hat\eta^2)dy_m,
\end{align*}
which implies
\begin{align*}
0=&-\int_{\mathbb R^n}e^{a_m}\nabla \hat \phi \cdot \nabla ((\hat \phi_{+})^N)\hat\eta^2dy_m-\int_{\mathbb R^n}(e^{a_m}\nabla \hat \phi)\cdot(\hat\phi_+)^{N}\nabla(\hat\eta^2)dy_m;
\end{align*}
moreover, since the support of $\hat\eta$ is $B_{2R}(0)$, one has
\begin{align*}
\int_{B_{2R}(0)}e^{a_m}\nabla \hat \phi \cdot \nabla ((\hat \phi_{+})^N)\hat\eta^2dy_m=-\int_{\mathbb R^n}(e^{a_m}\nabla \hat \phi)\cdot(\hat\phi_+)^{N}\nabla(\hat\eta^2)dy_m.
\end{align*}
By using integration by parts, we can obtain
\begin{align*}
\frac{4N}{(N+1)^2}\int_{B_{2R}(0)}e^{a_m}\big\vert\nabla \big((\hat\phi_+)^{\frac{N+1}{2}}\big)\big\vert^2dy_m=\frac{1}{N+1}\int_{B_{2R}(0)}(\hat\phi_+)^{N+1}\nabla\cdot(e^{a_m}\nabla\hat\eta^2)dy_m.
\end{align*}
Therefore,
\begin{align}\label{212}
\int_{B_{2R}(0)}e^{a_m}\big\vert\nabla \big((\hat\phi_+)^{\frac{N+1}{2}}\big)\big\vert^2dy_m\leq C\int_{B_{2R}(0)}\vert\hat\phi_{+}\vert^{N+1}\cdot \vert\nabla\cdot(e^{a_m}\nabla\hat\eta^2)\vert dy_m,
\end{align}
where $C>0$ is some large constant.  In light of $\vert \hat\phi\vert\leq C_1 e^{-a_m(y_m)} e^{-\sigma\vert y_m\vert^2}$ for small $\sigma>0$ and some constant $C_1>0$, one finds from (\ref{212}) that there exists some constant $C_2>0$ such that
\begin{align}\label{referee21}
\int_{B_{2R}(0)}e^{a_m}\big\vert\nabla_{y_m} \big((\hat\phi^+_m)^{\frac{N+1}{2}}\big)\big\vert^2dy_m\leq C_2(2R)^n\Big(e^{-Na_m(y_m)}\vert_{\vert y_m\vert=2R}\Big)e^{-\sigma(N+1)( 2R)^2}.
\end{align}
Recall that $a_m(y_m):=-\frac{1}{2}\sum\limits_{i=1}^nh_m^{(i)}[y^{(i)}_m]^2,$ then we have 
\begin{align*}
\Big(e^{-Na_m(y_m)}\vert_{y_m=2R}\Big)\leq e^{C_3N\vert y_m\vert^2}\vert_{\vert y_m\vert=2R}\leq e^{C_3N(2R)^2},
\end{align*}
where $C_3>0$ is some constant only depending on the maximum of $h_m^{(i)},$ $i=1,\cdots,n$.  We choose $N=\frac{\sigma}{C_3}$ then obtain from \eqref{referee21} that
\begin{align*}
\int_{B_{2R}(0)}e^{a_m}\big\vert\nabla_{y_m} \big((\hat\phi^+_m)^{\frac{N+1}{2}}\big)\big\vert^2dy_m\leq C_2(2R)^ne^{-\sigma N( 2R)^2}=C_2(2R)^ne^{-\frac{\sigma^2}{C_3} ( 2R)^2}\rightarrow 0\text{~as~}R\rightarrow\infty.
\end{align*}
where $\sigma$ can be chosen small enough.  Thus, we have $\hat \phi_+\equiv C_4$ for $y_m\in\mathbb R^n$, where $C_4>0$ is a constant. It is similar to multiply (\ref{211}) by $\hat \phi_{-}\hat\eta^2$ to obtain $\hat\phi_{-}\equiv C_5$ for some constant $C_5>0$.  Therefore, we prove there exists a unique bounded solution to (\ref{limitingoperator}) for each $m$.
\end{proof}
Proposition \ref{proposition21} implies that when $\epsilon$ is small, the null space of operator $L_{\epsilon}(\phi)$ can be expressed as $\text{span}\{Z_1,\cdots,Z_k\}$, where $Z_m$, $m=1,\cdots,k$ are given by (\ref{kernel}).  We would like to mention that Lam \cite{lam2012} obtained the weaker version of Proposition \ref{proposition21} in Appendix.  Now, we establish the linearized projected problem satisfied by $\phi$:
\begin{align}\label{project}
\left\{\begin{array}{ll}
L_{\epsilon}(\phi)=\nabla_y\cdot\big(\nabla_y \phi-\frac{1}{\epsilon^2}\phi\nabla_yA\big)=g-\sum\limits_{m=1}^kd_mZ_m,&y\in\Omega_{\epsilon},\\
\int_{\Omega_{\epsilon}}\phi Z_mdy=0,&m=1,2,\cdots,k,\\
\phi\in H^2_{N_*}(\Omega_{\epsilon}).
\end{array}
\right.
\end{align}
Given $\sigma_1\in(0,1)$, consider the $*$-norm
\[\Vert g\Vert_{*}=\sup_{y\in\Omega_{\epsilon}}\bigg\vert\bigg(\sum\limits_{m=1}^ke^{\sigma_1 a_m(y)}\bigg)^{-1} g(y)\bigg\vert,\]
then we shall estimate $\phi$ and prove its existence, and the result is summarized as the following proposition:  

\begin{proposition}\label{proposition22}
Assume $g\in L^2(\Omega_{\epsilon})$ and $\Vert g\Vert_{*}$ is bounded.  Then there exist positive constant $\sigma_1\in (0,1)$, $\epsilon_0$ and $C$ such that for any $\epsilon\in(0,\epsilon_0]$, (\ref{project}) admits the unique solution $(\phi,\{d_m\})$.  Moreover, $\phi$ satisfies the following a priori estimate: 
\begin{align}\label{phiestimate}
\Vert \phi\Vert_{*}\leq C\Vert g\Vert_{*}.
\end{align}
\end{proposition}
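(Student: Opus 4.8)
The plan is to establish the a priori estimate (\ref{phiestimate}) first; uniqueness is then immediate, and existence will follow from a Fredholm argument. The core is to prove (\ref{phiestimate}) by contradiction through a blow-up analysis that invokes the kernel classification of Proposition \ref{proposition21}.

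Suppose (\ref{phiestimate}) fails: there are sequences $\epsilon_n\to 0$, data $g_n$ and solutions $(\phi_n,\{d_{m,n}\})$ of (\ref{project}) with $\Vert\phi_n\Vert_*=1$ while $\Vert g_n\Vert_*\to 0$. I would first show the multipliers are bounded. Pairing the equation $L_{\epsilon_n}(\phi_n)=g_n-\sum_m d_{m,n}Z_m$ with each $Z_j$ and integrating by parts twice (the no-flux boundary condition kills the first boundary term, while the Gaussian decay of $Z_j$ makes the second exponentially small), and using the pointwise bound $|\phi_n(y)|\le C\sum_l e^{\sigma_1 a_l(y)}$ that follows from $\Vert\phi_n\Vert_*=1$, one obtains a linear system for $\{d_{m,n}\}$ whose coefficient matrix $\big(\int_{\Omega_{\epsilon_n}}Z_mZ_j\,dy\big)$ is diagonally dominant, hence invertible, because the centres $P_m=x_m/\epsilon_n$ are mutually far apart. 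Its right-hand side is bounded by $C(\Vert g_n\Vert_*+\Vert\phi_n\Vert_*)$, so $\{d_{m,n}\}$ is bounded and, along a subsequence, $d_{m,n}\to d_m^\infty$. Next I rescale about each centre by $y_m=y-P_m$. Since $L_{\epsilon_n}$ converges to the limiting operator $L_0^m$ near $x_m$ and the translates are locally uniformly bounded, interior elliptic estimates give $\phi_n(\cdot+P_m)\to\phi_\infty^m$ in $C^1_{\mathrm{loc}}(\mathbb R^n)$, where $\phi_\infty^m$ solves $L_0^m(\phi_\infty^m)=-d_m^\infty Z_m$ (near $x_m$ one has $g_n\to 0$ and only the $Z_m$ term among the multipliers survives) and inherits a Gaussian decay of the type required in Proposition \ref{proposition21}. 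Integrating this limiting identity over $\mathbb R^n$ and using the divergence structure (the left side vanishes for decaying solutions while $\int Z_m\,dy>0$) forces $d_m^\infty=0$; hence $L_0^m(\phi_\infty^m)=0$, so $\phi_\infty^m=\lambda_m Z_m$ by Proposition \ref{proposition21}, and passing the orthogonality $\int_{\Omega_{\epsilon_n}}\phi_n Z_m\,dy=0$ to the limit gives $\lambda_m=0$. Thus $\phi_n\to 0$ uniformly on each fixed inner neighbourhood of $x_m$.

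The main obstacle is the outer region, that is, control of $\phi_n$ away from the centres, where the weight $e^{\chi A}$ is not concentrated and the limiting problem degenerates. Here I would construct a comparison/barrier function adapted to the $*$-norm and apply the maximum principle to $L_{\epsilon_n}$ on $\Omega_{\epsilon_n}\setminus\bigcup_m B_R(P_m)$. Hypotheses (H1) and (H2) enter decisively: at every non-maximal critical point $\Delta A>0$, which gives the operator $\nabla_y\cdot\big(e^{\chi A}\nabla_y(e^{-\chi A}\cdot)\big)$ a favourable sign and rules out spurious interior concentration, while $\frac{\partial A}{\partial\textbf{n}}<0$ on $\partial\Omega$ produces a boundary term of the correct sign, preventing accumulation at $\partial\Omega_{\epsilon_n}$. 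Combined with the smallness of $\phi_n$ already obtained on the inner boundaries $\partial B_R(P_m)$, this shows $\big(\sum_m e^{\sigma_1 a_m}\big)^{-1}\phi_n\to 0$ uniformly on the outer region as well. Together with the inner estimate this yields $\Vert\phi_n\Vert_*\to 0$, contradicting $\Vert\phi_n\Vert_*=1$ and proving (\ref{phiestimate}).

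Finally, uniqueness follows at once from (\ref{phiestimate}). For existence I would solve (\ref{project}) in the $L^2$-based space $H^2_{N_*}(\Omega_\epsilon)$, using that $L_\epsilon(\phi)=\nabla_y\cdot\big(e^{\chi A}\nabla_y(e^{-\chi A}\phi)\big)$ with $\chi=1/\epsilon^2$ is self-adjoint with respect to the weighted inner product $\langle\phi,\psi\rangle=\int_{\Omega_\epsilon}\phi\psi\,e^{-\chi A}\,dy$. Consequently the projected map $(\phi,\{d_m\})\mapsto\big(L_\epsilon\phi+\sum_m d_mZ_m,\,\{\int_{\Omega_\epsilon}\phi Z_m\,dy\}\big)$ is Fredholm of index zero, so the injectivity supplied by the a priori estimate yields surjectivity, hence a unique solution $(\phi,\{d_m\})$ for each small $\epsilon$; elliptic regularity places $\phi$ in the $*$-norm class, where (\ref{phiestimate}) gives the asserted bound with constant $C$. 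Choosing $\epsilon_0$ small enough that the multiplier system, the blow-up limits and the barrier estimate hold uniformly completes the argument.
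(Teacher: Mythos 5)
Your proposal is correct and follows essentially the same strategy as the paper's proof: a contradiction/blow-up argument that reduces, via the kernel classification of Proposition \ref{proposition21} and the orthogonality conditions, to showing the rescaled limits vanish on the inner regions, a barrier-function/maximum-principle argument exploiting (H1)--(H2) on the outer region, and a Fredholm-alternative argument converting the resulting injectivity into existence. The only notable (but minor) variation is your treatment of the multipliers: you pair the equation with $Z_j$, obtain boundedness from diagonal dominance, and recover $d_m^\infty=0$ from the divergence structure of the limiting identity, whereas the paper tests against the cut-off functions $\hat\eta_m$ and reads off $d_m\to 0$ directly from the resulting estimate; both routes are sound.
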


\begin{proof}
We divide our proof into two steps.  First of all, one estimates the error term $\phi$ with the assumption that $(\phi,\{d_m\})$ exists.  Then we show the well-posedness of system (\ref{project}) including existence and uniqueness by invoking Fredholm alternative theorem.\\

\textbf{Step A-a priori estimates:}\\
To this end, we define $\Omega_{m_\epsilon}:=\{y\in \Omega_{\epsilon}\vert \Delta_y A<\epsilon^2\delta\}$ and $\hat \eta_m$ which has the following form:
\begin{align*}
\hat\eta_m=\left\{\begin{array}{ll}
1,&y\in \Omega_{m_\epsilon},\\
0,&y\in\Omega_{\epsilon}\backslash B_{R_{m,\epsilon}}(P_m),
\end{array}
\right.
\end{align*}
where $\Omega_{m_{\epsilon}}\subset B_{R_{m,\epsilon}}(P_m)$ for some constant $R_{m,\epsilon}>0$ depending on $\epsilon.$  Then, multiply the $\phi$-equation in (\ref{project}) by $\hat \eta_m$ and integrate it over $\Omega_{\epsilon}$ to find
\begin{align}\label{dm1}
\int_{\Omega_{\epsilon}} (L_{\epsilon}(\phi)-g)\hat\eta_mdy=-d_m\int_{\Omega_{\epsilon}} Z_m\hat \eta_mdy-\sum_{j\not= m}d_{j}\int_{\Omega_{\epsilon}}Z_j\hat\eta_mdy.
\end{align}
We simplify the left hand side to obtain
\begin{align*}
&\int_{\Omega_{\epsilon}}\Big[\nabla\cdot\Big(\nabla\phi-\frac{1}{\epsilon^2}\phi\nabla A\Big)\Big]\hat \eta_mdy-\int_{\Omega_{\epsilon}}g\hat\eta_mdy=\int_{\Omega_{\epsilon}}\nabla\cdot\Big(e^{a_m(y)}\nabla \Big(e^{-a_m(y)}\phi\Big)\Big)\hat\eta_mdy-\int_{\Omega_{\epsilon}}g\hat\eta_mdy,
\end{align*}
which, after we apply the integration by parts, becomes
\begin{align*}
\int_{\Omega_{\epsilon}}\Big[\nabla\cdot\Big(\nabla\phi-\frac{1}{\epsilon^2}\phi\nabla A\Big)\Big]\hat \eta_mdy-\int_{\Omega_{\epsilon}}g\hat\eta_mdy=&-\int_{\Omega_{\epsilon}}(e^{a_m}\nabla (e^{-a_m}\phi))\cdot\nabla\hat\eta_mdy-\int_{\Omega_{\epsilon}}g\hat\eta_mdy,\\
=&\int_{{B_{R_{m,\epsilon}}(P_m)}}\phi\Delta\hat\eta_m+\phi\nabla a_m\cdot\nabla \hat\eta_m dy-\int_{\Omega_{\epsilon}}g\hat\eta_mdy,
\end{align*}
where the last equality holds since the support of $\hat\eta_m$ is $B_{R_{m,\epsilon}}(P_m)$.  Hence, we have from (\ref{dm1}) that
\begin{align}\label{dmexpress}
d_m=\frac{\int_{B_{R_{m,\epsilon}}(P_m)}gdy-\int_{B_{R_{m,\epsilon}}(P_m)\backslash \Omega_{m_\epsilon}}(\phi\Delta\hat\eta_m+\phi\nabla a_m\cdot\nabla\hat\eta_m )dy+\sum\limits_{j\not= m}d_{j}\int_{B_{R_{m,\epsilon}}(P_m)}Z_jdy}{\int_{B_{R_{m,\epsilon}}(P_m)} Z_mdy}.
\end{align}
In addition, when $j\not=m$, we find that
\begin{align}\label{217}
\int_{B_{R_{m,\epsilon}(P_m)}}Z_jdy=\int_{B_{R_{m,\epsilon}(P_m)}}e^{-\frac{1}{2}\sum\limits_{i=1}^n h_j^{(i)}(y^{(i)}-P_j^{(i)})^2}dy\leq C e^{-\frac{1}{2}\tilde \sigma\vert P_m-P_j \vert^2},
\end{align}
where $\tilde\sigma>0$ is small and $C>0$ is a constant.  Since $\vert P_m-P_j\vert=O\big(\frac{1}{\epsilon}\big)$ for $m\not=j$, one can conclude from (\ref{dmexpress}) and (\ref{217}) that
\begin{align}\label{dmestimate}
\vert d_m\vert\leq C_1\Vert g\Vert_{*}+\epsilon^nC_1\Vert \phi\Vert_{*}+C_1\sum_{j\not=m}\vert d_j\vert e^{-\frac{1}{2}\tilde \sigma\vert P_m-P_j \vert^2}.
\end{align}
With the help of (\ref{dmestimate}), we now discuss the estimates on the ``inner" and ``outer" region of every $P_m$, $m=1,2,\cdots,k$.

First of all, we study the property of $\phi$ in the outer region $\Omega_{\epsilon}\backslash \cup_{m=1}^k B_{R}(P_m)$ where $R$ is  a large constant independent of $\epsilon$. To this end, we construct the barrier function $w=\hat\mu e^{\sigma_2\frac{\bar A}{\epsilon^2}}$, where $\hat\mu>0$ will be chosen later, $\sigma_2>0$ is a constant and
\[\bar A=\sum\limits_{m=1}^k(A-A_m)\hat\eta_m-C(1-\hat\eta_1)\cdots(1-\hat\eta_k).\]
It is straightforward to verify that when $\epsilon$ is small,
\begin{align*}
L_{\epsilon}w=&\nabla_y\cdot\bigg(\nabla_y w-\frac{1}{\epsilon^2}w\nabla_yA\bigg)\\
=&w\bigg[\frac{\sigma^2_2}{\epsilon^2}\vert\nabla_y\bar A\vert^2-\frac{\sigma_2}{\epsilon^2}\nabla_y A\cdot\nabla_y\bar A+\sigma_2\Delta_y \bar A-\Delta_y A\bigg]+O(\epsilon^2)w.
\end{align*}
Now, we claim that there exists a constant $C>0$ such that $L_{\epsilon} w\leq -Cw$ holds in $\Omega_{\epsilon}\backslash\cup_{m}B_{R}(P_m)$ for small $\epsilon$.  Indeed, if $y\in\Omega_{m_\epsilon}$ for some $m$, then there exists some constant $C_2>0$ such that $\vert y-P_m\vert>C_2,$ which implies
\[\nabla_y\cdot\bigg(\nabla_y w-\frac{1}{\epsilon^2}w\nabla_yA\bigg)\\
=w\bigg[\frac{\sigma_2}{\epsilon^2}(\sigma_2-1)\vert\nabla_y A\vert^2+(\sigma_2-1)\Delta_y A\bigg]\leq -C_3w,\]
where $C_3>0$ is a constant; if $y\not\in\Omega_{m_\epsilon}$ for all $m=1,2,\cdots,k$, then we have $\Delta A>\delta>0$. It follows that
\begin{align*}
w\bigg[\frac{\sigma^2_2}{\epsilon^2}\vert\nabla_y\bar A\vert^2-\frac{\sigma_2}{\epsilon^2}\nabla_y A\cdot\nabla_y\bar A+\sigma_2\Delta_y \bar A-\Delta_y A\bigg]\leq -C_4w,
\end{align*}
where $C_4>0$ is a constant.  We collect the above arguments and hence finish the proof of our claim.  We further define $\phi_1:=\phi-\hat Cw$ for a large constant $\hat C>0$ and obtain from the claim that
\begin{align*}
L_{\epsilon}(\phi_1)=\nabla_y\cdot \Bigg(\nabla_y\phi_1-\frac{1}{\epsilon^2}\phi_1\nabla_yA\Bigg)>0,
\end{align*}
where $y\in\Omega_{\epsilon}\backslash\cup_{m=1}^kB_{R}(P_m).$  
In addition, due to the hypothesis (H2), we have that the boundary condition of $\phi_1$ satisfies
\[\partial_{\textbf{n}}\phi_1-\frac{\phi_1}{\epsilon^2}\partial_{\textbf{n}} A=\tilde C\frac{w}{\epsilon^2}\nabla_y A\cdot\textbf{n}<0\text{~on~}\partial\Omega_{\epsilon},\]
where $\tilde C>0$ is a constant.  Moreover, one lets $\hat\mu=\sup_{m=1,\cdots,k}\Vert \phi\Vert_{L^\infty(B_{R}(P_m)\cap\Omega_{\epsilon})}+\Vert g\Vert_{*}+\sum_{m=1}^k\vert d_m\vert\cdot\Vert Z_m\Vert_{*}$ and applies the maximum principle to $\frac{\phi_1}{w}$ to get $\phi_1\leq 0$ pointwisely on $\Omega_{\epsilon}\backslash\cup_{m=1}^kB_{R}(P_m)$.  As a consequence, $\phi\leq \hat Cw$ on $\Omega_{\epsilon}\backslash\cup_{m=1}^kB_{R}(P_m)$.  Similarly, we can show $-\phi\leq \hat Cw$.  Combining the above two estimates, we have $\vert \phi\vert\leq \hat Cw$ for some constant $\hat C>0$ on $\Omega_{\epsilon}\backslash\cup_{m=1}^kB_{R}(P_m)$.


We are going to prove (\ref{phiestimate}) via the contradiction argument.  Assume that there exist a sequence $\epsilon_n\rightarrow 0$ such that the corresponding sequence $\phi_n$ and $g_n$ satisfy
\[\Vert g_n\Vert_{*}\rightarrow 0,\text{~~}\Vert\phi_n\Vert_{*}=1.\]
Moreover, one obtains from (\ref{dmestimate}) that $d_m^n\rightarrow 0$ for any $m=1,2,\cdots,k$ as $n\rightarrow \infty$.  Noting that $\Vert\phi_n\Vert_{*}$ and $\Vert g\Vert_{*}$ are bounded, we conclude $\phi_n\in C^{1,\gamma}(B_R(P_m))$ thanks to the standard elliptic estimate.  On the other hand, note that $\phi_n$ satisfies the following equation:
$$\Delta_y \phi_n-\frac{1}{\epsilon^2}\nabla_y\phi_n\cdot\nabla_y A-\frac{1}{\epsilon^2}\phi_n\Delta_y A=g_n-\sum\limits_{m=1}^kd_m^nZ_m.$$
Hence, $\vert \Delta \phi_n\vert \leq C_7$ for some constant $C_7>0$.  It follows that we can extract a sub-sequence $\phi_{n_j}\rightarrow \phi_{\infty}$ as $j\rightarrow \infty$ such that the limit $\phi_{\infty}$ is a solution of the following equation:
\begin{align*}
\left\{\begin{array}{ll}
\nabla_{y_m}\cdot\Big(e^{a_m(y_m)}\nabla_{y_m}(e^{-a_m(y_m)}\phi_{\infty})\Big)=0,&y_m\in\mathbb R^n,\\
\int_{\mathbb R^n}\phi_{\infty} Z_mdy=0,&m=1,2,\cdots,k,\\
\end{array}
\right.
\end{align*}
where $y_m=\frac{x-x_m}{\epsilon}$.  Moreover, by invoking Proposition \ref{proposition21}, we can show $\phi_{\infty}\in \text{span}\{Z_1,Z_2,\cdots,Z_k\}$ and satisfies the orthogonality condition, which imply $\phi_{\infty}\equiv 0$ in $\cup_{m=1}^k\bar B_{R}(P_m)$.  

We next estimate $\phi_\infty$ in the outer region $\Omega_{\epsilon}\backslash\cup_{m=1}^k\bar B_{R}(P_m)$.
To this end, we first find $\hat\mu_n\rightarrow 0$ as $n\rightarrow \infty$ since $\Vert g_n\Vert_{*}$, $d_m^n$ and $\Vert \phi_n\Vert_{L^\infty(B_{R}(P_m)\cap\Omega_{\epsilon})}$ vanish for any $m=1,2,\cdots,k$.  Moreover, noting that there exists $C_8>0$ such that $\vert\phi_n\vert\leq C_8 w_n$ in $\Omega_{\epsilon}\backslash\cup_{m=1}^k\bar B_{R}(P_m)$, we conclude from $\hat\mu_n\rightarrow 0$ that $\phi_n\rightarrow 0$ in $\Omega_{\epsilon}\backslash \cup_{m=1}^k B_{R}(P_m)$. 

In summary, $\Vert\phi_n \Vert_{*}\rightarrow 0$ in $\Omega_{\epsilon}$ as $n\rightarrow \infty$, which reaches the contradiction with $\Vert \phi_n\Vert_{*}\equiv 1,$ $\forall n=1,2,\cdots$. This completes the proof of (\ref{phiestimate}).\\

\textbf{Step B-Existence of $\phi$:}\\
Let
\[X=\Big\{\phi\in H_{N_{*}}^2(\Omega_{\epsilon}):\int_{\Omega_{\epsilon}}\phi Z_mdy=0,\text{~~}m=1,2,\cdots,k\Big\}.\]
Then the $\phi$-equation in (\ref{project}) can be rewritten as
\begin{align}\label{compact}
\phi+B(\phi)=\tilde g\text{~~in~~} X,
\end{align}
where $\tilde g$ is defined by duality and $B: X\rightarrow X$ is a linear compact operator.  With the aid of Fredholm alternative theorem, it suffices to show that (\ref{compact}) admits the unique solution for $\tilde g=0$ so as to obtain the existence.  This statement follows from the discussion in Step A.  Therefore, we conclude Proposition \ref{proposition22} holds.
\end{proof}
Proposition \ref{proposition22} implies that there exists an invertible operator $\mathcal A$ such that $\phi=\mathcal A(g)$ and 
$$\Vert \mathcal A(g)\Vert_{*}\leq \bar C\Vert g\Vert_{*},$$
where $\bar C$ is a positive constant.  Now, we are well-prepared to solve the nonlinear equation satisfied by $\phi.$
\subsection{Nonlinear Projected Problem}
In this subsection, the contraction mapping theorem will be employed to show the existence of the solution $\phi$ to the following nonlinear projected problem:
\begin{align}\label{nonlinear}
\left\{\begin{array}{ll}
L_{\epsilon}(\phi)+E+N(\phi)=-\sum\limits_{m=1}^kd_mZ_m,\\
\int_{\Omega_\epsilon}\phi Z_mdy=0,\text{~for~}m=1,\cdots,k,\\
\phi\in H_{N_*}^2(\Omega_\epsilon),
\end{array}
\right.
\end{align}
where $E$ is the error of the approximate solution $U_0$ and $N(\phi)$ is the nonlinear term, which are defined by
\begin{align}\label{errore}
E=\nabla_{y} \cdot\bigg(\nabla_{y} U_0-\frac{1}{\epsilon^2} U_0\nabla_{y} A\bigg)\text{~~~and~~~} N(\phi)=\mu\epsilon^2f(U_0+\phi).
\end{align}
In light of Proposition \ref{proposition22}, the $\phi$-equation in (\ref{nonlinear}) can be rewritten as
\[\phi=T(\phi):=-\mathcal A(E+N(\phi)),\]
where $\mathcal A$ is an invertible operator.  To prove the existence, it is equivalent to showing that  $\phi$ is a fixed point of the operator $T$.  We restrict $\phi$ into the following Hilbert space:
\[\mathcal H:=\Big\{\phi\in H_{N_*}^2(\Omega_\epsilon):\Vert \phi\Vert_{*}\leq \bar C_1\epsilon,\text{~}\int_{\Omega_\epsilon}\phi Z_mdy=0\Big\},\]
where constant $\bar C_1>0$ will be determined later on.  We use the contraction mapping theorem to derive the results which are summarized as the following proposition.
\begin{proposition}\label{proposition23}
There exist $\sigma_3\in(0,1)$, $\epsilon_0>0$ and large constants $\bar C_2, \bar C_3>0$ such that for all $\epsilon\in(0,\epsilon_0]$, the following estimates hold:
\begin{align}\label{estar}
\Vert E\Vert_{*}\leq \bar C_2\epsilon\text{~~~and~~~}\Vert N(\phi)\Vert_{*}\leq \bar C_3\epsilon^2,
\end{align}
where $\phi\in\mathcal H.$  Moreover, (\ref{nonlinear}) admits a unique solution $\phi$ which satisfies
\begin{align}\label{contractionestimate}
\Vert\phi\Vert_{*}\leq \bar C_1\epsilon.
\end{align}
\end{proposition}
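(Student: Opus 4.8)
The plan is to recast the nonlinear projected problem (\ref{nonlinear}) as a fixed point equation and solve it by the contraction mapping theorem. By Proposition \ref{proposition22} the linear projected operator is invertible with inverse $\mathcal A$ obeying $\Vert\mathcal A(g)\Vert_*\leq\bar C\Vert g\Vert_*$, so (\ref{nonlinear}) is equivalent to $\phi=T(\phi):=-\mathcal A(E+N(\phi))$, and it suffices to show that $T$ is a contraction from the ball $\mathcal H$ into itself. The two norm estimates in (\ref{estar}) are precisely what is needed to run this argument: the bound on $E$ controls the size of $T(0)$, while the bound on $N(\phi)$ together with a Lipschitz estimate controls the contraction factor and hence both self-mapping and uniqueness.

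For the error $E$ defined in (\ref{errore}), the decisive observation is that each building block of the ansatz (\ref{ansatz}) is an exact kernel element of $L_\epsilon$. Writing the operator in the divergence form used in (\ref{ss1}), namely $\nabla\cdot(\nabla u-\chi u\nabla A)=\nabla\cdot(e^{\chi A}\nabla(e^{-\chi A}u))$, one sees that $e^{-\chi A}\big(c_m e^{\chi(A-A_m)}\big)=c_m e^{-\chi A_m}$ is constant, so $L_\epsilon\big(c_m e^{\chi(A-A_m)}\big)=0$. Hence, after substituting $U_0=\sum_m c_m e^{\chi(A-A_m)}\eta_m$ and using the mutual disjointness of the supports, $E=L_\epsilon(U_0)$ collapses to the commutator terms $\sum_m\big(\nabla_y V_m\cdot\nabla_y\eta_m+V_m\Delta_y\eta_m\big)$ with $V_m=c_m e^{\chi(A-A_m)}$, which are supported only in the collar regions where $\nabla\eta_m\neq 0$. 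There $V_m$ is exponentially small in $\epsilon$, and a direct comparison against the weight $\sum_m e^{\sigma_1 a_m}$ in the $*$-norm shows $\Vert E\Vert_*$ is in fact exponentially small, in particular $\Vert E\Vert_*\leq C\epsilon$.

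The estimate on $N(\phi)=\mu\epsilon^2 f(U_0+\phi)$ rests on the cubic structure of $f$. Since $f(0)=0$, the profile $f(U_0+\phi)$ decays at infinity like its linear part, i.e. like $U_0\sim e^{a_m}$ near each $x_m$, which is dominated by the weight $e^{\sigma_1 a_m}$ for $\sigma_1\in(0,1)$ because $a_m\leq 0$; choosing $\sigma_1,\sigma_3$ suitably then yields $\Vert f(U_0+\phi)\Vert_*\leq C$ for all $\phi\in\mathcal H$, and the explicit $\epsilon^2$ prefactor gives $\Vert N(\phi)\Vert_*\leq C\epsilon^2$. For the contraction property I would use the mean value theorem, $N(\phi_1)-N(\phi_2)=\mu\epsilon^2 f'(\xi)(\phi_1-\phi_2)$ with $\xi$ between $U_0+\phi_1$ and $U_0+\phi_2$; since $\Vert\phi_i\Vert_*\leq C\epsilon$ forces $\Vert\phi_i\Vert_{L^\infty}\leq C\epsilon$ (because $e^{\sigma_1 a_m}\leq 1$), the argument $\xi$ stays uniformly bounded, $f'(\xi)$ is bounded, and the pointwise identity gives $\Vert N(\phi_1)-N(\phi_2)\Vert_*\leq C\epsilon^2\Vert\phi_1-\phi_2\Vert_*$.

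Combining these, $\Vert T(\phi)\Vert_*\leq\bar C(\Vert E\Vert_*+\Vert N(\phi)\Vert_*)\leq\bar C(C_1\epsilon+C_2\epsilon^2)\leq C\epsilon$ after fixing $C$ large and $\epsilon_0$ small, so $T$ maps $\mathcal H$ into itself; likewise $\Vert T(\phi_1)-T(\phi_2)\Vert_*\leq\bar C\Vert N(\phi_1)-N(\phi_2)\Vert_*\leq\bar C C\epsilon^2\Vert\phi_1-\phi_2\Vert_*<\tfrac12\Vert\phi_1-\phi_2\Vert_*$ for $\epsilon$ small, so $T$ is a contraction. The Banach fixed point theorem then produces a unique $\phi\in\mathcal H$ with $\Vert\phi\Vert_*\leq C\epsilon$, solving (\ref{nonlinear}); its membership in $H^2_{N_*}(\Omega_\epsilon)$ and the orthogonality $\int_{\Omega_\epsilon}\phi Z_m\,dy=0$ are inherited from $\mathcal A$ mapping into $X$. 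The main obstacle is the pair of weighted-norm estimates for the cubic nonlinearity: one must verify that $f(U_0+\phi)$ and its increments decay no slower than the weight $\sum_m e^{\sigma_1 a_m}$ uniformly on the unbounded, $\epsilon$-dependent domain $\Omega_\epsilon$ — in particular in the overlap and far-field regions — so that the $\epsilon^2$ smallness is genuinely available in the $*$-norm and the contraction closes.
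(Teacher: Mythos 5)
Your proposal is correct and follows essentially the same route as the paper: both invoke Proposition \ref{proposition22} to define $T(\phi)=-\mathcal A(E+N(\phi))$, estimate $E$ via the fact that the ansatz reduces $L_\epsilon(U_0)$ to cutoff-commutator terms, bound $N(\phi)$ using the explicit $\epsilon^2$ prefactor, and close with the contraction mapping theorem on $\mathcal H$. Your observation that $\Vert E\Vert_*$ is actually exponentially small (rather than merely $O(\epsilon)$) is a slight sharpening of the paper's stated bound, but it does not change the structure of the argument.
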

\begin{proof}
We first compute the approximate error $E$ defined by (\ref{errore}).  Since the ansatz of $u$ satisfies (\ref{ansatz}), we have
\begin{align}\label{225}
E=&\sum_{m=1}^k\nabla_{y_m} \cdot\bigg(\nabla_{y_m} U_0-\frac{1}{\epsilon^2} U_0\nabla_{y_m} A\bigg)=\sum_{m=1}^k c_m\nabla_{y_m}\cdot(e^{a_m(y_m)}\nabla_{y_m}\hat\eta_m)\\
=&\sum_{m=1}^kc_m[e^{a_m(y_m)}\epsilon\nabla_{x} A\cdot\nabla_{y_m} \hat\eta_m+ e^{a_m(y_m)}\Delta_{y_m}\hat\eta_m]\nonumber=O(1){\epsilon}\sum_{m=1}^ke^{-h_m\vert y_m\vert^2},
\end{align}
where $h_m>0$ are constants.  It follows that there exists $\bar C_2>0$ such that $\Vert E\Vert_{*}\leq \bar C_2\epsilon.$  Next, we estimate $N(\phi)=\mu\epsilon^2f(U_0+\phi)$, where $\phi\in \mathcal H.$  Note that
\begin{align*}
\vert N(\phi)-\mu\epsilon^2f(U_0)\vert\leq \bar C_4\epsilon^2\vert\phi\vert,
\end{align*}
where $\bar C_4>0$ is some constant, then one has
\begin{align*}
\vert N(\phi)\vert \leq &\vert N(\phi)-\mu\epsilon^2f(U_0)\vert+\mu\epsilon^2\vert f(U_0)\vert\\
\leq &\bar C_4\epsilon^2\vert\phi\vert+\mu\epsilon^2\vert f(U_0)\vert\\
\leq &\bar C_4\epsilon^2\vert\phi\vert+\mu\epsilon^2 \vert U_0(1-U_0)(U_0-\theta)\vert \\
\leq &\bar C_4\epsilon^2\vert\phi\vert+\mu\epsilon^2 (U^2_0+U^3_0+\theta U_0+\theta U^2_0)\\
\leq &\bar C_4\epsilon^2\vert\phi\vert+O(1)\epsilon^2\sum_{m=1}^k e^{- h_m\vert y_m\vert^2},
\end{align*}
which implies 
\begin{align}\label{NNN}
\Vert N(\phi)\Vert_{*} \leq \bar C_5\epsilon^2\Vert\phi\Vert_{*}+\bar C_5\epsilon^2,
\end{align}
where $\bar C_5>0$ is some constant.

According to Proposition \ref{proposition22}, we have there exists some constant $\bar C>0$ such that
\begin{align*}
\Vert T(\phi)\Vert_{*}\leq& \bar C\Vert E+N(\phi)\Vert_{*}\\
\leq&\bar C(\Vert E\Vert_{*}+\Vert N(\phi) \Vert_{*}).
\end{align*} 
We shall show the operator $T$ is a mapping from $\mathcal H\rightarrow \mathcal H.$  By choosing $\bar C_1:=4\bar C\bar C_2$, we invoke \eqref{225} and \eqref{NNN} to obtain for any $\phi\in \mathcal H$,
\begin{align*}
\Vert T(\phi)\Vert_{*}\leq&\bar C(\Vert E\Vert_{*}+\Vert N(\phi) \Vert_{*})\\
\leq &\bar C(\bar C_2\epsilon+\bar C_5\epsilon^2\Vert\phi\Vert_{*}+\bar C_5\epsilon^2)\leq 2\bar C\bar C_2\epsilon+\bar C\bar C_5\epsilon^2\Vert\phi\Vert_{*}\\
\leq &2\bar C\bar C_2\epsilon+4\bar C^2\bar C_2\bar C_5\epsilon^3\leq 4\bar C\bar C_2\epsilon=\bar C_1\epsilon.
\end{align*}
As a consequence, $T(\phi)$ maps into itself.  Moreover, we further analyze $N(\phi)$ to obtain
\begin{align}\label{230}
\Vert N(\phi)\Vert_{*}=\mu\epsilon^2\Vert f(U_0+\phi)\Vert_{*}\leq & \bar C_5\epsilon^2\Vert\phi\Vert_{*}+\bar C_5\epsilon^2\nonumber\\
=&\bar C_1\bar C_5\epsilon^3+\bar C_5\epsilon^2\nonumber\\
=&\bar C_3\epsilon ^2,
\end{align}
where $\bar C_3>0$ is a constant.
We can see that (\ref{estar}) follows from (\ref{225}) and (\ref{230}).  

It is left to prove that $T$ satisfies the contraction property.  In fact, 
\begin{align*}
\Vert T(\phi_1)-T(\phi_2)\Vert_{*}\leq & \bar C\Vert N(\phi_1)-N(\phi_2)\Vert_{*}\\
\leq &\bar C\Vert\mu\epsilon^2f'(U_0)(\phi_1-\phi_2)\Vert_{*}+\bar C_6\Vert\epsilon^3(\phi_1-\phi_2)\Vert_{*}\\
\leq & \bar C_7\epsilon^2\Vert\phi_1-\phi_2\Vert_{*}=o(1)\Vert\phi_1-\phi_2\Vert_{*},
\end{align*}
where $\bar C_6,\bar C_7>0$ are large constants.  It follows that $T$ is a contraction mapping from $\mathcal H$ into $\mathcal H$. By the contraction mapping theorem we infer that there exists a fixed point $\phi$ which is a solution to (\ref{nonlinear}) in $\mathcal H$ and satisfies (\ref{contractionestimate}).
\end{proof}
\begin{remark}
With the help of the fixed point characterization of $\phi$ and the fact that the error $E$ depends continuously on the parameters $c_m$, $m=1,\cdots,k$, we obtain the map $(c_1,\cdots,c_k)\rightarrow \phi $ into the space $C(\bar \Omega_{\epsilon})$ is continuous in the $*$-norm. 
\end{remark}
\subsection{Reduced Problem}
After proving the existence of $\phi$, we solve the reduced problem $d_m=0$ for any $m=1,2,\cdots,k$ by adjusting the coefficients $c_{m,\chi}$.  Recall that $U$ is the solution of the following equation:
\begin{align}\label{231}
\nabla_y\cdot\bigg(\nabla_y U-\frac{1}{\epsilon^2} U\nabla_y A\bigg)+\mu f(U)=-\sum\limits_{m=1}^kd_mZ_m,
\end{align}
where $y\in\Omega_{\epsilon}$.  Testing (\ref{231}) against $\hat\eta_m$ and integrating it over $\Omega_{\epsilon}$, then we find
\begin{align}\label{232}
\int_{\Omega_{\epsilon}} \big[L_{\epsilon}(U_0+\phi)+E+N(U_0+\phi)\big]\hat\eta_mdy=-d_m\int_{\Omega_{\epsilon}} Z_m\hat \eta_mdy.
\end{align}
On the one hand, since $E$ satisfies (\ref{estar}), we obtain $\int_{\Omega_\epsilon}E\hat\eta_mdy=O(1)\epsilon$; on the other hand, $c_0$ is determined by the integral constraint (\ref{17}).  Therefore,
we have from (\ref{232}) that $d_m$ satisfies
\[d_m=\hat c_{m,\chi}+O(1)\epsilon,\]
where $\hat c_{m,\chi}:=c_{m,\chi}-c_0$.  It follows that $\hat c_{m,\chi}=O(\epsilon)=O(\frac{1}{\sqrt{\chi}})$ for $m=1,2,\cdots,k$.

\textbf{Proof of Theorem \ref{thm11}:} 
\begin{proof}
We let $U(y)=U_0(y)+\phi(y)$ be the solution to \eqref{ss}, where $$U_0:=\sum_{m=1}^kc_{m,\chi} e^{-\frac{1}{2}\sum\limits_{i=1}^nh_m^{(i)}\chi(x-x^{(i)}_m)^2}\eta_m.$$  
Here $c_{m,\chi}=c_0+\hat c_{m,\chi},$ and $\hat c_{m,\chi}$, $m=1,\cdots,k$ will be chosen later on.  Then, one gets $\phi$ satisfies (\ref{nonlinear}) with $E$ and $N(\phi)$ are given by $(\ref{errore}).$  To prove Theorem \ref{thm11}, we only need to show there exists some function $\phi$ to (\ref{nonlinear}) with $d_m=0$, $\forall m=1,2,\cdots,k.$  Proposition \ref{proposition23} implies that there exists a solution $\phi\in\mathcal H$ to (\ref{nonlinear}).  Since $\phi\in\mathcal H,$ we further have $\Vert\phi\Vert_{*}\leq \bar C_1\epsilon$ for some constant $\bar C_1>0.$  It is left to adjust $\hat c_{m,\chi}$ such that $d_m=0$.  Note that we have the asymptotic formula of $d_m$ is $d_m=\hat c_{m,\chi}+O(\epsilon),$ then one can obtain there exist $\hat c_{m,\chi}$ such that $d_m=0$ for any $m=1,\cdots,k.$  Moreover, $\hat c_{m,\chi}=O(\epsilon).$
In addition, since $U_0$ possesses the exponential decay property, we find 
$$\Big\vert U_0-\sum_{m=1}^kc_{m,\chi} e^{-\frac{1}{2}\sum\limits_{i=1}^nh_m^{(i)}\chi(x-x^{(i)}_m)^2}\Big\vert=o(1),$$
where $o(1)\rightarrow 0$ as $\chi\rightarrow\infty.$  Finally, we conclude the proof of Theorem \ref{thm11}. 
\end{proof}
This theorem establishes  nontrivial localized steady states to (\ref{origin}).  To further investigate their qualitative properties, we shall focus on the associated eigenvalue problem of (\ref{origin}).   
 \section{Eigenvalue Estimates and Stability Analysis}\label{sec3}
 In this section, we will investigate the local linear stability of the interior spike $u_s$ defined in Theorem \ref{thm11}.  To this end, we linearize (\ref{origin}) around $u_s$ and choose the solution $u$ in the following form: 
 $$u(x,t)=u_s+\varepsilon e^{\lambda t}\psi(x),$$
 where $\psi$ is a small perturbation and $\varepsilon\approx 0.$ The linearized problem becomes
 \begin{align}\label{ep}
\left\{\begin{array}{ll}
\mathcal L_{\chi}(\psi):=\nabla\cdot(d_1\nabla\psi-\chi\nabla A\psi)+\mu f'(u_s)\psi=\lambda\psi, &x\in\Omega,\\
(d_1\nabla\psi-\chi\nabla A\psi)\cdot\textbf{n}=0,&x\in\partial \Omega.
\end{array}
\right.
\end{align}
Since we concentrate on the effect of $\chi$ on the dynamics, in the sequel, other parameters $d_1$ and $\mu$ can be set as $\mu=d_1=1.$  As is shown in \cite{Nancy}, the function $\hat\psi$ defined by $\hat\psi:=e^{-\chi A}\psi$ plays  an important role in our analysis.  It is straightforward to show that $\hat\psi$ satisfies
 \begin{align}\label{epnew}
\left\{\begin{array}{ll}
{\hat{\mathcal L}}_{\chi}(\hat\psi):=\nabla\cdot\big(e^{\chi A}\nabla\hat\psi\big)+ f'(u_s)e^{\chi A}\hat\psi=\lambda e^{\chi A}\hat\psi, &x\in\Omega,\\
\frac{\partial \hat\psi}{\partial \textbf{n}}=0,&x\in\partial \Omega.
\end{array}
\right.
\end{align}
We consider $\hat\psi\in \mathcal Y$ where $\mathcal Y$ is given by
\[\mathcal Y=\big\{\hat\psi\in H^1(\Omega):\partial\hat\psi/{\partial \textbf{n}}=0\text{~on~}\partial \Omega\big\}.\]
In addition, the associated weighted inner product of $\mathcal Y$ is defined as
$$<\hat\phi,\hat\psi>_{\mathcal Y}:=\int_{\Omega}e^{\chi A}\big[\nabla \hat\psi\cdot\nabla \hat\phi+\hat\psi\hat\phi\big]dx,$$
and the norm is defined by
$$\Vert \hat\psi \Vert_{\mathcal Y}:=\sqrt{\int_{\Omega}e^{\chi A}\big[\vert\nabla \hat\psi\vert^2+\vert\hat\psi\vert^2\big]dx}.$$

We plan to perform a priori estimate of $\lambda$ so as to prove $\lambda$ is bounded.  To achieve our goal, we multiply the $\hat\psi$-equation in (\ref{epnew}) by $\hat\psi$ and integrate it over $\Omega$ to find 
\begin{align*}
-\int_{\Omega} e^{\chi A}\vert \nabla\hat\psi\vert^2 dx+\int_{\Omega}f'(u_s)e^{\chi A}\vert\hat\psi\vert^2dx=\lambda\int_{\Omega}e^{\chi A}\vert\hat\psi\vert^2 dx,
\end{align*}
which implies
\begin{align}\label{33}
\vert \lambda\vert\leq C_1\int_{\Omega} e^{\chi A}\vert \nabla\hat\psi\vert^2 dx+C_1\int_{\Omega} e^{\chi A}\vert f'(u_s)\vert\vert\hat\psi\vert^2dx,
\end{align}
where $C_1>0$ is a constant.  Since $f(u)\in C^\infty$ with respect to $u$, one has from (\ref{33}) that there exist constants $C_2,C_3>0$ such that
$$\vert\lambda\vert\leq C_2\Vert \hat\psi\Vert_{\mathcal Y}\leq C_3,$$
where $\hat \psi\in \mathcal Y.$  With the boundedness of $\lambda$, we are able to focus on the formulation of the eigenvalue problem satisfied by $<\lambda,\psi>$. 
\subsection{Formulation of the Eigenvalue Problem }
 Problem (\ref{ep}) can be simplified through a change of variable and the stretched variable can be defined as $y=\frac{x}{\epsilon}$.  To be more general, we assume that in the sequel, $u_s$ is a generic function satisfying the following transform:
$$u_s(x):=U_s(y),\quad U_s(y):=U_0(y)+\phi(y),$$
where $U_0(y)$ and $\phi(y)$ have the following properties:
\[\quad U_0(y)=\sum_{m=1}^kc_{m,\epsilon} e^{-\frac{1}{2}\sum\limits_{i=1}^nh_m^{(i)}(y-P^{(i)}_m)^2},\quad \vert \phi(y)\vert\leq M \epsilon e^{-\bar\sigma y},\]
where constant $M>0$ and $\bar\sigma\in(0,1)$ independent of $\epsilon$ and $y.$  Moreover, the old steady states and eigen-pairs $(x,u_s(x),\lambda,\psi(x))$ are equivalent to the new ones given by $(y,U_s(y),\mu,\Psi(y))$ via the following transforms:
 $$x=\epsilon y,\text{~~}\forall y\in\Omega_{\epsilon}\Leftrightarrow y=\frac{x}{\epsilon},\text{~~}\forall x\in\Omega,\quad u_s(x)=U_s\bigg(\frac{x}{\epsilon}\bigg)\Leftrightarrow U_s(y)=u_s(\epsilon y);$$
 and
 $$\lambda=\frac{\mu}{\epsilon^2}\Leftrightarrow \mu=\epsilon^2\lambda,\quad \psi(x)=\Psi\Bigg(\frac{x}{\epsilon}\Bigg)\Leftrightarrow \Psi(y)=\psi(\epsilon y).$$ 
 Therefore, (\ref{ep}) is equivalent to the following scaled eigenvalue problem:
 \begin{align}\label{psieq}
\left\{\begin{array}{ll}
\mathscr L_{\epsilon}(\Psi):=\nabla_y\cdot(\nabla_y\Psi-\frac{1}{\epsilon^2}\nabla_y A\Psi)+\epsilon^2f'(U_s)\Psi=\mu\Psi, &y\in\Omega_{\epsilon},\\
\Big(\nabla_y\Psi-\frac{1}{\epsilon^2}\nabla_y A\Psi\Big)\cdot\textbf{n}=0,&y\in\partial \Omega_{\epsilon}.
\end{array}
\right.
\end{align}
Similarly, it is useful to analyze the property of $\hat\Psi$ defined by $\hat\Psi:=e^{-\frac{A}{\epsilon^2}}\Psi$.  The equation for $\hat\Psi$ is shown as
 \begin{align*}
{\hat{\mathscr L}}_{\epsilon}(\hat\Psi):=\nabla_y\cdot\Big(e^{\frac{A}{\epsilon^2}}\nabla_y\hat\Psi\Big)+\epsilon^2f'(U_s)e^{\frac{A}{\epsilon^2}}\hat\Psi=\mu e^{\frac{A}{\epsilon^2}}\hat\Psi, \quad y\in\Omega_{\epsilon}.
\end{align*}
Let $\hat\Psi\in Y$ with $Y$ being given by 
\[Y=\Big\{\hat\Psi\in H^1(\Omega_{\epsilon}):\partial\hat\Psi/{\partial \textbf{n}}=0\text{~on~}\partial \Omega_{\epsilon}\Big\}.\]
 Define the weighted inner product and norm as
$$<\hat\Phi,\hat\Psi>_{Y}:=\int_{\Omega_{\epsilon}}e^{\frac{A}{\epsilon^2}}\big[\nabla_y \hat\Psi\cdot\nabla_y \hat\Phi+\hat\Psi\hat\Phi\big]dy$$
and
$$\Vert \hat\Psi \Vert_{Y}:=\sqrt{\int_{\Omega_{\epsilon}}e^{\frac{A}{\epsilon^2}}\big[\vert\nabla_y \hat\Psi\vert^2+\vert\hat\Psi\vert^2\big]dy}.$$
Then we have the following properties of the operator $\hat {\mathscr L}_{\epsilon}$:
\begin{lemma}
Assume that $U_s$ is generic. Then the operator $\hat{\mathscr L}_{\epsilon}$ in $Y$ is self-adjoint.  And the following conclusions hold for the eigen-pairs $\big<\mu,\hat\Psi(y)\big>$:
\begin{itemize}
    \item[(i).] all eigenvalues are real and eigenvectors corresponding to different eigenvalues are perpendicular to each other;
    \item[(ii).] $\{\big<\mu_i,\hat\Psi_i\big>\}_{i=1}^\infty$ consist of a complete set of eigen-pairs, and $\mu_1\geq \mu_2\geq\cdots$, which satisfy
    \begin{align*}
    \mu_i:=\max_{Z\subset Y,\dim Z=i}\min_{\hat\Psi\in Z}\Bigg\{\frac{-\int_{\Omega_{\epsilon}}\Big(e^{\frac{A}{\epsilon^2}}\vert\nabla_y\hat\Psi\vert^2\Big)dy+\epsilon^2\int_{\Omega_{\epsilon}}f'(U_s)e^{\frac{A}{\epsilon^2}}\vert\hat\Psi\vert^2dy}{\int_{\Omega_{\epsilon}}e^{\frac{A}{\epsilon^2}}\vert\hat\Psi\vert^2dy}\Bigg\}.
\end{align*} 
\end{itemize}
\end{lemma}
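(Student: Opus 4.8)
The plan is to recognize this as the standard spectral theory of a self-adjoint elliptic operator, the only non-routine points being the choice of the correct (weighted $L^2$) inner product and a spectral shift needed to handle the sign-indefinite zeroth-order coefficient $f'(U_s)$.

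First I would establish self-adjointness. Integrating by parts and using the Neumann condition $\partial\hat\Psi/\partial\textbf{n}=0$ on $\partial\Omega_{\epsilon}$ to annihilate the boundary term, for any $\hat\Phi,\hat\Psi\in Y$ one obtains
$$\int_{\Omega_{\epsilon}}\hat{\mathscr L}_{\epsilon}(\hat\Psi)\,\hat\Phi\,dy=-\int_{\Omega_{\epsilon}}e^{\frac{A}{\epsilon^2}}\nabla_y\hat\Psi\cdot\nabla_y\hat\Phi\,dy+\epsilon^2\int_{\Omega_{\epsilon}}f'(U_s)e^{\frac{A}{\epsilon^2}}\hat\Psi\hat\Phi\,dy=:B(\hat\Phi,\hat\Psi).$$
Since $B$ is manifestly symmetric, $\hat{\mathscr L}_{\epsilon}$ is self-adjoint with respect to the weighted inner product $\int_{\Omega_{\epsilon}}e^{A/\epsilon^2}\hat\Phi\hat\Psi\,dy$ that appears on the right-hand side of the generalized eigenvalue problem. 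Conclusion (i) then follows by the usual argument: if $\hat{\mathscr L}_{\epsilon}\hat\Psi_i=\mu_i e^{A/\epsilon^2}\hat\Psi_i$, then $\mu_i\int_{\Omega_{\epsilon}} e^{A/\epsilon^2}|\hat\Psi_i|^2\,dy=B(\hat\Psi_i,\hat\Psi_i)\in\mathbb R$ forces $\mu_i$ real, and pairing the $i$-th equation against $\hat\Psi_j$ gives $(\mu_i-\mu_j)\int_{\Omega_{\epsilon}}e^{A/\epsilon^2}\hat\Psi_i\hat\Psi_j\,dy=0$, hence orthogonality for distinct eigenvalues.

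To obtain completeness and discreteness I would pass to a resolvent. The key observation is that for each fixed $\epsilon$ the domain $\Omega_{\epsilon}$ is bounded and $A\in C^2(\bar\Omega)$, so the weight $e^{A(\epsilon y)/\epsilon^2}$ is bounded above and below by positive constants on $\bar\Omega_{\epsilon}$; consequently the weighted norms are equivalent to the standard $L^2(\Omega_{\epsilon})$ and $H^1(\Omega_{\epsilon})$ norms and the usual Sobolev machinery applies. Because $f'(U_s)$ is not sign-definite, I would first shift by a large constant $c>0$ so that the form $-B(\hat\Phi,\hat\Psi)+c\int_{\Omega_{\epsilon}}e^{A/\epsilon^2}\hat\Phi\hat\Psi\,dy$ is coercive on $Y$, and invoke the Lax--Milgram theorem to define a bounded solution operator $\mathcal T$ from the weighted $L^2$ space into $Y$. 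Composing with the compact embedding $H^1(\Omega_{\epsilon})\hookrightarrow L^2(\Omega_{\epsilon})$ (Rellich--Kondrachov) shows that $\mathcal T$ is compact and, by the symmetry of $B$, self-adjoint on the weighted $L^2$ space.

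The spectral theorem for compact self-adjoint operators then yields a complete orthonormal (in the weighted inner product) basis of eigenvectors $\hat\Psi_i$ with real eigenvalues accumulating only at $0$; translating the eigenvalues $\nu_i$ of $\mathcal T$ back via $\mu_i=\nu_i^{-1}-c$ produces the ordered sequence $\mu_1\geq\mu_2\geq\cdots$ of conclusion (ii) together with its completeness. The max--min formula is then exactly the Courant--Fischer variational characterization applied to the Rayleigh quotient $R[\hat\Psi]=B(\hat\Psi,\hat\Psi)/\int_{\Omega_{\epsilon}}e^{A/\epsilon^2}|\hat\Psi|^2\,dy$; since the eigenvalues are labelled in decreasing order, one takes the maximum over $i$-dimensional subspaces $Z\subset Y$ of the minimum of $R$ over $Z$. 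The only genuinely delicate point is the spectral shift forcing coercivity in the presence of the indefinite potential $f'(U_s)$; once that is in place the remainder of the argument is entirely standard.
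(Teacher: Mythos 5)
Your proposal is correct and takes essentially the same route as the paper: the paper's entire proof consists of the same integration-by-parts computation establishing symmetry of the form, followed by an appeal to ``standard results of self-adjoint operators,'' which is precisely the resolvent/Rellich--Kondrachov/Courant--Fischer machinery you spell out. One bookkeeping slip worth fixing: with your shifted coercive form, an eigenvalue $\mu$ of the original generalized problem corresponds to the eigenvalue $\nu=(c-\mu)^{-1}$ of the solution operator $\mathcal T$, so the translation back should read $\mu_i=c-\nu_i^{-1}$ rather than $\mu_i=\nu_i^{-1}-c$; as written, your formula would send the decreasing sequence $\nu_i\to 0^{+}$ to an increasing sequence $\mu_i\to+\infty$, contradicting the claimed ordering $\mu_1\geq\mu_2\geq\cdots$ and the fact that the operator is bounded above.
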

\begin{proof}
By using the integration by parts, one has for any $\hat \Phi$ and $\hat \Psi$ in $Y$,
\begin{align*}
<\hat {\mathscr L}_{\epsilon}\hat\Phi,\hat\Psi>_{Y}=-\int_{\Omega_{\epsilon}}\Big(e^{\frac{A}{\epsilon^2}}\nabla_y\hat\Psi\cdot \nabla_y\hat\Phi \Big)dy+\epsilon^2\int_{\Omega_{\epsilon}}f'(U_s)e^{\frac{A}{\epsilon^2}}\hat\Psi \hat\Phi dy=<\hat{\mathscr L}_{\epsilon}\hat\Psi,\hat\Phi>_{Y},
\end{align*}
which implies $\hat {\mathscr L}_{\epsilon}$ is self-adjoint.  The conclusions (i) and (ii) follow from the standard results of self-adjoint operators.
\end{proof}
\subsection{Spectrum Analysis}
After formulating the eigenvalue problems and stating the characterization of eigen-pairs $<\mu,\hat\Psi(y)>$, our focus is then on the study of the spectrum of the operator $ {\mathscr L}_{\epsilon}$ defined by (\ref{psieq}).  To this end, some results obtained from the previous Lyapunov-Schmidt reduction procedure need to be stated.  First of all, define the approximate kernel set as
\[\mathcal K_{\epsilon}:=\text{span}\{Z_m^{\epsilon}\vert m=1,\cdots,k\}\subset H^2_{N^*}(\Omega_{\epsilon}),\]
and $\mathcal C_{\epsilon}$ as the approximate cokernel set.  Then $\mathcal K^{\perp}_{\epsilon}$ and $\mathcal C^{\perp}_{\epsilon}$ are denoted as the orthogonal sets of $\mathcal K_{\epsilon}$ and $\mathcal C_{\epsilon}.$  It is necessary to next set $\pi_{\epsilon,P}$ and $\pi^{\perp}_{\epsilon,P}$ to be the projection of $L^2(\Omega_{\epsilon})$ onto $\mathcal C_{\epsilon}$ and $\mathcal C_{\epsilon}^{\perp}$, respectively.  Our argument in Section \ref{sec2} can be summarized as that there exists a unique solution $\phi$ such that
$$\pi_{\epsilon}^{\perp}\circ S_{\epsilon}(U_0+\phi)=0,$$
where $\phi\in \mathcal K^{\perp}_{\epsilon}$ and $S_{\epsilon}$ is defined by (\ref{operator}).
The results involving with a priori estimate of $\phi$ in Proposition \ref{proposition22} are stated as:
\begin{proposition}\label{prop31}
Let $\tilde L_{\epsilon}:=\pi_{\epsilon}^{\perp}\circ L_{\epsilon}$, where $L_{\epsilon}$ is given by (\ref{29}), then there exist positive constants $\epsilon_0$ and $C$ such that for all $\epsilon\in(0,\epsilon_0]$, the solution $\phi$ to (\ref{project}) satisfies
\[\Vert \phi\Vert_{*}\leq C\Vert \tilde L_{\epsilon}\phi\Vert_{*},\]
where $\phi\in \mathcal K_{\epsilon}^{\perp}$.
\end{proposition}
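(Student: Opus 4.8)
The plan is to view Proposition \ref{prop31} not as a new result but as a reformulation of the a priori estimate already proved in Proposition \ref{proposition22}, now phrased through the projected operator $\tilde L_\epsilon = \pi_\epsilon^\perp \circ L_\epsilon$. The only real work is to match the two statements: I must exhibit, for an arbitrary $\phi \in \mathcal K_\epsilon^\perp$, a right-hand side $g$ and coefficients $\{d_m\}$ so that $(\phi, \{d_m\})$ is precisely the solution of the linearized projected problem (\ref{project}) associated with $g$, and then read off the estimate $\Vert \phi \Vert_* \le C \Vert g \Vert_*$ supplied by Proposition \ref{proposition22}.

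First I would take $\phi \in \mathcal K_\epsilon^\perp$ and set $g := \tilde L_\epsilon \phi = \pi_\epsilon^\perp(L_\epsilon \phi)$, so that $g \in \mathcal C_\epsilon^\perp$ by construction. Since the approximate cokernel is $\mathcal C_\epsilon = \text{span}\{Z_1, \dots, Z_k\}$ and $\pi_\epsilon + \pi_\epsilon^\perp = \mathrm{Id}$ on $L^2(\Omega_\epsilon)$, the complementary piece $L_\epsilon \phi - g = \pi_\epsilon(L_\epsilon \phi)$ lies in $\mathcal C_\epsilon$ and can therefore be written as $-\sum_{m=1}^k d_m Z_m$ for uniquely determined coefficients $d_m$. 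Because $\phi \in \mathcal K_\epsilon^\perp$ automatically satisfies the orthogonality conditions $\int_{\Omega_\epsilon} \phi Z_m \, dy = 0$, the pair $(\phi, \{d_m\})$ solves (\ref{project}) with this particular $g$. Invoking Proposition \ref{proposition22}, whose uniqueness clause identifies our $\phi$ with the solution it produces for this $g$, then yields $\Vert \phi \Vert_* \le C \Vert g \Vert_* = C \Vert \tilde L_\epsilon \phi \Vert_*$, which is exactly the claimed inequality.

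The delicate point, and the step I expect to require the most care, is the compatibility between the $L^2$-orthogonal projections $\pi_\epsilon, \pi_\epsilon^\perp$ used to define $\tilde L_\epsilon$ and the weighted $*$-norm in which the conclusion is phrased: the $*$-norm is not induced by the $L^2$ inner product, so I must verify that $g = \tilde L_\epsilon \phi$ genuinely has finite $*$-norm comparable to $\Vert L_\epsilon \phi \Vert_*$, i.e. that $\pi_\epsilon^\perp$ acts boundedly between the $*$-normed spaces. This reduces to controlling the cokernel component $\pi_\epsilon(L_\epsilon \phi) = -\sum_m d_m Z_m$ in the $*$-norm, which I would do exactly as in the derivation of (\ref{dmestimate}): the kernels $Z_m$ are exponentially localized Gaussians whose centres $P_m$ are separated by distances of order $1/\epsilon$, so their Gram matrix is nearly diagonal and uniformly invertible, the off-diagonal overlaps being controlled by estimate (\ref{217}). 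With this boundedness established the identification above is rigorous, and Proposition \ref{prop31} follows as a direct corollary of Proposition \ref{proposition22}.
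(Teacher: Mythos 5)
Your proposal is correct and coincides with the paper's own treatment: the paper offers no independent proof of Proposition \ref{prop31}, presenting it explicitly as a restatement of the a priori estimate in Proposition \ref{proposition22}, and your identification of $g:=\tilde L_{\epsilon}\phi$ with the data of the projected problem (\ref{project}), with $-\sum_{m}d_mZ_m:=\pi_{\epsilon}(L_{\epsilon}\phi)$, is exactly the bookkeeping the paper leaves implicit. The only remark is that your ``delicate point'' is lighter than you fear: the claimed inequality is vacuous when $\Vert\tilde L_{\epsilon}\phi\Vert_{*}=\infty$, so no quantitative boundedness of $\pi_{\epsilon}^{\perp}$ between the $*$-normed spaces is actually required beyond what you already invoke.
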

Furthermore, the results for the nonlinear projected problem of $\phi$ stated in Proposition \ref{proposition23} are rewritten and shown in the following proposition:
\begin{proposition}\label{prop2}
There exists a positive constant $\epsilon_0$ such that for all $\epsilon\in(0,\epsilon_0]$, (\ref{nonlinear}) admits the unique solution $\phi\in\mathcal K_{\epsilon}^{\perp}$ satisfying $S_{\epsilon}(U_{0}+\phi)\in \mathcal C_{\epsilon}$ such that
\[\Vert \phi\Vert_{*}\leq C\epsilon.\]
\end{proposition}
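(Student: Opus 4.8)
The plan is to recognize that Proposition \ref{prop2} is the operator-theoretic reformulation of the nonlinear projected problem (\ref{nonlinear}), and to solve it by a fixed-point argument built on the linear theory of Proposition \ref{prop31}. First I would observe that the two conditions $\phi\in\mathcal K_\epsilon^\perp$ and $S_\epsilon(U_0+\phi)\in\mathcal C_\epsilon$ together are equivalent to the single projected equation $\pi_\epsilon^\perp\circ S_\epsilon(U_0+\phi)=0$ posed on $\mathcal K_\epsilon^\perp$. Expanding $S_\epsilon(U_0+\phi)=L_\epsilon(\phi)+E+N(\phi)$ with $E$ and $N(\phi)$ as in (\ref{errore}), this becomes $\tilde L_\epsilon(\phi)+\pi_\epsilon^\perp\big(E+N(\phi)\big)=0$, where $\tilde L_\epsilon=\pi_\epsilon^\perp\circ L_\epsilon$.

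Next I would invoke Proposition \ref{prop31}, which guarantees that $\tilde L_\epsilon$ is invertible on $\mathcal K_\epsilon^\perp$ with the bound $\Vert\phi\Vert_*\leq C\Vert\tilde L_\epsilon\phi\Vert_*$; denote its inverse by $\mathcal A$, so that $\Vert\mathcal A(g)\Vert_*\leq \bar C\Vert g\Vert_*$. Then the projected equation is equivalent to the fixed-point problem $\phi=T(\phi):=-\mathcal A\big(\pi_\epsilon^\perp(E+N(\phi))\big)$ on the closed ball $\mathcal H=\{\phi\in\mathcal K_\epsilon^\perp:\Vert\phi\Vert_*\leq C\epsilon\}$.

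The core of the argument is then to show that $T$ is a contraction mapping $\mathcal H$ into itself, which I would carry out exactly as in the proof of Proposition \ref{proposition23}. Using the a priori bounds $\Vert E\Vert_*\leq C\epsilon$ and $\Vert N(\phi)\Vert_*\leq C\epsilon^2$ from (\ref{estar}), together with the boundedness of $\mathcal A$, one gets $\Vert T(\phi)\Vert_*\leq \bar C(\Vert E\Vert_*+\Vert N(\phi)\Vert_*)\leq C\epsilon$ for $\epsilon$ small, so $T$ maps $\mathcal H$ into itself. For the contraction property, I would Taylor-expand the cubic nonlinearity $f$ to write $N(\phi_1)-N(\phi_2)=\mu\epsilon^2\big(f(U_0+\phi_1)-f(U_0+\phi_2)\big)$ and exploit that $f'$ is bounded on the relevant range, yielding $\Vert T(\phi_1)-T(\phi_2)\Vert_*\leq C\epsilon^2\Vert\phi_1-\phi_2\Vert_*=o(1)\Vert\phi_1-\phi_2\Vert_*$. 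The Banach fixed-point theorem then delivers a unique $\phi\in\mathcal H$ with $\Vert\phi\Vert_*\leq C\epsilon$, and the coefficients $d_m$ are recovered from $S_\epsilon(U_0+\phi)=-\sum_{m=1}^k d_m Z_m\in\mathcal C_\epsilon$.

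The main obstacle I anticipate is the Lipschitz estimate on $N$ in the weighted $*$-norm: because the source is cubic and the norm carries the Gaussian weight $\big(\sum_{m=1}^k e^{\sigma_1 a_m(y)}\big)^{-1}$, one must verify that products such as $U_0\phi$ and $\phi^2$ remain controlled in this norm uniformly in $\epsilon$. This hinges on the exponential localization built into the $*$-norm together with the Gaussian decay of $U_0$, which ensure the cubic terms inherit the required weighted bound; the extra factor $\epsilon^2$ in front of $f$ is what ultimately makes the map a genuine contraction.
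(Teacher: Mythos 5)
Your proposal is correct and follows essentially the same route as the paper: Proposition \ref{prop2} is explicitly presented there as a restatement, in projection-operator language, of Proposition \ref{proposition23}, whose proof is exactly the contraction argument you describe — the fixed-point map $T(\phi)=-\mathcal A(E+N(\phi))$ on the ball $\Vert\phi\Vert_*\leq C\epsilon$, using the estimates $\Vert E\Vert_*\leq C\epsilon$, $\Vert N(\phi)\Vert_*\leq C\epsilon^2$ and the invertibility of the projected linear operator furnished by Propositions \ref{proposition22}--\ref{prop31}. Your identification of the two conditions $\phi\in\mathcal K_\epsilon^\perp$, $S_\epsilon(U_0+\phi)\in\mathcal C_\epsilon$ with the projected equation $\pi_\epsilon^\perp\circ S_\epsilon(U_0+\phi)=0$, and the recovery of the coefficients $d_m$ at the end, match the paper's treatment.
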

The next step is to give the asymptotics satisfied by eigen-pairs $<\mu,\Psi>$ via Proposition \ref{prop31} and \ref{prop2}.  Our results are summarized as follows:
\begin{proposition}\label{prop3}
Let $<\mu,\Psi>$ be the solutions to (\ref{psieq}), then there exists $\epsilon_0>0$ such that for all $\epsilon\in(0,\epsilon_0]$, $i=1,2,\cdots,k$,
\begin{align}\label{muasy}
\mu_i =\epsilon^2\alpha_0h'(c_{i,0})+ O(1)\epsilon^3,
\end{align}
where $\alpha_0:=\frac{1}{\int_{\mathbb R^n}e^{-\vert y\vert^2}dy}$ and $c_{i,0}$ is either $c_0$ defined by (\ref{c0}) or $0$.  Furthermore,
$$\Psi_i=\sum_{m=1}^k\big[e^{(i)}_{m,0}+o(1)\big]Z_m+o(1),$$
where $e^{(i)}_{m,0}$ are non-negative constants but not identically zero.  As a consequence,
$$\lambda_i =\alpha_0h'(c_{i,0})+ O(1)\epsilon.$$
\end{proposition}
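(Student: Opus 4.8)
The plan is to treat the scaled problem (\ref{psieq}) as a finite-dimensional perturbation of the degenerate operator $L_\epsilon$ and to carry out a Lyapunov--Schmidt reduction at the level of the eigenvalue problem, mirroring the construction of $u_s$ in Section \ref{sec2}. Since $\hat{\mathscr L}_\epsilon$ is self-adjoint in $Y$, every $\mu$ is real and we may invoke the max--min characterization from the preceding lemma. The first observation is that the approximate kernel $\mathcal K_\epsilon=\mathrm{span}\{Z_m\}$ is exactly $k$-dimensional and that, by Proposition \ref{prop31}, the restriction $\pi_\epsilon^\perp\circ L_\epsilon$ is invertible on $\mathcal K_\epsilon^\perp$ with an $\epsilon$-independent bound. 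Because $\epsilon^2 f'(U_s)$ is a relatively bounded perturbation of size $O(\epsilon^2)$, this invertibility persists for $\pi_\epsilon^\perp\circ\mathscr L_\epsilon$, which forces the quadratic form attached to $\hat{\mathscr L}_\epsilon$ to be bounded away from zero on $\mathcal K_\epsilon^\perp$. Consequently $\mu_{k+1},\mu_{k+2},\dots$ are all $\le -c<0$ for some constant, whereas the top $k$ eigenvalues satisfy $\mu_i=O(\epsilon^2)$ (the upper bound also follows from the a priori estimate $\lambda\le C$ established above). This spectral gap is what isolates the $k$ eigenvalues of interest.

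Next I would decompose any eigenfunction of a top eigenvalue as $\Psi=\sum_{m=1}^k a_m Z_m+\Psi^\perp$ with $\Psi^\perp\in\mathcal K_\epsilon^\perp$. Inserting this into (\ref{psieq}) and applying $\pi_\epsilon^\perp$, the invertibility above lets me solve for $\Psi^\perp$ in terms of $\{a_m\}$ and $\mu$; since $\mathscr L_\epsilon Z_m=L_\epsilon Z_m+\epsilon^2 f'(U_s)Z_m=O(\epsilon)$ in the $*$-norm (by the same computation as for the error $E$ in (\ref{225}), as $\frac{1}{\epsilon^2}\nabla A=\nabla a_m+O(\epsilon)$ near $P_m$), solving $(\pi_\epsilon^\perp\circ\mathscr L_\epsilon-\mu)\Psi^\perp=-\pi_\epsilon^\perp\mathscr L_\epsilon(\sum a_m Z_m)$ yields $\|\Psi^\perp\|_*=O(\epsilon)\,|\vec a|$, a genuine lower-order correction. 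Applying the complementary projection — equivalently, testing (\ref{psieq}) against the cokernel elements $\hat\eta_j$, which are the true near-kernel of the $L^2$-adjoint $L_\epsilon^{\ast}=e^{-A/\epsilon^2}\nabla\cdot(e^{A/\epsilon^2}\nabla\,\cdot\,)$ — produces a $k\times k$ generalized matrix eigenvalue problem $M(\epsilon)\vec a=\mu\,G(\epsilon)\vec a$, with mass matrix $G_{jm}=\int_{\Omega_\epsilon}Z_m\hat\eta_j\,dy$ and $M_{jm}=\int_{\Omega_\epsilon}\mathscr L_\epsilon Z_m\,\hat\eta_j\,dy$ plus the (higher-order) contribution of $\Psi^\perp$.

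The computation of $M(\epsilon)$ is then routine. Since distinct spikes are separated by $|P_m-P_j|=O(1/\epsilon)$, all off-diagonal entries of $M$ and $G$ are $O(e^{-c/\epsilon^2})$ and may be discarded; the diagonal contribution $\int_{\Omega_\epsilon}(L_\epsilon Z_m)\hat\eta_m=\int_{\Omega_\epsilon}Z_m L_\epsilon^{\ast}\hat\eta_m$ is supported on the cut-off transition region, where $Z_m$ is exponentially small, so the surviving term is $\epsilon^2\int_{\Omega_\epsilon} f'(U_s)Z_m\hat\eta_m\,dy$. Using $U_s\approx c_{m,\epsilon}Z_m$ near $P_m$ and the rescaling $z^{(i)}=\sqrt{h_m^{(i)}/2}\,(y^{(i)}-P_m^{(i)})$, which sends $e^{a_m}\mapsto e^{-|z|^2}$ and cancels the anisotropy factors between numerator and denominator, gives
\[
\frac{M_{mm}}{G_{mm}}=\epsilon^2\,\frac{\int_{\mathbb R^n}f'(c_{m,\epsilon}e^{-|z|^2})e^{-|z|^2}\,dz}{\int_{\mathbb R^n}e^{-|z|^2}\,dz}+O(\epsilon^3)=\epsilon^2\alpha_0 h'(c_{m,\epsilon})+O(\epsilon^3),
\]
where $h(c):=\int_{\mathbb R^n}f(ce^{-|z|^2})\,dz$ is exactly the function whose zero set fixes the spike height via (\ref{17}). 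As $c_{m,\epsilon}=c_{m,0}+O(\epsilon)$ with $c_{m,0}\in\{c_0,0\}$ by Theorem \ref{thm11}, this yields $\mu_i=\epsilon^2\alpha_0 h'(c_{i,0})+O(\epsilon^3)$, and division by $\epsilon^2$ gives the stated formula for $\lambda_i$. Because the reduced matrix is diagonal to leading order, its normalized eigenvectors are coordinate vectors — or, on an eigenspace where several $c_{m,0}$ coincide, non-negative combinations of them — which translates into $\Psi_i=\sum_m[e^{(i)}_{m,0}+o(1)]Z_m+o(1)$ with $e^{(i)}_{m,0}\ge 0$ not all zero.

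The step I expect to be the real obstacle is the second one: rigorously justifying the reduction, namely the uniform spectral gap guaranteeing exactly $k$ small eigenvalues, each with eigenfunction trapped in an $O(\epsilon)$ neighbourhood of $\mathcal K_\epsilon$. This requires upgrading the invertibility of $\pi_\epsilon^\perp\circ L_\epsilon$ in Proposition \ref{prop31} to that of $\pi_\epsilon^\perp\circ\mathscr L_\epsilon-\mu$, uniformly for $|\mu|\le C\epsilon^2$, and controlling the feedback of $\Psi^\perp$ and of the $O(\epsilon)$ correction $\phi$ inside $f'(U_s)$ finely enough to secure the claimed $O(\epsilon^3)$ accuracy. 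By contrast, the exponential smallness of the off-diagonal entries and the change-of-variables computation are straightforward.
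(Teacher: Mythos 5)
Your proposal is correct in substance and its computational core coincides with the paper's: both reduce (\ref{psieq}) to a $k\times k$ problem by testing against the cut-offs $\hat\eta_j$, both discard the exponentially small spike--spike interactions coming from $|P_m-P_j|=O(1/\epsilon)$, and both identify the diagonal entries as $\epsilon^2\alpha_0 h'(c_{m,\epsilon})$ via the same Gaussian rescaling (your normalization $h(c)=\int_{\mathbb R^n} f(ce^{-|z|^2})\,dz$ differs from the paper's polynomial $h$ by a positive constant factor, which is immaterial for the signs and for the stated asymptotics). The scaffolding differs in two ways. First, to isolate the relevant modes the paper does not invoke a spectral gap: it takes an arbitrary eigenpair, uses the a priori bound on $\lambda$, extracts subsequences $\epsilon_n\to 0$, and applies the Liouville-type classification of Proposition \ref{proposition21} to conclude that the limiting eigenfunction is a combination of the $Z_m$; your route via self-adjointness, the max--min principle and a gap on $\mathcal K_\epsilon^\perp$ would, if completed, prove somewhat more than the paper does --- namely that there are \emph{exactly} $k$ eigenvalues of order $O(\epsilon^2)$ while the rest of the spectrum is uniformly negative, a fact the stability conclusion in case (i) of Theorem \ref{thm12} implicitly relies on but which the paper never isolates. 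Second, the paper decomposes along the exact kernel elements $Z_m^\epsilon=e^{(A-A_m)/\epsilon^2}$, for which $L_\epsilon Z_m^\epsilon=0$ identically, so the residual consists only of $\epsilon^2 f'(U_s)Z_m^\epsilon$ and one gets $\Vert\Psi_\epsilon^\perp\Vert_*=O(\epsilon^2)\sum_m|e_{m,\epsilon}|$ at once; with your choice of the Gaussians $Z_m$ the residual is only $O(\epsilon)$, and you must then verify (as you do, correctly) that this larger remainder feeds back into the reduced matrix only at order $\epsilon^3$, through $\epsilon^2 f'(U_s)\Psi^\perp$ and $\mu\Psi^\perp$, the transport part being killed by integration by parts against $\nabla\hat\eta_j$. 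The one step you should not regard as routine is the passage from the $*$-norm invertibility of $\pi_\epsilon^\perp\circ L_\epsilon$ in Proposition \ref{prop31} to coercivity of the weighted-$L^2$ quadratic form of $\hat{\mathscr L}_\epsilon$ on $\mathcal K_\epsilon^\perp$: these involve different norms and different notions of orthogonal complement, so the implication is a genuine leap rather than a perturbation argument; you flag this honestly, and the paper's compactness argument is precisely the device by which it sidesteps that issue (at the price of saying nothing about the spectrum beyond the first $k$ eigenvalues).
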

\begin{proof}
Note that $\lambda$ is bounded, then we can choose sub-sequences $\epsilon_n$, $\hat \Psi_n:=e^{-\frac{A}{\epsilon^2_n}}\Psi_n$ such that $\epsilon_n\rightarrow 0$,  $\Psi_n\rightarrow \Psi_0$ and $\hat \Psi_n\rightarrow \hat\Psi_0$.  Moreover, $\hat\Psi_0$ near every centre of the interior spike defined by $P_m$ satisfies
\begin{align}\label{36}
\left\{\begin{array}{ll}
\hat{\mathscr L}_0^m(\hat\Psi_0^m):=\nabla_{y_m}\cdot\Big(e^{a_m(y_m)}\nabla_{y_m}\hat\Psi_0^m\Big)=0,&y_m\in\mathbb R^n,\\
\hat \Psi_0^m\in H^1(\mathbb R^n),\quad \vert \hat\Psi_0^m\vert=O(1)e^{-\tilde\sigma_1\vert y_m\vert^2}
\end{array}
\right.
\end{align}
where $y_m:=\frac{ x-x_m}{\epsilon}$ and $\tilde\sigma_1\in (0,1).$  According to Proposition \ref{proposition21}, system (\ref{36}) admits the unique solution $\hat\Psi_0^m\equiv C,$ where $C>0$ is a constant, which implies $\Psi_0^m=Ce^{a_m(y_m)}.$ As a consequence, $\Psi_0$ has the following form:
\begin{align*}
\Psi_0=\sum_{m=1}^ke_{m}\Psi_0^m=\sum_{m=1}^k e_{m}Z_m,
\end{align*}
where $e_{m}>0$ are constants for $m=1,2,\cdots,k$ and $Z_m$ are approximate kernels of $L_{\epsilon}$ with $Z_m=e^{a_m(y_m)}.$  We would like to point out that $Z_m$ is an approximation of $Z_m^{\epsilon}$ defined by $e^{\frac{1}{\epsilon^2}(A-A_m)}$, which is the solution of
\[\nabla_y\cdot\bigg(\nabla_yZ_m^{\epsilon}-\frac{1}{\epsilon^2}\nabla_y AZ_m^{\epsilon}\bigg)=0,\quad y_m\in \mathbb R^n.\]
  
Now, we take the ansatz of $\Psi$, the eigen-funcion of the operator $\mathscr L_{\epsilon}$, as 
$$\Psi_0^{\epsilon}=\sum_{m=1}^k e_{m,\epsilon}Z_m^{\epsilon},$$
where $e_{m,\epsilon}>0$ are constants.  Thus $\Psi$ can be decomposed as
\[\Psi=\Psi_0^{\epsilon}+\Psi^{\perp}_{\epsilon},\]
where $\Psi_{\epsilon}^{\perp}\perp \mathcal K_{\epsilon}$ and $\Vert\Psi_{\epsilon}^{\perp}\Vert_{*}=o(1).$  Recall $\mathscr L_{\epsilon}$ is given in (\ref{psieq}), then by straightforward calculation, we have 
\begin{align*}
\mathscr L_{\epsilon}\Psi=&\sum_{m=1}^ke_{m,\epsilon}\bigg[\nabla_y\cdot\bigg(\nabla_yZ_m^{\epsilon}-\frac{1}{\epsilon^2}\nabla_yAZ_m^{\epsilon}\bigg)+\epsilon^2f'(U_s)Z_m^{\epsilon}\bigg]\\
&+\nabla_y\cdot\bigg(\nabla_y\Psi_{\epsilon}^{\perp}-\frac{1}{\epsilon^2}\nabla_yA\Psi_{\epsilon}^{\perp}\bigg)+\epsilon^2f'(U_s)\Psi_{\epsilon}^{\perp}\\
=&\mathscr L_{\epsilon}\Psi_{\epsilon}^{\perp}+\epsilon^2[f'(U_s)-f'(U_0)]\Psi_{\epsilon}^{\perp}+\sum_{m=1}^ke_{m,\epsilon}\epsilon^2f'(U_s)Z_m^{\epsilon}\\
=&\sum_{m=1}^k \epsilon^2\lambda e_{m,\epsilon}Z_m^\epsilon+\epsilon^2\lambda\Psi_{\epsilon}^{\perp}.
\end{align*}
In light of Proposition \ref{prop31} and the boundedness of $\lambda$, the following estimate holds:
\begin{align}\label{38}
\Vert \Psi_{\epsilon}^{\perp}\Vert_{*}\leq&C\Vert \epsilon^2[f'(U_s)-f'(U_0)]\Psi_{\epsilon}^{\perp}\Vert_{*}+C\bigg\Vert\sum_{m=1}^ke_{m,\epsilon}\epsilon^2f'(U_s)Z_m^{\epsilon}\bigg\Vert_{*}\nonumber\\
\leq &C_1\epsilon^2 \sum_{m=1}^k \vert e_{m,\epsilon}\vert,
\end{align}
where $C$ and $C_1$ are positive constants.  (\ref{38}) implies $\Psi_{\epsilon}^{\perp}$ can be neglected and $\Psi$ satisfies
\[\Psi=\sum_{m=1}^k e_{m,\epsilon}Z_m^\epsilon+O(1)\epsilon^2\sum_{m=1}^k\vert e_{m,\epsilon}\vert.\]
Since we have the fact that
\begin{align*}
Z_m^{\epsilon}=Z_m+O(1)\epsilon e^{-\hat\sigma_5\vert y_m\vert^2}\text{~for small~}\hat\sigma_5>0,
\end{align*}
$\Psi$ has the following form: 
\[\Psi=\sum_{m=1}^k e_{m,\epsilon}Z_m+O(1)\epsilon\sum_{m=1}^k\vert e_{m,\epsilon}\vert.\]
Now, the estimates of eigen-functions have been completed.  We obtain that there exists $k$ eigenpairs $<\mu_i,\Psi_i>$, $i=1,2,\cdots,k$ such that $\Psi_i$ satisfies
\begin{align}\label{Psi0}
\Psi_i=&\sum_{m=1}^k e^{(i)}_{m,\epsilon}Z_m^\epsilon+O(1)\epsilon^2\sum_{m=1}^k\vert e^{(i)}_{m,\epsilon}\vert\nonumber\\
=&\sum_{m=1}^k [e^{(i)}_{m,0}+o(1)]Z_m+O(1)\epsilon\sum_{m=1}^k\vert e^{(i)}_{m,\epsilon}\vert.
\end{align}
We next analyze the asymptotic behavior of the corresponding eigenvalues $\mu_i,$ then derive the property of $\lambda_i:=\epsilon^2\mu_i.$  To begin with, let $i=1,2,\cdots,k$ be arbitrary but fixed, then we test the $\Psi$-equation in (\ref{psieq}) against $\hat\eta_i$ and integrate it over $\Omega_{\epsilon}$ to obtain
\begin{align}\label{eigeneq}
\overbrace{\frac{1}{\epsilon^2}\int_{\Omega_{\epsilon}}\bigg(\nabla_y\Psi_{i}-\frac{1}{\epsilon^2}\nabla_y A\Psi_{i}\bigg)\nabla_y \hat \eta_i dy}^{I_1}+\int_{\Omega_{\epsilon}}f'(U_s)\Psi_{i}\hat\eta_i dy=\lambda_i\int_{\Omega_{\epsilon}}\Psi_i \hat\eta_i dy.
\end{align}
Our next aim is to estimate $I_1$, which we define $y_i=y-P_i$, becomes
\begin{align}\label{39}
I_1=\frac{1}{\epsilon^2}\int_{B_{R_{i,\epsilon}}(0)\backslash B_{R_{i,\epsilon}-\tilde\delta}(0)}\bigg(\nabla_y\Psi_{i}-\frac{1}{\epsilon^2}\nabla_y A\Psi_{i}\bigg)\nabla_y \hat \eta_i dy_i,
\end{align}
where $\tilde \delta>0$ is small and $R_{i,\epsilon}={R_i}/\epsilon$ with a large constant $R_i$.  Since $\Psi_i=O(1)\frac{1}{\epsilon^2}e^{-\hat\sigma_6\vert R_i/\epsilon\vert^2}$ for some small constant $\hat\sigma_6>0$ on $\partial B_{R_{i,\epsilon}}(0)$, one obtains from (\ref{39}) that there exists $\hat\sigma_7>0$ such that $I_1=O(1)e^{-\hat\sigma_7\vert R_i/\epsilon \vert^2}$.  Moreover,
\begin{align}\label{310}
\int_{\Omega_{\epsilon}}f'(U_s)\Psi_i\hat\eta_i dy=&e_{i,\epsilon}\int_{B_{R_{i,\epsilon}(P_i)}}f'(U_s)Z_i^{\epsilon}\hat\eta_idy+O(1)\epsilon^2\sum_{m=1}^k\vert e^{(i)}_{m,\epsilon}\vert\int_{\Omega_{\epsilon}}f'(U_s)\hat\eta_i dy\nonumber\\
&+\sum_{m\not=i}e_{m,\epsilon}\int_{B_{R_{i,\epsilon}}(P_i)}f'(U_s)Z_m^{\epsilon}\hat \eta_idy,
\end{align}
where the last term has the order satisfying $O\bigg(e^{-\hat\sigma_8\frac{\min_{m\not=i}\vert x_m-x_i\vert^2}{\epsilon^2}}\bigg)$ with small $\hat\sigma_8>0$.  Thus, by applying $f'(U_0+\phi)=f'(U_0)+f''(U_0)\phi$ to (\ref{310}), one has
\begin{align*}
\int_{\Omega_{\epsilon}}f'(U_s)\Psi_{i}\hat\eta_i dy=&e_{i,\epsilon}\int_{B_{R_{i,\epsilon}}(P_i)}f'(U_0)Z_i^{\epsilon}\hat \eta_idy+e_{i,\epsilon}\int_{B_{R_{i,\epsilon}}(P_i)}f''(U_0)\phi Z_i^{\epsilon}\hat \eta_i dy+O(1)\epsilon^2\sum_{m=1}^k\vert e^{(i)}_{m,\epsilon}\vert\\
= &e_{i,\epsilon}\int_{B_{R_{i,\epsilon}}(P_i)}f'(U_0)Z_i^{\epsilon}\hat \eta_i dy+ O(1)\epsilon\sum_{m=1}^k\vert e^{(i)}_{m,\epsilon}\vert,
\end{align*}
where we use the fact that $\Vert\phi\Vert_{*}=O(\epsilon)$ due to Proposition \ref{prop2}.  Similarly, the right hand side of (\ref{eigeneq}) can be written as
\begin{align*}
\lambda_i\int_{\Omega_{\epsilon}}\Psi_i\hat\eta_idy=\lambda_i e_{i,\epsilon}\int_{B_{R_{i,\epsilon}}(P_i)}Z_i^{\epsilon}\hat\eta_{i}dy +O(1)\epsilon^2\sum_{m=1}^k\vert e^{(i)}_{m,\epsilon}\vert,
\end{align*}
where $\int_{B_{R_{i,\epsilon}}(P_i)}Z_i^{\epsilon}\hat\eta_{i}dy$ is finite due to the boundedness of $\int_{\Omega_{\epsilon}}Z_i^{\epsilon} dy$.  In conclusion, (\ref{eigeneq}) can be simplified as 
\begin{align}\label{312}
e_{i,\epsilon}\int_{B_{R_{i,\epsilon}}(P_i)}f'(U_0)Z_i^{\epsilon}\hat \eta_i dy=\lambda_i e_{i,\epsilon}\int_{B_{R_{i,\epsilon}}(P_i)}Z_i^{\epsilon}\hat\eta_{i}dy+ O(1)\epsilon\sum_{m=1}^k\vert e^{(i)}_{m,\epsilon}\vert.
\end{align}
It is easy to see that $e_{i,\epsilon}\not=0$ since the definition of eigen-functions.  Now, it is left to discuss $I_2^{(i)}$ defined by
$$I_2^{(i)}=\int_{B_{R_{i,\epsilon}}(P_i)}f'(U_0)Z_i^{\epsilon}\hat \eta_i dy.$$
Recall that
\begin{align*}
Z_m^{\epsilon}:=e^{\frac{1}{\epsilon^2}(A-A_m)}=e^{-\sum\limits_{i=1}^n\frac{1}{2}h_m^{(i)}\big(y_m^{(i)}\big)^2+O(1)\epsilon\vert y_m\vert^3}=e^{a_m(y_m)}+O(1)\epsilon e^{-\hat\sigma_9\vert y_m\vert^2}
\end{align*}
for some small $\hat\sigma_9>0$, and 
\begin{align*}
 U_{0}=\sum_{m=1}^k c_{m,0} e^{a_m(y_m)}+O(\epsilon)\sum_{m=1}^k e^{a_m(y_m)},
 \end{align*}
 where $c_{m,0}:=c_0$ for $m\in \mathcal I_{s}$ and $c_{m,0}:=0$ for $m\not\in \mathcal I_s$, then one has
\begin{align*}
I_2^{(i)}=&\int_{B_{R_{i,\epsilon}}(P_i)}f'\big(c_{i,0}e^{a_i(y_i)}\big)e^{a_i(y_i)}\hat \eta_idy+O(1)e^{-\hat\sigma_8\frac{\min_{m\not=i}\vert x_m-x_i\vert^2}{\epsilon^2}}\sum_{m=1}^k\vert e^{(i)}_{m,\epsilon}\vert+O(1)\epsilon\\
=&\int_{\mathbb R^n}f'\big(c_{i,0}e^{a_i(y_i)}\big)e^{a_i(y_i)}dy+\int_{B_{R_{\epsilon}}(P_i)}f'\big(c_{i,0}e^{a_i(y_i)}\big)e^{a_i(y_i)}\hat \eta_idy-\int_{\mathbb R^n}f'\big(c_{i,0}e^{a_i(y_i)}\big)e^{a_i(y_i)}dy\\
&+O(1)e^{-\hat\sigma_8\frac{\min_{m\not=i}\vert x_m-x_i\vert^2}{\epsilon^2}}\sum_{m=1}^k\vert e^{(i)}_{m,\epsilon}\vert+O(1)\epsilon\\
=&\int_{\mathbb R^n}f'\big(c_{i,0}e^{a_i(y_i)}\big)e^{a_i(y_i)}dy+O(1)\epsilon\sum_{m=1}^k\vert e^{(i)}_{m,\epsilon}\vert.
\end{align*}
Denote $h(\xi)$ as
$$h{(\xi)}:=\xi[-2^{\frac{n}{2}}\xi^2+3^{\frac{n}{2}}(1+\theta)\xi-6^{\frac{n}{2}}\theta],$$
then we have $h(c_{i,0})=\int_{\mathbb R^n}f\big(c_{i,0} e^{a_i(y_i)}\big)dy.$  Furthermore, $I_2^{(i)}$ becomes
\[I_2^{(i)}=h'(c_{i,0})+O(1)\epsilon\sum_{m=1}^k\vert e^{(i)}_{m,\epsilon}\vert.\]
Now, (\ref{312}) can be rewritten as
\begin{align}\label{lambda0}
\lambda_i =\alpha_0h'(c_{i,0})+ O(1)\epsilon\sum_{m=1}^k\vert e^{(i)}_{m,\epsilon}\vert=\alpha_0h'(c_{i,0})+ O(1)\epsilon,
\end{align}
where $\alpha_0:=\frac{1}{\int_{\mathbb R^n}e^{-\vert y\vert^2}dy}=\pi^{-\frac{n}{2}}$ is a positive constant and $i=1,2,\cdots,k$.  Combine (\ref{Psi0}) and (\ref{lambda0}), then our proof is finished.
\end{proof}
We have established the bounded eigenvalue estimates of the operator $\mathscr L_{\epsilon}$ in Proposition \ref{prop3}.  Our next goal is to study the local linearized stability of interior spikes given by (\ref{u0}) via these estimates.
\subsection{Stability Analysis}
It is well known that the stability properties are determined by the signs of the principal eigenvalues.  On the other hand, the asymptotics (\ref{muasy}) indicate that the signs depend on the height of every bump defined by $c_0$.  Therefore, there are several cases for the stability properties in terms of $c_0$, which are summarized as:
\begin{proposition}\label{prop4}
Assume all conditions in Theorem \ref{thm11} hold, then the steady states given by (\ref{u0}) satisfy the following alternatives:
\begin{itemize}
    \item[(i).] when $c_{m,0}$ are either $c_{01}$ or $0$ for any $m=1,\cdots,k$, steady states (\ref{u0}) are linear stable;
    \item[(ii).]  when there exists some $m$ such that $c_{m,0}=c_{02}$, steady states (\ref{u0}) are unstable;
\end{itemize}
\end{proposition}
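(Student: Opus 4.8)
The plan is to read off the sign of each eigenvalue directly from the asymptotic formula $\lambda_i=\alpha_0 h'(c_{i,0})+O(1)\epsilon$ established in Proposition \ref{prop3}, after splitting the spectrum of $\mathscr L_\epsilon$ into the $k$ ``small'' eigenvalues tracked by that proposition and the remaining ``large'' part. Since $\alpha_0=\pi^{-n/2}>0$ and $\epsilon=\chi^{-1/2}\to 0$ as $\chi\to\infty$, for $\chi$ large the sign of each $\lambda_i$ is governed entirely by the sign of $h'(c_{i,0})$, provided $\epsilon$ is small enough that the $O(1)\epsilon$ remainder cannot flip it. Thus the crux of the argument is to determine $\sgn h'$ at the three admissible values $c_{i,0}\in\{c_{01},c_{02},0\}$.

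First I would factor $h$. Recalling $h(\xi)=\xi[-2^{\frac n2}\xi^2+3^{\frac n2}(1+\theta)\xi-6^{\frac n2}\theta]$ and comparing the bracketed quadratic with (\ref{quad}), whose two roots are $c_{01}>c_{02}>0$, one obtains the factorization $h(\xi)=-2^{\frac n2}\,\xi(\xi-c_{01})(\xi-c_{02})$. Differentiating and evaluating at each admissible value (where two of the three product terms vanish) gives $h'(c_{01})=-2^{\frac n2}c_{01}(c_{01}-c_{02})<0$, $h'(c_{02})=-2^{\frac n2}c_{02}(c_{02}-c_{01})>0$, and $h'(0)=-2^{\frac n2}c_{01}c_{02}<0$, where each inequality uses only $c_{01}>c_{02}>0$. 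Consequently $\lambda_i<0$ whenever $c_{i,0}\in\{c_{01},0\}$ and $\lambda_i>0$ whenever $c_{i,0}=c_{02}$, once $\epsilon$ is taken sufficiently small.

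The remaining issue, which I expect to be the main obstacle, is the ``large'' part of the spectrum, namely the eigenvalues not captured by Proposition \ref{prop3}. These correspond to eigenfunctions essentially orthogonal to the approximate kernel $\mathcal K_\epsilon=\text{span}\{Z_m^\epsilon\}$. On $\mathcal K_\epsilon^\perp$ the coercivity estimate of Proposition \ref{prop31} shows that the leading operator $L_\epsilon$ is invertible with norm bounded uniformly in $\epsilon$; since the reaction contribution $\epsilon^2 f'(U_s)$ is a relatively small perturbation, the associated eigenvalues $\mu$ of $\mathscr L_\epsilon$ in (\ref{psieq}) stay bounded away from zero and strictly negative. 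After the rescaling $\lambda=\mu/\epsilon^2$ these eigenvalues tend to $-\infty$ and hence never destabilize the spike. The delicate point is to convert the static invertibility estimate of Proposition \ref{prop31} into a genuine spectral gap for (\ref{psieq}), uniform in $\epsilon$, so that only the $k$ eigenvalues of Proposition \ref{prop3} can approach zero.

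Combining the two parts finishes the proof. In case (i), every $c_{m,0}$ lies in $\{c_{01},0\}$, so all $k$ small eigenvalues are negative, and together with the uniformly negative large spectrum we conclude that the steady state (\ref{u0}) is linearly stable. In case (ii), some $c_{m,0}=c_{02}$ yields a positive eigenvalue $\lambda_i=\alpha_0 h'(c_{02})+O(1)\epsilon>0$, and hence the steady state is unstable. The $\sgn h'$ computation is elementary once the factorization of $h$ is in hand, so essentially all of the analytic difficulty resides in the uniform spectral-gap estimate for the large eigenvalues.
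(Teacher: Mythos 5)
Your proposal follows essentially the same route as the paper: both rest on the eigenvalue asymptotics $\lambda_i=\alpha_0 h'(c_{i,0})+O(1)\epsilon$ of Proposition \ref{prop3} and then read off stability from the sign of $h'$ at $0$, $c_{01}$, $c_{02}$. Your factorization $h(\xi)=-2^{\frac n2}\xi(\xi-c_{01})(\xi-c_{02})$ and the resulting signs $h'(0)<0$, $h'(c_{01})<0$, $h'(c_{02})>0$ are correct (the paper merely asserts these signs as ``facts''), and your case (ii) argument coincides with the paper's construction of a positive eigenvalue. The one genuine divergence is the treatment of the rest of the spectrum. You propose to derive a uniform spectral gap on $\mathcal K_\epsilon^\perp$ from the invertibility estimate of Proposition \ref{prop31} plus perturbation by $\epsilon^2 f'(U_s)$, and you correctly flag the conversion from the $*$-norm resolvent bound to a weighted-$L^2$ spectral gap as the delicate, unfinished step. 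The paper never makes such an argument: it disposes of the issue through the a priori estimate at the start of Section \ref{sec3} (any eigenvalue that could threaten stability, i.e.\ one bounded below, is bounded and has an eigenfunction with bounded $\mathcal Y$-norm) combined with the compactness argument inside the proof of Proposition \ref{prop3}, which forces every such bounded eigen-pair to converge to a combination of the kernels $Z_m$ and hence to obey the asymptotic formula; consequently the $k$ tracked eigenvalues exhaust all candidates for non-negative eigenvalues and are automatically the principal ones. Both routes lead to the same conclusion; yours is closer to the standard Lyapunov--Schmidt stability argument but would require actually proving the weighted Poincar\'e/spectral-gap inequality you identified, whereas the paper's route extracts the needed exclusion from machinery it has already established.
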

\begin{proof}
According to Proposition \ref{prop3}, we have for any $i=1,\cdots,k,$ $\lambda_i$ satisfies
\begin{align}\label{315}
\lambda_i =\alpha_0h'(c_{i,0})+ O(1)\epsilon,
\end{align}
where $\alpha_0=\pi^{-\frac{n}{2}}$ and $h$ is given by
$$h{(\xi)}:=\xi[-2^{\frac{n}{2}}\xi^2+3^{\frac{n}{2}}(1+\theta)\xi-6^{\frac{n}{2}}\theta].$$

Before proving this proposition, we state some properties satisfied by $h$. First of all, $h(\xi)$ admits three distinct zeros denoted by $0$, $c_{01}$ and $c_{02}$, since $\theta$ is assumed to satisfy  $\theta\in(0,\theta_1)$ where $\theta_1$ is given by (\ref{theta1}).  Next, we have the facts that $h'$ at $0$ and $c_{01}$ are negative but at $c_{02}$ are positive.  We shall prove the statements in case (i) and case (ii), respectively.    

In case (i), one supposes that $c_{i,0}=c_{01}$ or $c_{i,0}=0$ for any $i=1,\cdots,k$.  Noting that (\ref{315}) and the properties of $h$, one finds $\lambda_i<0$ for small $\epsilon$.  In other words, the principal eigenvalues of the operator $\mathscr L_{\epsilon}$ are negative.  Therefore, when $\epsilon\ll 1$, steady states (\ref{u0}) are linear stable.

In case (ii), we claim that there exists some eigen-pair $<\mu_*,\Psi_*>$ within (\ref{psieq}) such that $\mu_*>0$ for sufficiently small $\epsilon$.  Indeed, with the help of Proposition \ref{prop3}, one can choose $i=m$ such that
$$\Psi_m=\sum_{j=1}^k\big[e^{(m)}_{j,0}+o(1)\big]Z_j+o(1),$$
where $e^{(m)}_{m,0}=O(1)$ and $e^{(m)}_{j,0}=o(1)$ for $j\not=m.$  Moreover, the corresponding $\mu_m$ satisfies
$$\mu_m =\epsilon^2\alpha_0h'(c_{m,0})+ O(1)\epsilon^3.$$
According to the properties satisfied by $h$, we have $h'(c_{m,0})>0$.  This conclusion in conjunction with (\ref{muasy}) implies $\mu_m>0$ when $\epsilon$ is sufficiently small.  Now, we indeed show that when $\epsilon\ll 1,$ there exists the eigen-function $\Psi_*:=\Psi_m$ with some coefficient $e^{(m)}_{m,0}=O(1)$ such that the associated eigenvalue $\mu_{*}$ satisfies $\mu_*>0$, which is our claim. 

Due to the definition of $\lambda_*$, one further obtains that $\lambda_*>0$ now that $\epsilon$ is sufficiently small.  As a consequence, steady states (\ref{u0}) are unstable. 
\end{proof}
\begin{remark}
When $\epsilon$ is small, the $k$ linear independent eigen-functions $\Psi_i,$ $i=1,2,\cdots,k$ defined in Proposition \ref{prop3} are approximate orthogonal, and thereby the coefficient matrix $\Big(e^{(m)}_{j}\Big)_{1\leq m,j\leq k}$ is ``nearly diagonal".
\end{remark}
Proposition \ref{prop4} implies that when $\epsilon\ll 1$, (\ref{origin}) admits many stable and unstable interior spikes.  In particular, the existence of bumps with the small positive heights will cause steady states to become unstable.  We would like to mention that the alternatives in Proposition \ref{prop4} also hold for sufficiently large $\chi$ since $\epsilon:=\frac{1}{\sqrt{\chi}}$, which immediately finishes the proof of Theorem \ref{thm12}.  Figure \ref{singledynamics}, \ref{unstabledynamics}, \ref{multidynamic1} and \ref{multiunstable} support our theoretical results in Theorem \ref{thm12}.

One can observe from our analysis that the source $f(u)$ plays the key role in stabilizing interior spikes for (\ref{origin}).  Indeed, the lack of source term in other reaction-advection-diffusion systems such as the minimal Keller--Segel models typically leads to the instability of multi-spike solutions.  For the results of the minimal Keller--Segel models, we refer readers to \cite{KW2021}.  

The single equation served as a paradigm to model the evolution of the single species admits some interesting patterns.   However, more interesting phenomena are discovered while studying the interaction effects among multiple species.  

\section{Population Competition Models}\label{sec4}
To investigate the coexistence of two interacting species, we shall consider several specific forms of (\ref{14}).  First of all, we denote $u$ and $v$ as the densities of two competing species, then suppose $\chi_1=\chi$, $\chi_2=1$ and the boundary condition is no-flux in (\ref{14}) to obtain
\begin{align}\label{IFD}
\left\{\begin{array}{ll}
u_t=\nabla\cdot(\nabla u-\chi u\nabla A)+ug(x,u+v),&x\in\Omega,t>0,\\
v_t=\nabla\cdot(\nabla v-v\nabla A)+vg(x,u+v),&x\in\Omega,t>0,\\
\partial_{\textbf{n}} u-\chi u\partial_{\textbf{n}}A=\partial_{\textbf{n}}v-v\partial_{\textbf{n}}A=0,&x\in\partial \Omega,t>0,\\
u(x,0)=u_0(x)\geq 0, v(x,0)=v_0(x)\geq 0,&x\in\Omega,
\end{array}
\right.
  \end{align}
 where $v$-species follows the IFD strategy, whereas $u$-species takes the aggressive strategy with the higher speed.  We are interested at the existence of steady states and their local dynamics to (\ref{IFD}) suject to Allee effect.  In particular, we focus on the effect of IFD strategy on the stability of steady states.
 
 We next set $\chi_1=\chi$, $\chi_2=c\chi$ and the boundary condition is no-flux in (\ref{14}) to get
  \begin{align}\label{AA}
\left\{\begin{array}{ll}
u_t=\nabla\cdot(\nabla u-\chi u\nabla A)+ug(x,u+v),&x\in\Omega,t>0,\\
v_t=\nabla\cdot(\nabla v-c\chi v\nabla A)+vg(x,u+v),&x\in\Omega,t>0,\\
\partial_{\textbf{n}} u-\chi u\partial_{\textbf{n}}A=\partial_{\textbf{n}}v-c\chi v\partial_{\textbf{n}}A=0,&x\in\partial \Omega,t>0,\\
u(x,0)=u_0(x)\geq 0, v(x,0)=v_0(x)\geq 0,&x\in\Omega,
\end{array}
\right.
  \end{align}
  where $c>1$ is a constant.  The system (\ref{AA}) subject to Allee effect can be used to describe the interaction between two aggressive species and we will also focus on the concentration phenomena within (\ref{AA}) and its dynamics.
 

\subsection{IFD Strategy Versus Aggressive Strategy}
One of our central problems in Section \ref{sec4} is the effect of IFD on the survival of species.  To investigate it, we first consider the stationary problem of (\ref{IFD}) over the heterogeneous environment, which is
  \begin{align}\label{SSS}
\left\{\begin{array}{ll}
0=\nabla\cdot(\nabla u-\chi u\nabla A)+ug(x,u+v),&x\in\Omega,\\
0=\nabla\cdot(\nabla v-v\nabla A)+vg(x,u+v),&x\in\Omega,\\
\partial_{\textbf{n}} u-\chi u\partial_{\textbf{n}}A=\partial_{\textbf{n}}v-v\partial_{\textbf{n}}A=0,&x\in\partial \Omega,
\end{array}
\right.
  \end{align}
  where the spatial region $\Omega \subset \mathbb R^n$, $n\geq 1$ is a bounded domain with smooth boundary $\partial\Omega$.  Here $\chi$ measures the speed of the aggressive species and the signal $A$ reflects the variations of directed dispersal magnitude.  Recall the growth pattern $g$ is defined as Allee effect form, which is
  $$g(x,u+v)=(u+v-\theta)(r-u-v),$$ 
  where $r=r(x)$ is the fixed external resources.  To show the IFD strategy is a local invader strategy, the invasive species $v$ is supposed to take the IFD strategy, i. e. the $v$-equation in (\ref{SSS}) should admit the solution $v=r$ with $A$ being $A=\ln r$.  Therefore, we obtain $(0,r(x))$ is a non-constant steady state to (\ref{SSS}).  This equilibrium represents the invasive species eventually wins and the established species is extinct.  
\subsubsection{Existence of Non-constant Steady States}
In this section, we construct other non-trivial steady states and our results are summarized as follows:
\begin{proposition}\label{prop41}
Assume all conditions in Theorem \ref{thm11} hold.  Let $A=\ln r(x)$ and $g(x,u+v)=(u+v-\theta)(r-u-v)$ in (\ref{SSS}), then we have when $\chi\gg 1,$ for $\theta$ sufficiently small, (\ref{SSS}) only admits the following two types of spiky solutions:
\[(u_s,v_s)=(0,r(x));~~~(u_s,v_s)=(u^*,0).\]
Here $u^*$ is given by 
\begin{align}\label{ustarprop41}
 u^*(x;\chi)=\sum_{m=1}^k\bar c_{m,\chi} e^{-\frac{1}{2}\sum\limits_{i=1}^nh_m^{(i)}\chi(x-x^{(i)}_m)^2}+o(1),
 \end{align}
 where $o(1)\rightarrow 0$ uniformly as $\chi \rightarrow \infty.$  In particular, if $m\in\mathcal I_b$, $\bar c_{m,\chi}=\bar c_{01}+O(\frac{1}{\sqrt{\chi}})$; if $m\in \mathcal I_s\backslash\mathcal I_{b},$ $\bar c_{m,\chi}=\bar c_{02}+O(\frac{1}{\sqrt{\chi}})$; if $m\not\in\mathcal I_{s},$ $\bar c_{m,\chi}=O(\frac{1}{\sqrt{\chi}})$, where $\bar c_{01}$ and $\bar c_{02}$ are defined in (\ref{barc0}).
\end{proposition}
\begin{proof}
 We have the fact that $(0,r(x))$ is a non-trivial solution of (\ref{SSS}).  To find other steady states, we first let $v=0$ in (\ref{SSS}), then the system are simplified into the following form:
 \begin{align}\label{44}
 \left\{\begin{array}{ll}
 0=\nabla\cdot(\nabla u-\chi u\nabla A)+ug(u),&x\in\Omega,\\
 \partial_{\textbf{n}} u-\chi u\partial_{\textbf{n}}A=0,&x\in\partial\Omega,
 \end{array}
 \right.
 \end{align}
 where $g(u)=(u-\theta)(r-u).$  Noting that $A_m$ is the $m$-th non-degenerate maximum of $A$, we define $\bar a_m:=e^{A_m}$, $\delta_1:= 3^n \theta^2 + (2\cdot 3^n \bar a_m- 4\cdot 12^{\frac{n}{2}}  \bar a_m) \theta+3^n \bar a_m^2$ and
\begin{align*}
\bar\theta_1:=\bar a_m\cdot\frac{2^{n+1}- 3^{\frac{n}{2}}- 2\sqrt{4^{  n} - 2^{n}\cdot3^{\frac{ n}{2}}}}{3^\frac{n}{2}}\in (0,\bar a_m),
\end{align*}
then we obtain by Theorem \ref{thm11} that for $\theta\in(0,\bar\theta_1)$, there exists a non-constant solution $u^*$ to (\ref{44}) with the leading order term is
 \begin{align}\label{ansatz1}
u^*_0=\sum_{m=1}^k \bar c_{m,\chi} e^{\chi(A-A_m)}\eta_m,
\end{align}
where $\bar c_{m,\chi}:=\bar c_0+O\big(\frac{1}{\sqrt{\chi}}\big)$ and $\bar c_0$ is either $\bar c_{01}$ or $\bar c_{02}$ which are given by
 \begin{align}\label{barc0}
 \bar c_{01}:=\frac{3^{\frac{n}{2}}\big(\bar a_m+\theta\big)+ \sqrt{\delta_1}}{ 2^{\frac{n}{2}+1}},\quad
\bar c_{02}:=\frac{3^{\frac{n}{2}}\big(\bar a_m+\theta\big)-\sqrt{\delta_1}}{ 2^{\frac{n}{2}+1}}.
    \end{align}
 Therefore, with the help of Theorem \ref{thm11}, for each fixed integer $k\geq 1$, we have the solution $u^*$ satisfying the form of \eqref{ustarprop41}.  Similarly, noting that $\bar c_{01}$ and $\bar c_{02}$ satisfies (\ref{barc0}), one finds $\bar c_{m,\chi}=\bar c_{01}+O(\frac{1}{\sqrt{\chi}})$ for $m\in\mathcal I_{b}$; $\bar c_{m,\chi}=\bar c_{02}+O(\frac{1}{\sqrt{\chi}})$ for $m\in\mathcal I_{s}\backslash\mathcal I_b$ and $\bar c_{m,\chi}=O(\frac{1}{\sqrt{\chi}})$ for $m\not\in\mathcal I_{s}$.    
 
Next, we assume $u=0$ and let $v=Cr$ with $C>0,\not=1$ being some undetermined constant.  Then we substitute them into the $v$-equation of (\ref{SSS}) and integrate it to obtain that $C$ satisfies
 \begin{align*}
\int_{\Omega}v(v-\theta)(r-v)d x=C(1-C)\int_{\Omega}r^2(Cr-\theta)dx=0.
\end{align*}
By solving it, one has $C=\beta\theta$ with $\beta:=\frac{\int_{\Omega}r^2 dx}{\int_{\Omega}r^3dx}$ for $\theta\in\big(0,\frac{1}{\beta}\big)$.  Hence, $(0,\beta\theta r)$ is a potential solution to (\ref{SSS}).  However, while substituting $v=\beta \theta r$ into the $v$-equation again, we find 
$$r\equiv \frac{\theta}{C}=\frac{\int_{\Omega}r^3 dx}{\int_{\Omega}r^2dx}~~\text{in~~}\Omega,$$
which implies $r$ must be a constant.  It is contradicted to the fact that $r=e^A$ is non-constant since assumption (A1) states $A$ is spatially heterogeneous.  Therefore, when $u=0$, we obtain $v$ only has the form $(0,r(x))$.

 
 
 To study the coexistence of steady states, we proceed to further analyze the balancing conditions.  If $v=r$, we claim that for $\theta\in(0,\bar a_m)$, (\ref{SSS}) does not admit the $u$-solution with the height is $O(1)$.  To this end, we focus on the following integral constraint satisfied by $u$:
 \begin{align}\label{48}
 \int_{\Omega}u(r-u-v)(u+v-\theta)dx=-\int_{\Omega}u^2(r+u-\theta)dx=0.
\end{align}
 The ansatz of $u$ is similar as in (\ref{ansatz1}), which is  
  \begin{align}\label{ansatz2}
u_0=\sum_{m=1}^k \tilde c_{m,\chi} e^{\chi(A-A_m)}\eta_m,
\end{align}
where $\tilde c_{m,\chi}=\tilde c_0+O(\frac{1}{\sqrt{\chi}})$.  Upon Substituting (\ref{ansatz2}) into (\ref{48}), we find that $\tilde c_0$ satisfies
\begin{align}\label{410}
\tilde c_0\int_{\mathbb R^n}e^{-3\vert y\vert^3}dy+\bar a_m\int_{\mathbb R^n} e^{-2\vert y\vert^2}dy-\theta \int_{\mathbb R^n} e^{-2\vert y\vert^2}dy=0.
\end{align}
Since $\theta\in(0,\bar a_m)$, it follows that (\ref{410}) does not admit any positive root $\bar c_0$.  This finishes our claim.  Therefore, when $v=r$, there is the only one steady state $(0,r)$ to (\ref{IFD}) for $\theta\in(0,\bar a_m)$.

We further claim that there exists $\theta^*>0$ such that for any $\theta\in(0,\theta^*)$, (\ref{SSS}) does not admit the positive spiky solution $(u^*_s,v_s^*)$.  If not, we denote $(u_s^*,v_s^*)$ as the steady states to (\ref{IFD}), then we have the following balancing conditions:
\begin{align}\label{twobalance}
\left\{\begin{array}{ll}
\int_{\Omega} u_s^*(r-u_s^*-v_s^*)(u_s^*+v_s^*-\theta)dx=0,\\
\int_{\Omega} v_s^*(r-u_s^*-v_s^*)(u_s^*+v_s^*-\theta) dx=0.
\end{array}
\right.
\end{align}
Similarly, we consider the leading order term of $u_s^*$ is $u_0^*=\tilde c_{0}e^{-\frac{1}{2}\sum\limits_{i=1}^nh_m^{(i)}\chi(x-x^{(i)}_m)^2}$ and $v_s^*$ has the form of $v_s^*=\tilde Cr$, where $\tilde C$ is a constant needs to be determined.  We substitute them into (\ref{twobalance}) to obtain
\begin{align}\label{413}
&\frac{(1-\tilde C)\tilde c_0}{\sqrt{\Pi_{i=1}^n{h_i}}\chi^{\frac{n}{2}}}\int_{\Omega_{\chi}}e^{-y^2-\frac{2}{\chi}\vert y\vert^2}dy+\frac{1}{\bar a_m}\int_{\Omega}(\tilde C r-\theta)(1-\tilde C)r^2dx\nonumber\\
&-\frac{\tilde c_0^2}{\sqrt{\Pi_{i=1}^n{h_i}}\chi^{\frac{n}{2}}}\int_{\Omega_{\chi}}e^{-2\vert y\vert^2-\frac{1}{\chi}\vert y\vert^2}dy-\frac{\tilde C\tilde c_0 \bar a_m}{\sqrt{\Pi_{i=1}^n{h_i}}\chi^{\frac{n}{2}}} \int_{\Omega_{\chi}}e^{-\vert y\vert^2}dy+\frac{\theta\bar c_0}{\chi^{\frac{n}{2}}\sqrt{\Pi_{i=1}^n{h_i}}}\int_{\Omega_{\chi}}e^{-\vert y\vert^2}dy=0.
\end{align}
Letting $\chi\rightarrow \infty,$ one further has
$$\int_{\Omega}(\tilde C r-\theta)(1-\tilde C)r^2dx=0,$$
which implies $\tilde C=\beta\theta$ if $\tilde C\not=1.$  We substitute $\tilde C=\beta\theta$ into (\ref{413}) again to conclude that
$\tilde c_0=2^{\frac{n}{2}}[1-(\bar a_m+1)\tilde C+\theta],$ which indicates $\tilde c_0\rightarrow 2^{\frac{n}{2}}$ as $\theta\rightarrow 0^+$.  Similarly, we calculate from the second equation in (\ref{twobalance}) that
$$2^{\frac{n}{2}}\tilde c_0^2-3^{\frac{n}{2}}\tilde c_0[(1-2\beta\theta)\bar a_m+\theta]+\theta(1-\beta\theta)(1-\beta)\bar a_m6^{\frac{n}{2}}=0.$$
In this way, we find $\tilde c_0\rightarrow \big(\frac{3}{2}\big)^{\frac{n}{2}}\bar a_m$ as $\theta \rightarrow 0^+,$ which reaches the contradiction.  This shows that one can find $\theta^*>0$ such that for any $\theta\in(0,\theta^*),$ (\ref{twobalance}) does not admit any positive solution $(\tilde c_0,\tilde C)$.  This completes the proof of our claim.  Proposition \ref{prop41} follows from summarizing our above arguments.
\end{proof}
After showing the existence of steady states in Theorem \ref{thm13}, we shall discuss their local dynamics. 
 
\subsubsection{Linearized Eigenvalue Problem and Stability Analysis}
To analyze the stability of steady states given in Proposition \ref{prop41}, we assume
\[u(x,t)=u_s(x)+\varepsilon e^{\lambda t} \varphi(x),\quad v(x,t)=v_s(x)+\varepsilon e^{\lambda t}\psi(x),\]
where $(u,v)$ is defined as the solution of (\ref{IFD}).  Upon substituting this into (\ref{IFD}), we obtain the following linearized eigenvalue problem
  \begin{align}\label{EPS}
\left\{\begin{array}{ll}
\lambda \varphi=\nabla\cdot(\nabla \varphi-\chi \varphi\nabla A)+u_sg_u\vert_{(u_s+v_s)}\varphi+u_sg_v\vert_{(u_s+v_s)}\psi+g\vert_{(u_s+v_s)}\varphi,&x\in\Omega,\\
\lambda \psi=\nabla\cdot(\nabla \psi-\psi\nabla A)+v_sg_v\vert_{(u_s+v_s)}\psi+v_sg_u\vert_{(u_s+v_s)}\varphi+g\vert_{(u_s+v_s)}\psi,&x\in\Omega,\\
\partial_{\textbf{n}} \varphi-\chi \varphi\partial_{\textbf{n}}A=\partial_{\textbf{n}}\psi-\psi\partial_{\textbf{n}}A=0,&x\in\partial \Omega.
\end{array}
\right.
  \end{align}
  By analyzing (\ref{EPS}) around steady states defined in Theorem \ref{thm13}, we obtain the following proposition:
  \begin{proposition}\label{prop42}
Assume all conditions and conclusions in Proposition \ref{prop41} hold.  Then we have when $\theta\in\big(0,\theta^{**}\big)$ with $\theta^{**}:=\min\Big\{\frac{\int_{\Omega}r^3 dx}{\int_{\Omega}r^2dx},\theta_1\Big\}$ and $\theta_1$ being defined in \eqref{theta1}, $(0,r)$ is linearly stable.  Moreover, if $\theta$ is independent of $\chi,$ the following alternatives hold for $(u^*,0)$:
\begin{itemize}
\item[(i).] if $\bar c_{m,\chi}=\bar c_{01}+O\big(\frac{1}{\sqrt{\chi}}\big)$ for all $m\in \mathcal I_{s}$, the solution will be linearly stable;
\item[(ii).] if $\bar c_{m,\chi}=\bar c_{02}+O\big(\frac{1}{\sqrt{\chi}}\big)$ for some $m$, the solution will be unstable;
\end{itemize}
however, if $\theta$ depends on $\chi$ and satisfies $\theta\in (0,\varepsilon^*_1/\chi^{\frac{n}{2}})$ with $\varepsilon_1^*$ defined by (\ref{varepsilonstar}), $(u^*,0)$ will be always unstable.
  \end{proposition}
  \begin{proof}
After considering $(u_s,v_s)=(0,r(x))$, (\ref{EPS}) becomes
  \begin{align}\label{416}
\left\{\begin{array}{ll}
\lambda \varphi=\nabla\cdot(\nabla \varphi-\chi \varphi\nabla A),&x\in\Omega,\\
\lambda \psi=\nabla\cdot(\nabla \psi-\psi\nabla A)+r(\theta-r)(\varphi+\psi),&x\in\Omega,\\
\partial_{\textbf{n}} \varphi-\chi \varphi\partial_{\textbf{n}}A=\partial_{\textbf{n}}\psi-\psi\partial_{\textbf{n}}A=0,&x\in\partial \Omega.
\end{array}
\right.
  \end{align}
We integrate the $\varphi$-equation in (\ref{416}), then obtain from $\lambda\not=0$ that $\int_{\Omega}\varphi dx=0.$
Furthermore, it follows from the same argument in Section \ref{sec3} that $\lambda$ is bounded.  
Then, we define $y=\frac{x}{\epsilon}$, $r(x)=\tilde r(y)$, $\varphi(x)=\varPhi(y)$ and $\psi(x)=\varPsi(y)$ to get (\ref{416}) has the following form:
    \begin{align}\label{ep421}
\left\{\begin{array}{ll}
\epsilon^2\lambda \varPhi=\nabla_y\cdot\Big(\nabla_y \varPhi-\frac{1}{\epsilon^2} \varPhi\nabla_y A\Big),&y\in\Omega_{\epsilon},\\
\epsilon^2\lambda \varPsi=\nabla_y\cdot(\nabla_y \varPsi-\varPsi\nabla_y A)+\epsilon^2r(\theta-r)(\varPhi+\varPsi),&y\in\Omega_{\epsilon},\\
\partial_{\textbf{n}} \varphi-\frac{1}{\epsilon^2} \varphi\partial_{\textbf{n}}A=\partial_{\textbf{n}}\psi-\psi\partial_{\textbf{n}}A=0,&x\in\partial \Omega.
\end{array}
\right.
  \end{align}
  Since $\lambda$ is bounded, we have from Proposition \ref{prop3} that $\varPhi=C_2 e^{-\frac{1}{\epsilon^2} A}+o(1)$ and $\varPsi=C_3e^{-A}+o(1)$ for $\epsilon$ small, where $C_2,C_3>0$ are constants.  Thanks to $\int_{\Omega}\varphi dx=0,$ one finds from $\varPhi=C_2 e^{-\frac{1}{\epsilon^2} A}+o(1)$ that $C_2=0$, which implies $\varPhi=0$ is a solution to the $\varPhi$-equation in (\ref{ep421}).  Let $\varPhi=0$ in the $\varPsi$-equation of (\ref{ep421}) and integrate it to get
 \begin{align*}
\lambda\int_{\Omega_{\epsilon}}\varPsi dy=\int_{\Omega_{\epsilon}}\tilde r(\theta-\tilde r)\varPsi dy.
 \end{align*}
By noting that $\varPsi=C_3 \tilde r+o(1)$, we further have 
 $$\lambda=\frac{\theta\int_{\Omega}r^2dx-\int_{\Omega} r^3dx}{\int_{\Omega} rdx}+O(\epsilon),$$
 which indicates that $\lambda<0$ for $\epsilon$ small due to $\theta\in\Big(0,\frac{\int_{\Omega}r^3 dx}{\int_{\Omega}r^2dx}\Big).$  Thus, the steady state $(0,r(x))$ is linear stable. 
 
 
 We next discuss the stability of the steady state $(u^*,0)$ defined in Theorem \ref{thm13}.  Note that the associated linearized problem is
   \begin{align}\label{EPS1}
\left\{\begin{array}{ll}
\lambda \varphi=\nabla\cdot(\nabla \varphi-\chi \varphi\nabla A)+u_s^*g_u(x,u_s^*)\varphi+u_s^*g_v(x,u_s^*)\psi+g(x,u_s^*)\varphi,&x\in\Omega,\\
\lambda \psi=\nabla\cdot(\nabla \psi-\psi\nabla A)+g(x,u_s^*)\psi,&x\in\Omega,\\
\partial_{\textbf{n}} \varphi-\chi \varphi\partial_{\textbf{n}}A=\partial_{\textbf{n}}\psi-\psi\partial_{\textbf{n}}A=0,&x\in\partial \Omega.
\end{array}
\right.
  \end{align}
Then we have from Proposition \ref{prop3} that there exists $\psi_1=Cr+o(1)$ which satisfies the $\psi$-equation of (\ref{EPS1}), where $C\geq 0$ needs to be determined.  Substitute it into the $\psi$-equation and integrate over $\Omega$ to obtain
  \begin{align}\label{425}
  &\lambda_1 C\int_{\Omega}rdx+\theta C \int_{\Omega} r^2 dx+o(1)\nonumber\\
  =&\sum_{m=1}^k\frac{\epsilon^nC}{{\sqrt{\Pi_{i=1}^n{h_i^m}}}}\Bigg(-\bar c_{0,m}^2\bar a_m\int_{\Omega_{\epsilon}} e^{-2\vert y\vert^2}dy+\bar c_{0,m}\bar a_m^2\int_{\Omega_{\epsilon}}e^{-\vert y\vert^2}dy+\theta\bar c_{0,m}\bar a_m\int_{\Omega_{\epsilon}}e^{-\vert y\vert^2}dy\Bigg).
  \end{align}
When $\theta$ is fixed and independent of $\epsilon$, if $C\not=0$, one finds $\lambda_1\rightarrow -\theta \frac{\int_{\Omega} r^2 dx}{\int_{\Omega} r dx}<0$ as $\epsilon\rightarrow 0.$  As a consequence, $\lambda_1\approx -\theta \frac{\int_{\Omega} r^2 dx}{\int_{\Omega} r dx}$ for $\epsilon$ small.  Otherwise, we suppose $C=0$ which indicates that $\psi=0$ is a solution to the $\psi$-equation.  Moreover, the $\varphi$-equation in (\ref{EPS1}) becomes
$$\lambda \varphi=\nabla\cdot(\nabla \varphi-\chi \varphi\nabla A)+u_s^*g_u(x,u_s^*)\varphi+g(x,u_s^*)\varphi.$$
We analyze this single equation and conclude from Proposition \ref{prop3} that for $i=2,\cdots,k+1$,
$$\varphi_i=\sum_{m=1}^k\big[e^{(i)}_{m,0}+o(1)\big]z_m+o(1),$$
where $z_m(x):=Z_m(y)$ and $e^{(i)}_{m,0}$ are non-negative constants but not identically zero; furthermore,
$$\lambda_i =\pi^{-\frac{n}{2}}\bar h'(\bar c_{i,0})+ O(1)\epsilon,$$
where $\bar c_{i,0}$ is $0$, $\bar c_{01}$ or $\bar c_{02}$; while $\bar h$ is given by
$$\bar h{(\xi)}:=\xi[-2^{\frac{n}{2}}\bar c_0^2+3^{\frac{n}{2}}\big(\bar a_m+\theta\big)\bar c_0-6^{\frac{n}{2}}\theta \bar a_m].$$
According to Proposition \ref{prop4}, when for any $i=2,\cdots,k+1$, $\bar c_{i,0}$ are either $\bar c_{01}$ or $0$, all eigenvalues are strictly negative for $\epsilon$ small; if there exists some $i$ such that $\bar c_{i,0}=\bar c_{02}$, the corresponding eigenvalues satisfy $\lambda_i>0.$  Combining the fact that $\lambda_1<0$ for $\epsilon$ small, one can show the stability properties of $(u^*,0)$.

Focusing on the case that $\theta$ depends on $\chi$, we claim that there exists some $\varepsilon_1^*$ such that when $\theta\in(0,\varepsilon_1^*\epsilon^n)$, the steady state $(u^*,0)$ are unstable.  To prove our claim, we suppose $\theta=\varepsilon_1\epsilon^n$ and substitute it into (\ref{425}) to obtain
\begin{align}\label{426}
  \epsilon^{-n}\lambda_{10}\int_{\Omega}rdx=\sum_{m=1}^l\frac{\pi^{\frac{n}{2}}\bar a_m}{{\sqrt{\Pi_{i=1}^n{h_i^m}}}}\Bigg(-\bar c_{0,m}^2 2^{-\frac{n}{2}}+\bar c_{0,m} \bar a_m\Bigg)+\sum_{m=1}^l\frac{\pi^{\frac{n}{2}}}{{\sqrt{\Pi_{i=1}^n{h_i^m}}}}{\varepsilon_1\epsilon^n \bar c_0 \bar a_m}-\varepsilon_1  \int_{\Omega} r^2 dx,
  \end{align}
 where $\lambda_{10}$ is defined as the leading term of $\lambda_1$.  Our goal is to find all $\varepsilon_1$ such that $\lambda_{10}>0$ for $\epsilon$ small.  To achieve it, we conclude from (\ref{426}) that it is equivalent to verify
 \begin{align}\label{verify1}
  -\bar c_{0,\infty}^2 2^{-\frac{n}{2}}+\bar c_{0,\infty}-\varepsilon_1  \frac{1}{\alpha_1\pi^{\frac{n}{2}}}\int_{\Omega} r^2 dx>0,
  \end{align}
  where $\bar c_{0,\infty}=(\frac{3}{2})^{\frac{n}{2}}$
  and $\alpha_1=\sum_{m=1}^l\frac{{\bar a}^3_m}{{\sqrt{\Pi_{i=1}^n{h_i^m}}}}.$  We define $C_4:=\frac{1}{\alpha_1\pi^{\frac{n}{2}}}\int_{\Omega} r^2 dx$ and $\tilde h(\xi)=-\xi^2 2^{-\frac{n}{2}}+\xi- C_4\varepsilon_1$, then (\ref{verify1}) is equivalent to $\tilde  h(\bar c_{0,\infty})>0$.  Since we have the fact that when $\varepsilon_1\in(0,\frac{2^{n/2}}{4C_4})$, there exist two roots $\xi_1^*$ and $\xi_2^*$ of $\tilde h(\xi)=0$ given by
  \[\xi^*_1:=\frac{1-\sqrt{\delta_2}}{2^{1-\frac{n}{2}}},\quad \xi^*_2:=\frac{1+\sqrt{\delta_2}}{2^{1-\frac{n}{2}}},\]
  where $\delta_2:=1-2^{2-\frac{n}{2}} C_4\varepsilon_1,$
  our desired conclusion is equivalent to find all $\varepsilon_1$ such that $\xi_2^*>\bar c_{0,\infty}$, which is
  $$2\cdot 3^{n/2}<2^n+2^n\sqrt{1-\frac{4}{2^{n/2}}\varepsilon_1C_4}.$$
  By straightforward calculation, we obtain that when $\varepsilon_1\in(0,\varepsilon_1^*)$, $\lambda_{10}>0$ for $\epsilon$ small, where $\varepsilon^*_1$ is defined by
  \begin{align}\label{varepsilonstar}
  \varepsilon_1^*=\frac{2^{n/2}\cdot4^n-2^{n/2}\big(2\cdot 3^{n/2}-2^n\big)^2}{4\cdot 4^{n} C_4},
  \end{align}
  which completes the proof of our claim.  One can summarize the above arguments to show that Proposition \ref{prop42} holds.
\end{proof} 
Theorem \ref{thm13} follows from Proposition \ref{prop41} and Proposition \ref{prop42}.  On the one hand, we have from Theorem \ref{thm13} that when Allee threshold is sufficiently small, the interacting species who follow the aggressive strategy and the IFD strategy can not coexist.  On the other hand, Theorem \ref{thm13} illustrates that the species who follows the IFD strategy is likely to be better off since $(0,r)$ is linearly stable.  It is worthy mentioning that when Allee threshold is small but independent of $\chi$, the final pattern formations really depend on the height of every peak.  Whereas we can observe that in some situations, the aggressive species is extinct finally no matter what heights the peaks have.

 \subsection{Aggressive Strategies and Allee Effect}
 In this section, we focus on the existence and dynamics of spiky steady states to system (\ref{AA}).  From the viewpoint of biological background, (\ref{AA}) indicates that two species both take the aggressive strategy and one of them is the more aggressive.  Similarly, we first investigate the following stationary problem of (\ref{AA}):
  \begin{align}\label{SSfinal}
\left\{\begin{array}{ll}
0=\nabla\cdot(\nabla u-\chi u\nabla A)+ug(x,u+v),&x\in\Omega,\\
0=\nabla\cdot(\nabla v-c\chi v\nabla A)+vg(x,u+v),&x\in\Omega,\\
\partial_{\textbf{n}} u-\chi u\partial_{\textbf{n}}A=\partial_{\textbf{n}}v-c\chi v\partial_{\textbf{n}}A=0,&x\in\partial \Omega,
\end{array}
\right.
  \end{align}
  where $c>1$ is a constant, $\chi$ reflects the speed of the species and $A$ is the signal; while the growth pattern $g$ is given by
  $$g(x,u+v)=(u+v-\theta)(1-u-v).$$ 
To construct non-constant solutions to (\ref{AA}), we first consider the simpler cases, which are $u_s=0$ or $v_s=0$ in every local region, next analyze the coexistence of spiky steady states.
\subsubsection{Existence of Non-constant Steady States}
Our results are summarized as follows:
\begin{proposition}\label{prop43}
Assume all conditions of $A$ shown in Theorem \ref{thm11} hold. 
 Given $g(x,u+v):=(1-u-v)(u+v-\theta)$ in system (\ref{SSfinal}).  Let $k$ be any positive but fixed integer, $\mathcal I_{s1}, \mathcal I_{s2}\subseteq \{1,\cdots,k\}$ be any subset and $\mathcal I_{b1}$, $\mathcal I_{b2}$ be any subset of $\mathcal I_{s1}$ and $\mathcal I_{s2}$, respectively.  Suppose at least one of subsets $\mathcal I_{s1}$ and $\mathcal I_{s2}$ is nonempty, then if $\mathcal I_{s1}\cap\mathcal I_{s2}=\emptyset$, we have when $\chi\gg 1$ and $\theta\in(0,\theta_1)$, there exist the steady states $(\bar u^*,\bar v^*)$ defined as
\begin{align}\label{firstcoexist}
\left\{\begin{array}{ll}
\bar u^*(x;\chi)=\sum\limits_{m=1}^k S^{(1)}_{m,\chi} e^{-\frac{1}{2}\sum\limits_{i=1}^nh_m^{(i)}\chi(x-x^{(i)}_m)^2}+o(1);\\
 \bar v^*(x;\chi)=\sum\limits_{m=1}^k S^{(2)}_{m,\chi} e^{-\frac{c}{2}\sum\limits_{i=1}^nh_m^{(i)}\chi(x-x^{(i)}_m)^2}+o(1),
 \end{array}
 \right.
 \end{align}
 where $o(1)\rightarrow 0$ uniformly as $\chi \rightarrow \infty$.  In particular, if $m\in\mathcal I_{bi},$ $S^{(i)}_{m,\chi}=c_{01}+O(\frac{1}{\sqrt{\chi}})$; if $m\in \mathcal I_{si}\backslash\mathcal I_{bi}$, $S^{(i)}_{m,\chi}=c_{02}+O(\frac{1}{\sqrt{\chi}})$; if $m\not\in\mathcal I_{si},$ $S^{(i)}_{m,\chi}=O(\frac{1}{\sqrt{\chi}})$, where $i=1, 2$ and $c_{01},$ $c_{02}$ are defined in (\ref{c0}).

 Moreover, if $\mathcal I_{s1}\cap \mathcal I_{s2}\not=\emptyset$, there coexist many possible non-constant spiky solutions $(\hat u^*,\hat v^*)$ to (\ref{SSfinal}), in which $(\hat u^*,\hat v^*)$ have the same form as $(\bar u^*,\bar v^*)$ with $S_{m,\chi}^{(1)}$ and $S_{m,\chi}^{(2)}$ being replaced by $\bar S_{m,\chi}^{(1)}$ and $\bar S_{m,\chi}^{(2)}$, respectively.  In particular, if $m\in \mathcal I_{s1}\cap \mathcal I_{s2},$ $\bar S^{(i)}_{m,\chi}= S_i^*+O(\frac{1}{\sqrt{\chi}})$; if $m\in \mathcal I_{si}\backslash\{\mathcal I_{s1}\cap \mathcal I_{s2}\},$ $\bar S^{(i)}_{m,\chi}=c_{0i}+O(\frac{1}{\sqrt{\chi}})$; if $m\not\in\mathcal I_{si},$ $\bar S^{(i)}_{m,\chi}=O(\frac{1}{\sqrt{\chi}})$, where $i=1, 2$ and $S_1^*$ and $S_2^*$ are determined by (\ref{balancingsystem1}).

\end{proposition}
\begin{proof}
We firstly assume that either $S^{(1)}_{m,\chi}= O(\frac{1}{\sqrt{\chi}})$ or $S^{(2)}_{m,\chi}= O(\frac{1}{\sqrt{\chi}})$ in the neighborhood of every $m$-th non-degenerate maximum point. 
Thanks to Theorem \ref{thm11}, we have for $\chi$ large, there exists the non-constant solution $(\bar u^*,\bar v^*)$ given by (\ref{firstcoexist}) to (\ref{SSfinal}).

Next, we focus on the coexistence of steady states in the local bump.
It suffices to consider the following balancing conditions:
\begin{align}\label{balancingsystem}
\left\{\begin{array}{ll}
\int_{\mathbb R^n}e^{-\vert y\vert^2}(S_1 e^{-\vert y\vert^2}+S_2 e^{-c\vert y\vert^2}-\theta)(1-S_1 e^{-\vert y\vert^2}-S_2 e^{-c\vert y\vert^2})dy=0,\\
\int_{\mathbb R^n}e^{-c\vert y\vert^2}(S_1 e^{-\vert y\vert^2}+S_2 e^{-c\vert y\vert^2}-\theta)(1-S_1 e^{-\vert y\vert^2}-S_2 e^{-c\vert y\vert^2})dy=0.
\end{array}
\right.
\end{align}
By straightforward calculation, (\ref{balancingsystem}) can be simplified as:
\begin{align}\label{balancingsystem1}
\left\{\begin{array}{ll}
I_1(S_1,S_2)=0,\\
I_2(S_1,S_2)=0,
\end{array}
\right.
\end{align}
where $I_1$ and $I_2$ are defined by
$$I_1:=-3^{-\frac{n}{2}}S_1^2-2(c+2)^{-\frac{n}{2}}S_1S_2-(2c+1)^{-\frac{n}{2}}S_2^2+2^{-\frac{n}{2}}(1+\theta)S_1+(c+1)^{-\frac{n}{2}}(1+\theta)S_2-\theta$$
and
$$I_2:=-(c+2)^{-\frac{n}{2}}S_1^2-2(2c+1)^{-\frac{n}{2}}S_1S_2-(3c)^{-\frac{n}{2}}S_2^2+(c+1)^{-\frac{n}{2}}(1+\theta)S_1+(2c)^{-\frac{n}{2}}(1+\theta)S_2-c^{-\frac{n}{2}}\theta.$$
By using $S_2$ to express $S_1$, we have from $I_1(S_1,S_2)=0$   that $S_1=g_1(S_2)$ or $S_1=g_2(S_2)$, where 
\begin{align}\label{I1term}
g_1(S_2):=\frac{\big({\frac{3}{2}}\big)^{\frac{n}{2}}(1+\theta)-2\big(\frac{3}{c+2}\big)^{\frac{n}{2}}S_2+ 3^{\frac{n}{2}}\sqrt{\Delta_1}}{2};\nonumber\\ g_2(S_2):=\frac{\big({\frac{3}{2}}\big)^{\frac{n}{2}}(1+\theta)-2\big(\frac{3}{c+2}\big)^{\frac{n}{2}}S_2- 3^{\frac{n}{2}}\sqrt{\Delta_1}}{2},
\end{align}
and $\Delta_1$ is given by
\begin{align*}
\Delta_1(S_2):=&[2^{-\frac{n}{2}}(1+\theta)-2(c+2)^{-\frac{n}{2}}S_2]^2-4\cdot 3^{-\frac{n}{2}}[(2c+1)^{-\frac{n}{2}}S_2^2-(c+1)^{-\frac{n}{2}}(1+\theta)S_2+\theta ]\\
=&-4\bigg[\bigg(\frac{1}{6c+3}\bigg)^{\frac{n}{2}}-\frac{1}{(c+2)^n}\bigg]S_2^2+4(1+\theta)[(3c+3)^{-\frac{n}{2}}-(2c+4)^{-\frac{n}{2}}]S_2\\
&+2^{-n}\theta^2-(4\cdot 3^{-\frac{n}{2}}-2^{1-n})\theta+2^{-n}.
\end{align*}
Similarly, one finds from $I_2(S_1,S_2)=0$ that $S_1=g_3(S_2)$ or $S_1=g_4(S_2)$, where 
\begin{align}\label{I2term}
g_3(S_2)=\frac{\big(\frac{c+2}{c+1}\big)^{\frac{n}{2}}(1+\theta)-2\big(\frac{c+2}{2c+1}\big)^{\frac{n}{2}}S_2+(c+2)^{\frac{n}{2}}\sqrt{\Delta_2}}{2};\nonumber\\ g_4(S_2)=\frac{\big(\frac{c+2}{c+1}\big)^{\frac{n}{2}}(1+\theta)-2\big(\frac{c+2}{2c+1}\big)^{\frac{n}{2}}S_2-(c+2)^{\frac{n}{2}}\sqrt{\Delta_2}}{2},
\end{align}
and $\Delta_2$ is defined as
\begin{align*}
\Delta_2(S_2):=&[(c+1)^{-\frac{n}{2}}(1+\theta)-2(2c+1)^{-\frac{n}{2}}S_2]^2-4\cdot (c+2)^{-\frac{n}{2}}[(3c)^{-\frac{n}{2}}S_2^2-(2c)^{-\frac{n}{2}}(1+\theta)S_2+c^{-\frac{n}{2}}\theta ]\\
=&-4\bigg[\bigg(\frac{1}{3c^2+6c}\bigg)^{\frac{n}{2}}-\frac{1}{(2c+1)^n}\bigg]S_2^2-4(1+\theta)[(c+1)^{-\frac{n}{2}}(2c+1)^{-\frac{n}{2}}-(2c)^{-\frac{n}{2}}(c+2)^{-\frac{n}{2}}]S_2\\
&+(c+1)^{-n}(1+\theta)^2-4(c^2+2c)^{-\frac{n}{2}}\theta.
\end{align*}
To find the positive solution $(S_1^*,S_2^*)$ of the algebraic system (\ref{balancingsystem1}), it is equivalent to prove there exists $S^*_2$ such that one of the following statements holds:
\begin{align}\label{fourcase}
(i).\text{~~} g_1(S^*_2)=g_3(S^*_2);\quad (ii).~~g_1(S^*_2)=g_4(S^*_2);\nonumber\\ (iii).~~g_2(S^*_2)=g_3(S^*_2);\quad (iv).~~g_2(S^*_2)=g_4(S^*_2).
\end{align}
We only discuss case (i) and others can be similarly analyzed.  It is necessary to first study the properties of $g_1$ and $g_3$.  We claim $g_1(S_2)$ is a  decreasing function with respect to $S_2$.  Indeed, we have from (\ref{I1term}) that
\[\frac{d g_1}{dS_2}=-\Big(\frac{3}{c+2}\Big)^{\frac{n}{2}}+\frac{3^{\frac{n}{2}}\Delta'_1}{4\sqrt{\Delta_1}},\]
where
\[\Delta_1'=-8S_2[ (6c+3)^{-\frac{n}{2}}-(c+2)^{-n}]-4(1+\theta)[(2c+4)^{-\frac{n}{2}}-(3c+3)^{-\frac{n}{2}}]<0\]
due to $c>1$, and thereby $g'_1(S_2)<0,$ which proves our claim.  Similarly, we obtain from (\ref{I2term}) that $g_3$ is decreasing.  Note that 
\[g_1(0)=\Big(\frac{3}{2}\Big)^{\frac{n}{2}}(1+\theta)+3^{\frac{n}{2}}\sqrt{2^{-n}(1+\theta)^2-4\cdot 3^{-\frac{n}{2}}\theta },\]
and
$$g_3(0)=\Big(\frac{c+2}{c+1}\Big)^{\frac{n}{2}}(1+\theta)+(c+2)^{\frac{n}{2}}\sqrt{(c+1)^{-n}(1+\theta)^2-4\cdot (c^2+2c)^{-\frac{n}{2}}\theta},$$
then we have $g_1(0)\approx 2\Big(\frac{3}{2}\Big)^{\frac{n}{2}}> 2\Big(\frac{c+2}{c+1}\Big)^{\frac{n}{2}}\approx g_3(0)$ for $\theta$ small in light of $c>1$, which implies there exists $\bar\theta^*$ such that $g_1(0)>g_3(0)$ for all $\theta\in(0,\bar\theta^*)$.  Assume $c-1$ is small, then 
we expand $\Delta_1$ and $\Delta_2$ to obtain
\begin{align*}
\Delta_1=&-4(1+\theta)(c-1)2^{-\frac{n}{2}-2}\cdot 3^{-\frac{n}{2}-1}n S_2+2^{-n}\theta^2-(4\cdot 3^{-\frac{n}{2}}-2^{1-n})\theta+2^{-n}+O((c-1)^2),
\end{align*}
and
\begin{align*}
\Delta_2=&-4(1+\theta)(c-1)2^{-\frac{n}{2}-2}\cdot 3^{-\frac{n}{2}-1}n S_2+2^{-n}\theta^2-(4\cdot 3^{-\frac{n}{2}}-2^{1-n})\theta+2^{-n}\nonumber\\
&-(c-1)2^{-n-1}n(1+\theta)^2+8(c-1)3^{-\frac{n}{2}-1}n\theta+O((c-1)^2).
\end{align*}
We define 
$$\beta_2:=2^{-n}\theta^2-(4\cdot 3^{-\frac{n}{2}}-2^{1-n})\theta+2^{-n}$$
and further expand $\sqrt{\Delta_1}$ and $\sqrt{\Delta_2}$ to get
\begin{align}\label{delta1expand}
\sqrt{\Delta_1}=\sqrt{\beta_2}-\frac{1}{\sqrt{\beta_2}}(1+\theta)(c-1)2^{-\frac{n}{2}-1}\cdot 3^{-\frac{n}{2}-1}n S_2+O((c-1)^2),
\end{align}
and 
\begin{align}\label{delta2expand}
\sqrt{\Delta_2}=&\sqrt{\beta_2}-\frac{1}{\sqrt{\beta_2}}(1+\theta)(c-1)2^{-\frac{n}{2}-1}\cdot 3^{-\frac{n}{2}-1}n S_2\nonumber\\
&-\frac{1}{\sqrt{\beta_2}}(c-1)2^{-n-2}n(1+\theta)^2\frac{4}{\sqrt{\beta_2}}(c-1)3^{-\frac{n}{2}-1}n\theta+O((c-1)^2).
\end{align}
Substitute (\ref{delta1expand}) and (\ref{delta2expand}) into $g_1$ and $g_3$ to obtain
\begin{align}\label{g3ming1}
g_3(S_2)-g_1(S_2)= \frac{1}{2\sqrt{\beta_2}}(c-1)^33^{-\frac{n}{2}-3}n(n+2)S_2^2+\gamma(\theta)(c-1)+O((c-1)^4),
\end{align}
where
$$\gamma(\theta):=-2^{-\frac{n}{2}-3}3^{\frac{n}{2}-1}n(1+\theta)
-\frac{3^{\frac{n}{2}}}{2\sqrt{\beta_2}}2^{-n-2}n(1+\theta)^2+\frac{2}{3\sqrt{\beta_2}}n\theta\\
+\frac{\sqrt{\beta_2}n 3^{\frac{n}{2}-1}}{4}<0,$$
since $g_1(0)>g_3(0)$ for $\theta$ small.  
In light of $\gamma(\theta)\rightarrow 0$ as $\theta\rightarrow 0$, 
$\frac{1}{2\sqrt{\beta_2}}3^{-\frac{n}{2}-3}n(n+2)>0$ and $c>1$, we have it is possible to adjust $\theta$ in (\ref{g3ming1}) such that there exists $S_{2max}$ which satisfies $g_3(S_{2max})-g_1(S_{2max})>0$.  By the existence theorem of zeros, we obtain there exists $S_2^*$ such that $g_3=g_1.$
Therefore, we have there exists some $\bar c^*$ such that when $c\in (1,\bar c^*)$, (\ref{balancingsystem1}) admits the positive solution $(S_1^*,S_2^*)$ satisfying $g_3=g_1.$  This finishes the proof of the coexistence of non-constant solutions and Proposition \ref{prop43}.
\end{proof}
 \subsubsection{Study of Linearized Eigenvalue Problem}
We proceed to discuss the stability of steady states given in Proposition \ref{prop43}.  The linearized eigenvalue problem of (\ref{AA}) around the spiky solutions $(u_s,v_s)$ is:
  \begin{align}\label{EPSfinal}
\left\{\begin{array}{ll}
\lambda \bar \varphi=\nabla\cdot(\nabla \bar \varphi-\chi \bar \varphi\nabla A)+u_sg_u\vert_{(u_s+v_s)}\bar \varphi+u_sg_v\vert_{(u_s+v_s)}\bar \psi+g\vert_{(u_s+v_s)}\bar\varphi,&x\in\Omega,\\
\lambda \bar\psi=\nabla\cdot(\nabla \bar \psi- c\chi\bar \psi\nabla A)+v_sg_v\vert_{(u_s+v_s)}\bar\psi+v_sg_u\vert_{(u_s+v_s)}\bar \varphi+g\vert_{(u_s+v_s)}\bar\psi,&x\in\Omega,\\
\partial_{\textbf{n}} \bar \varphi-\chi \bar \varphi\partial_{\textbf{n}}A=\partial_{\textbf{n}}\bar \psi-c\chi \bar \psi\partial_{\textbf{n}}A=0,&x\in\partial \Omega.
\end{array}
\right.
  \end{align}
 By studying the signs of eigenvalues in (\ref{EPSfinal}), we establish the following proposition:
  \begin{proposition}\label{prop44}
Assume all conditions and conclusions in Proposition \ref{prop43} hold.  
If there exists $m$ such that $S^{(1)}_{m,\chi}=c_{02}+O(\frac{1}{\sqrt{\chi}})$ or $S^{(2)}_{m,\chi}=c_{02}+O(\frac{1}{\sqrt{\chi}})$, then we have the steady states $(\bar u^*,\bar v^*)$ are unstable; otherwise they are linearly stable.  Furthermore, there exist $c^*$ and $\tilde \theta^*$ such that for $c\in(1,c^*)$ and $\theta\in(0,\tilde \theta^*),$ the other steady states $(\hat u^*,\hat v^*)$ are always unstable. 
  \end{proposition}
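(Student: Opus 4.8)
The plan is to reduce the linearized problem (\ref{EPSfinal}) to a finite-dimensional matrix eigenvalue problem, following verbatim the strategy of Section \ref{sec3}. First I would rescale by $y=x/\epsilon$ with $\epsilon=1/\sqrt{\chi}$, record that $\lambda$ stays bounded by the weighted-energy estimate used for (\ref{33}), and decompose the eigenfunctions onto the approximate kernels. The approximate kernel of the $\bar\varphi$-block is spanned by the $u$-profiles $Z^{u}_m=e^{-\frac12\sum_i h_m^{(i)}(y-P_m)^2}$ and that of the $\bar\psi$-block by the narrower $v$-profiles $Z^{v}_m=e^{-\frac{c}{2}\sum_i h_m^{(i)}(y-P_m)^2}$; Proposition \ref{proposition21}, applied with weight $a_m$ and then with $c\,a_m$ (both negative-definite quadratics), identifies these as the only bounded kernel elements, so the reduction of Proposition \ref{prop3} carries over to give $\bar\varphi=\sum_m[e^{(1)}_{m}+o(1)]Z^u_m+o(1)$ and $\bar\psi=\sum_m[e^{(2)}_{m}+o(1)]Z^v_m+o(1)$.

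For the states $(\bar u^*,\bar v^*)$ of (\ref{firstcoexist}) each bump carries a single species at leading order: $\bar u^*$ concentrates at $x_1,\dots,x_l$ and $\bar v^*$ at $x_{l+1},\dots,x_k$. Hence the off-diagonal couplings $u_sg_v\bar\psi$ and $v_sg_u\bar\varphi$ are products of a factor exponentially small at the relevant centre with a factor concentrated there, so they are negligible and the reduced matrix block-diagonalises. Testing the $\bar\varphi$- (resp. $\bar\psi$-) equation against $\hat\eta_m$ and repeating the computation of $I_2^{(i)}$ in Proposition \ref{prop3} yields $\lambda_m=\alpha_0 h'(S^{(1)}_{m,0})+O(\epsilon)$ on the $u$-bumps and $\lambda_m=\alpha_0 h'(S^{(2)}_{m,0})+O(\epsilon)$ on the $v$-bumps, the normalising factors $c^{\pm n/2}$ coming from the narrower $v$-profile cancelling after the substitution $z=\sqrt{c}\,y$. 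Since $h'<0$ at $c_{01}$ and at $0$ but $h'>0$ at $c_{02}$, exactly as in Proposition \ref{prop4}, the state is unstable if and only if some height equals $c_{02}$, which is the asserted alternative.

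For the genuine coexistence states $(\hat u^*,\hat v^*)$ of (\ref{secondcoexist}) the two profiles overlap in the same bump, so the couplings must be retained. Testing both equations against $\hat\eta_m$ and using $I_1(S_1^*,S_2^*)=I_2(S_1^*,S_2^*)=0$ from (\ref{balancingsystem1}) to kill the undifferentiated reaction term, the reduced problem in each coexistence bump becomes $\lambda\,(e^{(1)},e^{(2)})^T=\alpha_0\,\mathrm{diag}(S_1^*,c^{n/2}S_2^*)\,J\,(e^{(1)},e^{(2)})^T+O(\epsilon)$, where $J=\partial(I_1,I_2)/\partial(S_1,S_2)$ at $(S_1^*,S_2^*)$; here the mixed-width overlap integrals prevent the $c^{n/2}$ from cancelling. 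Because $S_1^*,S_2^*,c>0$, the sign of the determinant of this matrix equals that of $\det J$, and I would show $\det J<0$, which forces two eigenvalues of opposite sign and hence a positive one, giving instability. At $c=1$ one has $I_1\equiv I_2$, the curves $\{I_1=0\}$ and $\{I_2=0\}$ coincide along $S_1+S_2=c_0$, and the matrix degenerates to the rank-one object with eigenvalues $\{0,\alpha_0 h'(c_0)\}$, reproducing the single-equation spectrum. For $c\in(1,c^*)$ the curves split with slopes $g_1'(S_2^*)$ and $g_3'(S_2^*)$, so $\det J=\partial_{S_1}I_1\,\partial_{S_1}I_2\,(g_1'-g_3')(S_2^*)$; the expansion (\ref{g3ming1}) gives $(g_1'-g_3')(S_2^*)<0$, while for $\theta\in(0,\tilde\theta^*)$ a direct Gaussian-moment computation shows $\partial_{S_1}I_1$ and $\partial_{S_1}I_2$ share the same sign, so $\det J<0$. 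When instead some height equals $c_{02}$ the transverse eigenvalue $\alpha_0 h'(c_{02})$ is already positive, and instability is immediate.

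The main obstacle is precisely the sign of $\det J$. Since the two balancing curves coincide at $c=1$, the crossing that produces $(\hat u^*,\hat v^*)$ is a transversal intersection that degenerates in the limit, so controlling its orientation demands the delicate $(c-1)$-expansions of $\Delta_1,\Delta_2,g_1,g_3$ from Proposition \ref{prop43}; moreover the crossing satisfies $S_2^*=O\big(\sqrt{-\gamma(\theta)}/(c-1)\big)$, which is exactly why the two thresholds $c^*$ and $\tilde\theta^*$ must be chosen together, keeping $(S_1^*,S_2^*)$ in the admissible positive range while preserving the sign of $\det J$. A secondary technical point is to justify that the mismatched widths of the $u$- and $v$-spikes do not spoil the kernel reduction: the overlap integrals $\int e^{-(1+c)|y|^2}g'(w)\,dy$ and their relatives are finite Gaussians entering $J$ cleanly, but one must verify that their $O(\epsilon)$ remainders are uniform so that the leading matrix genuinely dictates the signs of the eigenvalues.
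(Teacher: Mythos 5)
Your proposal is correct and shares the paper's overall skeleton, but it settles the crucial sign by a genuinely different argument. Like the paper, you decouple the $(\bar u^*,\bar v^*)$ problem into single-equation spectra bump by bump (the paper sets $\bar\psi=0$ in each cut-off region and invokes Section \ref{sec3}), and you reduce the coexistence states $(\hat u^*,\hat v^*)$ to a $2\times 2$ problem per bump whose matrix is, up to positive diagonal factors, the Jacobian $J=\partial(I_1,I_2)/\partial(S_1,S_2)$ at $(S_1^*,S_2^*)$ --- exactly the paper's matrix $B_m$ in (\ref{438}) and (\ref{4411}) --- with instability following from a negative determinant, since a real $2\times2$ matrix with $\det<0$ has a positive eigenvalue. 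The divergence is in proving $\det J<0$ for the case-(i) intersections: the paper computes $\det B_m=I_3(S_1^*,S_2^*)$ explicitly in (\ref{detbm}), solves $I_3=0$ for $S_1$, expands $\alpha_1,\dots,\alpha_6$, $\sqrt{\Delta_3}$, $S_{11}$ and $S_1^*$ near $c=1$ and $\theta$ small ((\ref{alpha123})--(\ref{S1star})), and concludes $S_1^*<S_{11}$, hence $I_3<0$; you instead use the identity $\det J=\partial_{S_1}I_1\,\partial_{S_1}I_2\,\big(g_1'-g_3'\big)(S_2^*)$ --- which is algebraically correct --- combined with the observations that $g_1,g_3$ are both upper roots of concave quadratics in $S_1$ (so $\partial_{S_1}I_1,\partial_{S_1}I_2<0$, making the product positive; smallness of $\theta$ is not even needed for this) and that the crossing constructed in Proposition \ref{prop43} is transversal from below, so $(g_3-g_1)'(S_2^*)>0$ by (\ref{g3ming1}). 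Your route is cleaner and more geometric: it reuses the existence expansion rather than redoing a heavier one, and it makes the degeneration of $\det J$ at $c=1$ transparent. Its cost is that you must justify differentiating the asymptotic expansion (\ref{g3ming1}), i.e. a $C^1$ control of the $O((c-1)^4)$ remainder, which is delicate precisely because $S_2^*\sim\sqrt{-\gamma(\theta)}/(c-1)$ forces $c^*$ and $\tilde\theta^*$ to be chosen jointly (you flag this correctly), whereas the paper's root comparison only needs $C^0$ expansions. One imprecision: your closing sentence disposes of the mixed intersections --- cases (ii)--(iv) of (\ref{fourcase}) --- by saying ``some height equals $c_{02}$,'' which is the criterion for $(\bar u^*,\bar v^*)$, not for coexistence bumps; the paper instead notes that a mixed intersection pairs a plus-branch with a minus-branch, forcing diagonal entries of $J$ of opposite signs and hence $\det J\leq J_{11}J_{22}<0$ immediately. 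Your identity handles these cases too (with $g_4$ in place of $g_3$), but as written that step is not justified.
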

\begin{proof}
To study the stability of $(\bar u^*,\bar v^*)$, without loss of generality, we assume $\bar v^*\sim 0$ in the cut-off region $\Omega_{\eta}$ containing $x_m$.  Since we have the fact that
\begin{align*}
g_u(x,u+v)=1+\theta-2(u+v),\quad g_u(x,u+v)=g_v(x,u+v),
\end{align*}
then the linearized eigenvalue problem of steady states $(\bar u^*,\bar v^*)$ becomes
  \begin{align}\label{EPSfinal1}
\left\{\begin{array}{ll}
\lambda \bar \varphi=\nabla\cdot(\nabla \bar \varphi-\chi \bar \varphi\nabla A)+\bar u^*(1+\theta-2\bar u^*)\bar \varphi+\bar u^*(1+\theta-2\bar u^*)\bar \psi+g(x,\bar u^*)\bar\varphi,&x\in\Omega_{\eta},\\
\lambda \bar\psi=\nabla\cdot(\nabla \bar \psi- c\chi\bar \psi\nabla A)+g(x,\bar  u^*)\bar\psi,&x\in\Omega_{\eta}.
\end{array}
\right.
  \end{align}
It is similar as the proof in Proposition \ref{prop42}, we have the eigenvector $\bar \psi=0$ in $\Omega_{\eta}$ for $\chi$ large.  
We next simplify the $\varphi$-equation in (\ref{EPSfinal1}) to get
\begin{align}\label{new1}
\lambda \bar \varphi=\nabla\cdot(\nabla \bar \varphi-\chi \bar \varphi\nabla A)+\bar u^*(1+\theta-2\bar u^*)\bar \varphi+g(x,\bar u^*)\bar\varphi,\quad x\in\Omega_{\eta}.
\end{align}
By applying the same argument in Section \ref{sec3} to (\ref{new1}), we can determine the signs of eigenvalues corresponding to the local cut-off regions, which imply the stability properties of $(\bar u^*,\bar v^*)$ stated in Proposition \ref{prop44}.  


The rest proof is devoted to the stability analysis of the steady states $(\hat u^*,\hat v^*)$.  Thanks to Proposition \ref{prop3}, we have for $\chi$ large, the eigen-vectors $(\bar\varphi,\bar\psi)$ satisfy $\bar \varphi\approx C_{2,m} e^{-\chi \sum_{i=1}^n h_m^{(i)}( x^{(i)}-x_m^{(i)})^2 }$ and $\bar \psi\approx C_{3,m} e^{-c\chi \sum_{i=1}^n h_m^{(i)}( x^{(i)}-x_m^{(i)})^2 }$ in the cut-off region containing $x_m$, where $C_{2,m}$ and $C_{3,m}$ are positive constants.  
We substitute them into (\ref{EPSfinal}) and integrate the $\bar\varphi$-equation and the $\bar\psi$-equation over the cut-off region $\Omega_{\eta}$ to obtain
\begin{align}\label{438}
\left\{\begin{array}{ll}
C_{2,m}\int_{\Omega_{\eta}} \hat u^* g_u\vert_{(\hat u^*+\hat v^*)}\hat u^*dx+C_{3,m}\int_{\Omega_{\eta}} \hat u^* g_u\vert_{(\hat u^*+\hat v^*)}\hat v^*dx=\lambda_m C_{2,m}\int_{\Omega_{\eta}} \hat u^*dx\\
C_{2,m}\int_{\Omega_{\eta}}\hat v^* g_u\vert_{(\hat u^*+\hat v^*)}\hat u^*dx+C_{3,m}\int_{\Omega_{\eta}}\hat v^* g_u\vert_{(\hat u^*+\hat v^*)}\hat v^*dx=\lambda_m C_{3,m}\int_{\Omega_{\eta}} \hat v^*dx.
\end{array}
\right.
\end{align}
We have from (\ref{438}) that the signs of eigenvalues $\lambda_m$ are determined by the following matrices:
\[B_m:=\left( \begin{array}{cc}
      \int_{\Omega_{\eta}} \hat u^* g_u\vert_{(\hat u^*+\hat v^*)}\hat u^*dx &\int_{\Omega_{\eta}} \hat u^* g_u\vert_{(\hat u^*+\hat v^*)}\hat v^*dx  \\
      \int_{\Omega_{\eta}} \hat v^* g_u\vert_{(\hat u^*+\hat v^*)}\hat u^*dx& \int_{\Omega_{\eta}} \hat v^* g_u\vert_{(\hat u^*+\hat v^*)}\hat v^*dx \\
    \end{array}
  \right).
\]
In particular, if the matrix $B_m$ has a positive eigenvalue, then we have the corresponding $\lambda_m$ satisfies $\lambda_m>0$, which implies the instability of $(\hat u^*,\hat v^*).$ We next analyze the properties satisfied by the matrix $B_m$.  On the one hand, we define $y=\frac{x-x_m}{\epsilon}$ to obtain that for $\chi$ large,
\begin{align}\label{440}
 (B_m)_{1,1}=\int_{\Omega_{\eta}} \hat u^* g_u\vert_{(\hat u^*+\hat v^*)}\hat u^*dx\approx D_1{S_1^*}^2 [(1+\theta)2^{-\frac{n}{2}}-2S^*_13^{-\frac{n}{2}}-2S^*_2(2+c)^{-\frac{n}{2}}],
\end{align}
\begin{align}\label{441}
 (B_m)_{1,2}=(B_m)_{2,1}&=\int_{\Omega_{\eta}} \hat v^* g_u\vert_{(\hat u^*+\hat v^*)}\hat u^*dx\nonumber\\
 &\approx S_1^*S_2^* D_2[(1+\theta)(c+1)^{-\frac{n}{2}}-2S^*_1(c+2)^{-\frac{n}{2}}-2S^*_2(2c+1)^{-\frac{n}{2}}],
\end{align}
and
\begin{align}\label{442}
(B_m)_{2,2}=\int_{\Omega_{\eta}} \hat v^* g_u\vert_{(\hat u^*+\hat v^*)}\hat v^*dx\approx 
{S_2^*}^2 D_3[(1+\theta)(2c)^{-\frac{n}{2}}-2S^*_1(2c+1)^{-\frac{n}{2}}-2S^*_2(3c)^{-\frac{n}{2}}],
\end{align}
where $D_1$, $D_2$, $D_3>0$ are constants.
On the other hand, one utilizes the same arguments in Section \ref{sec3} to obtain that 
\begin{align}\label{4411}
(B_m)_{1,1}=\frac{\partial I_1}{\partial S_1}\bigg\vert_{(S_1^*,S_2^*)},~ (B_m)_{1,2}=(B_m)_{2,1}=\frac{\partial I_1}{\partial S_2}\bigg\vert_{(S_1^*,S_2^*)}\text{~and~}(B_m)_{2,2}=\frac{\partial I_2}{\partial S_2}\bigg\vert_{(S_1^*,S_2^*)}.  
\end{align}
The signs of $\lambda_m$ can be determined by the traces and determinants of $B_m$, which are
\[ \text{Tr}(B_{m})= (B_{m})_{1,1}+(B_{m})_{2,2}.\quad \text{Det}(B_m)=(B_{m})_{1,1}\cdot (B_{m})_{2,2}-((B_{m})_{1,2})^2.\]
(\ref{4411}) implies $\text{Tr}(B_m)$ and $\text{Det}(B_m)$ can be rewritten as
$$\text{Tr}(B_m)=\frac{\partial I_1(S_1,S_2)}{\partial S_1}\bigg\vert_{(S_1^*,S_2^*)}+\frac{\partial I_2(S_1,S_2)}{\partial S_2}\bigg\vert_{(S_1^*,S_2^*)}$$
and 
$$\text{Det}(B_m)=\frac{\partial I_1(S_1,S_2)}{\partial S_1}\bigg\vert_{(S_1^*,S_2^*)}\cdot\frac{\partial I_2(S_1,S_2)}{\partial S_2}\bigg\vert_{(S_1^*,S_2^*)}-\Bigg(\frac{\partial I_1(S_1,S_2)}{\partial S_2}\bigg\vert_{(S_1^*,S_2^*)}\Bigg)^2.$$
Thanks to Proposition \ref{prop43}, we have the fact that for some $m$, $(S_1^*,S_2^*)$ are determined by one of four cases in (\ref{fourcase}).  When one of case (ii), case (iii) and case (iv) in (\ref{fourcase}) holds, we claim that the corresponding steady states $(\hat u^*,\hat v^*)$ are unstable.  We only exhibit the proof for case (ii) and the arguments are the same in other two cases.  
Since $(S_1^*,S_2^*)$ is determined by $g_1(S_2)=g_4(S_2)$, 
we have from the same arugment in Proposition \ref{prop42} that $\frac{\partial I_1}{\partial S_1}\big\vert_{(S_1^*,S_2^*)}<0$ and $\frac{\partial I_2}{\partial S_2}\big\vert_{(S_1^*,S_2^*)}>0,$ which implies $\text{Det}(B_m)<0$ and whereby we can show the matrix $B_m$ admits one positive eigenvalue.  This finishes the proof of our claim.

Now, we would like to analyze the stability of $(\hat u^*,\hat v^*)$ when case (i) holds.  First of all, it follows from $g_1(S^*_2)=g_3(S^*_2)$ that $\frac{\partial I_1}{\partial S_1}\big\vert_{(S_1^*,S_2^*)}$, $\frac{\partial I_2}{\partial S_2}\big\vert_{(S_1^*,S_2^*)}<0,$ which implies $\text{Tr}(B_m)<0$.  To investigate the sign of $\text{Det}(B_m)$, we calculate by using (\ref{440}), (\ref{441}) and (\ref{442}) to get
\begin{align}\label{detbm}
\text{Det}(B_m)=&4\Bigg(\frac{1}{(6c+3)^{\frac{n}{2}}}-\frac{1}{(c+2)^n}\Bigg){S_1^*}^2+4S_1^*S_2^*\bigg[\frac{1}{(9c)^{\frac{n}{2}}}-\frac{1}{[(2+c)(2c+1)]^{\frac{n}{2}}}\bigg]\nonumber\\
&+2\Bigg[-\frac{1}{ 6^{\frac{n}{2}}c^{\frac{n}{2}}}-\frac{1}{(4c+2)^{\frac{n}{2}}}+\frac{2}{(c+2)^{\frac{n}{2}}(c+1)^{\frac{n}{2}}}\Bigg](1+\theta)S_1^*\nonumber\\
&+4\Bigg[\frac{1}{(6c+3c^2)^{\frac{n}{2}}}-\frac{1}{(2c+1)^n}\Bigg]{S_2^*}^2+\Bigg(\frac{1}{(4c)^{\frac{n}{2}}}-\frac{1}{(c+1)^n}\Bigg)(1+\theta)^2\nonumber\\
&+2(1+\theta)S_2^*\Bigg[\frac{2}{(2c+1)^{\frac{n}{2}}(c+1)^{\frac{n}{2}}}-\frac{1}{c^{\frac{n}{2}}(4+2c)^{\frac{n}{2}}}-\frac{1}{6^{\frac{n}{2}}c^{\frac{n}{2}}}\Bigg]:=I_3(S_1^*,S_2^*).
\end{align}
Define 
\begin{align*}
\alpha_1&=4\Bigg(\frac{1}{(c+2)^n}-\frac{1}{(6c+3)^{\frac{n}{2}}}\Bigg),\quad  \alpha_2=4\bigg[\frac{1}{[(2+c)(2c+1)]^{\frac{n}{2}}}-\frac{1}{(9c)^{\frac{n}{2}}}\bigg],\\
\alpha_3&=2\Bigg[\frac{1}{ 6^{\frac{n}{2}}c^{\frac{n}{2}}}+\frac{1}{(4c+2)^{\frac{n}{2}}}-\frac{2}{(c+2)^{\frac{n}{2}}(c+1)^{\frac{n}{2}}}\Bigg](1+\theta)
\end{align*}
and
\begin{align*}
\alpha_4&=4\Bigg[\frac{1}{(2c+1)^n}-\frac{1}{(6c+3c^2)^{\frac{n}{2}}}\Bigg],\quad  \alpha_5=2(1+\theta)\Bigg[-\frac{2}{(2c+1)^{\frac{n}{2}}(c+1)^{\frac{n}{2}}}+\frac{1}{c^{\frac{n}{2}}(4+2c)^{\frac{n}{2}}}+\frac{1}{6^{\frac{n}{2}}c^{\frac{n}{2}}}\Bigg],\\
\alpha_6&=\Bigg(\frac{1}{(c+1)^n}-\frac{1}{(4c)^{\frac{n}{2}}}\Bigg)(1+\theta)^2,
\end{align*}
to rewrite (\ref{detbm}) as 
$$I_3(S_1^*,S_2^*)=-\alpha_1 {S_1^*}^2-\alpha_2S_1^*S_2^*-\alpha_3S_1^*-\alpha_4{S_2^*}^2-\alpha_5 S_2^*-\alpha_6.$$  We claim that $\text{Det}(B_m)=I_3(S_1^*,S_2^*)<0$.  To prove this, we solve $I_3(S_1,S_2)=0$ to obtain
\begin{align}\label{S11star}
S_{11}=\frac{-(\alpha_2 S_2+\alpha_3)-\sqrt{\Delta_3}}{2\alpha_1},\quad S_{12}=\frac{-(\alpha_2 S_2+\alpha_3)+\sqrt{\Delta_3}}{2\alpha_1},
\end{align}
where $\Delta_3$ is defined by
\begin{align*}
\Delta_3:=(\alpha^2_2-4\alpha_1\alpha_4)S_2^2+2(\alpha_2\alpha_3-2\alpha_1\alpha_5)S_2+\alpha_3^2-4\alpha_1\alpha_6.
\end{align*}
Moreover, we expand $\alpha_i$ at $c=1$ to obtain
\begin{align}\label{alpha123}
\alpha_1&=-\frac{2n}{3^{n+2}}(c-1)^2+O((c-1)^3),\quad  \alpha_2=-\frac{4n}{3^{n+2}}(c-1)^2+O((c-1)^3),\nonumber\\
\alpha_3&=\frac{n(n+26)}{2^{\frac{n}{2}+3}\cdot3^{\frac{n}{2}+2}}(c-1)^2(1+\theta)+O((c-1)^3);
\end{align}
and
\begin{align}\label{alpha456}
\alpha_4&=-\frac{2n}{3^{n+2}}(c-1)^2+O((c-1)^3),\quad  \alpha_5=(1+\theta)\frac{n(n+26)}{2^{\frac{n}{2}+3}\cdot 3^{\frac{n}{2}+2}}(1+\theta)(c-1)^2+O((c-1)^3),\nonumber\\
\alpha_6&=-\frac{n}{2^{n+3}}(c-1)^2(1+\theta)^2+O((c-1)^3).
\end{align}
Furthermore, $\sqrt{\Delta_3}$ can be expanded as
\begin{align}\label{delta3}
\sqrt{\Delta_3}=\sqrt{\alpha_3^2-4\alpha_1\alpha_6}+O((c-1)^3)=\frac{n\sqrt{(2 + n) (50 + n)}}{2^{\frac{n}{2}+3}\cdot3^{\frac{n}{2}+2}}(c-1)^2(1+\theta)+O((c-1)^3).
\end{align}
Substitute (\ref{alpha123}), (\ref{alpha456}) and (\ref{delta3}) into (\ref{S11star}) to obtain
\begin{align}\label{S11}
S_{11}=2^{-5 - \frac{n}{2}}\cdot 3^{\frac{n}{2}} (26 + n) (1 +\theta )-S_2^*+2^{-5 - \frac{n}{2}}\cdot 3^{\frac{n}{2}}\sqrt{(n+2)(50+n)}(1+\theta)+O(c-1),
\end{align}
Similarly, we expand $S_1^*$ at $c=1$ to obtain 
\begin{align*}
S_1^*=&2^{-\frac{n}{2}-1}\cdot 3^{\frac{n}{2}}(1+\theta)-S_2^*+2^{-1}\cdot3^{\frac{n}{2}}\sqrt{2^{-n}\theta^2-(4\cdot 3^{-\frac{n}{2}}-2^{1-n})\theta+2^{-n}}+O(c-1).
\end{align*}
Assume that $\theta$ is small, $S_1^*$ can be further expanded as
\begin{align}\label{S1star}
S_1^*=&2^{-\frac{n}{2}}\cdot 3^{\frac{n}{2}}(1+\theta)- 2^{\frac{n}{2}}\theta+O(\theta^2)+O(c-1).
\end{align}
Since for any $n\in \mathbb N^+$, 
$$2^{-\frac{n}{2}}\cdot 3^{\frac{n}{2}}<2^{-5 - \frac{n}{2}}\cdot 3^{\frac{n}{2}} (26 + n)+2^{-5 - \frac{n}{2}}\cdot 3^{\frac{n}{2}}\sqrt{(n+2)(50+n)},$$
we have from (\ref{S11}) and (\ref{S1star}) that there exist $c^*$ and $\tilde \theta^*$ such that for $c\in(1,c^*)$ and $\theta\in(0,\tilde \theta^*),$ $S_1^*<S_{11}$, which implies $I_3(S_1^*,S_2^*)<0$.  This finishes our claim and hence we obtain $(\hat  u^*,\hat  v^*)$ is unstable.  Now, we completes the proof of Proposition \ref{prop44}.


  \end{proof}
By combining Proposition \ref{prop43} and Proposition \ref{prop44}, we have Theorem \ref{thm14} holds.  This theorem indicates that there does not coexist any positive stable non-trivial pattern in every local region containing the non-degenerate maximum point of $A$, and the biological explanation is interacting aggressive species can not coexist in every local bump.
 \section{Numerical Studies and Discusion}\label{sec5}
In this section, several set of numerical simulations are presented to illustrate and highlight our theoretical analysis.  We apply the finite element method in FLEXPDE7 \cite{flex2021} to system (\ref{origin}) with the error is $10^{-4}$.  Besides supporting our theoretical results, our numerical simulations show that system (\ref{origin}) admits rich spatial-temporal dynamics. 
\begin{figure}[h!]
\centering
\begin{subfigure}[t]{0.5\textwidth}
    \includegraphics[width=\linewidth]{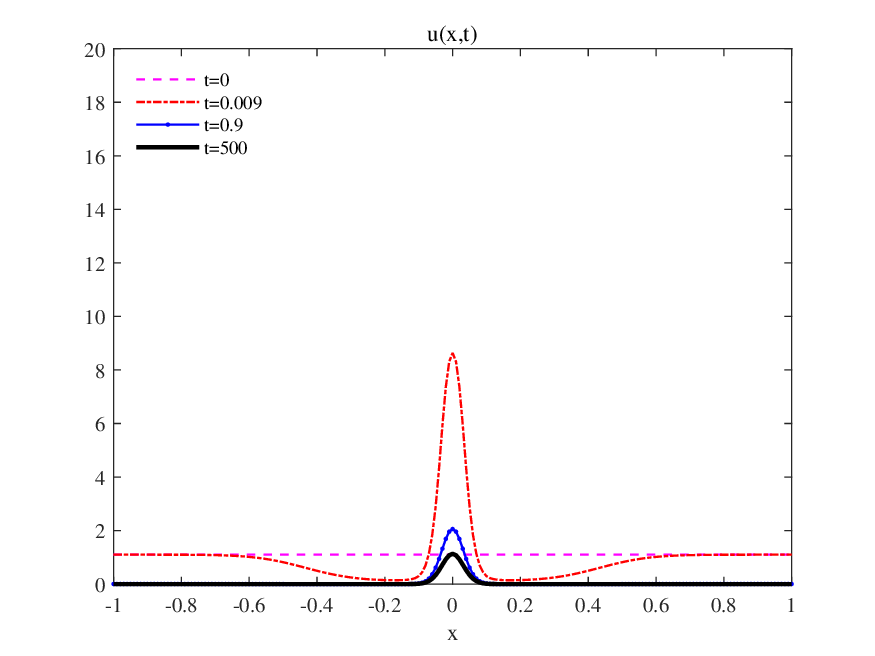}
    \caption*{$\chi=10$}
\end{subfigure}\hspace{-0.25in}
\begin{subfigure}[t]{0.5\textwidth}
  \includegraphics[width=\linewidth]{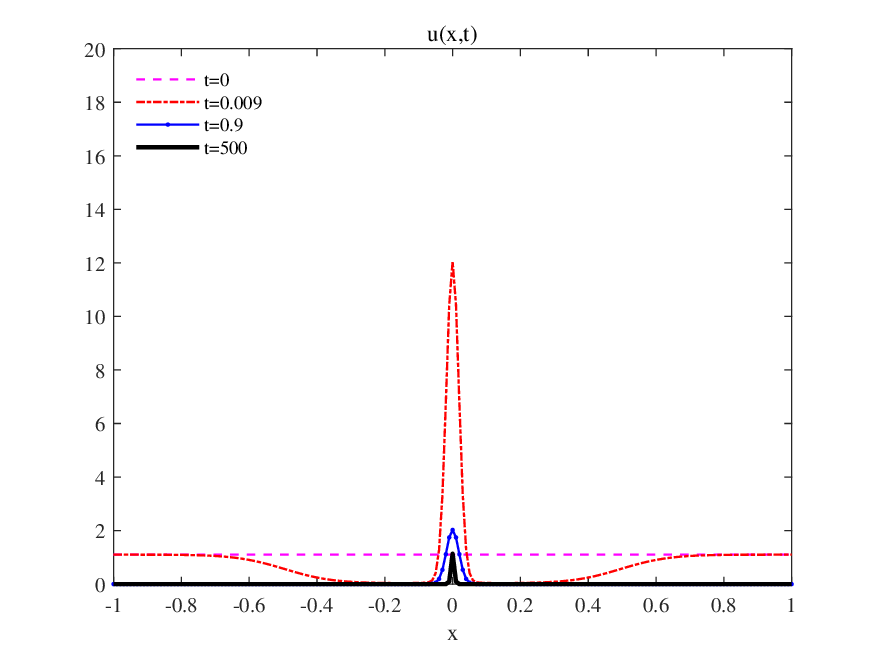}
  \caption*{$\chi=30$}
\end{subfigure}

\begin{subfigure}[t]{0.5\textwidth}
    \includegraphics[width=\linewidth]{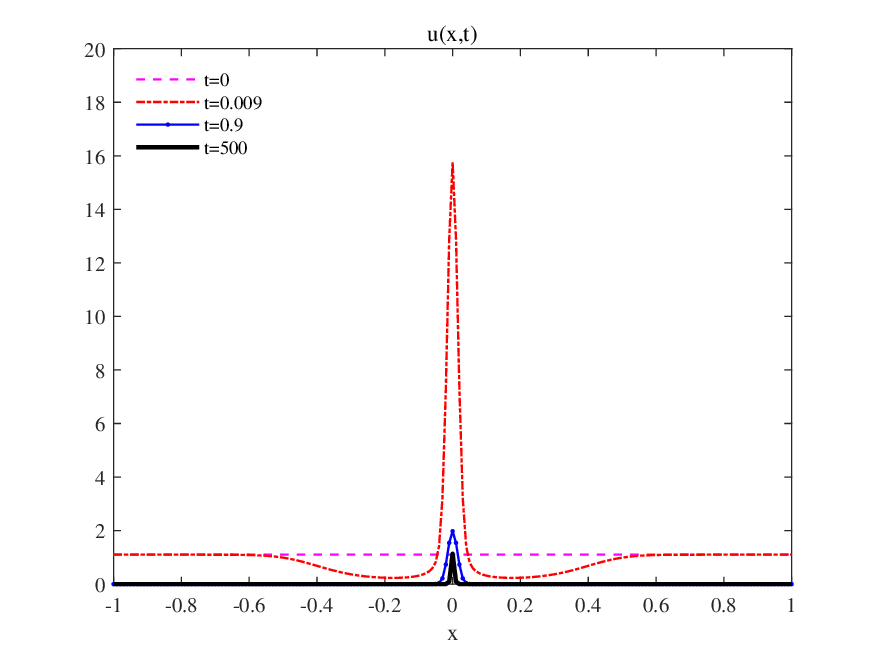}
    \caption*{$\chi=50$}
\end{subfigure}\hspace{-0.25in}
\begin{subfigure}[t]{0.5\textwidth}
  \includegraphics[width=\linewidth]{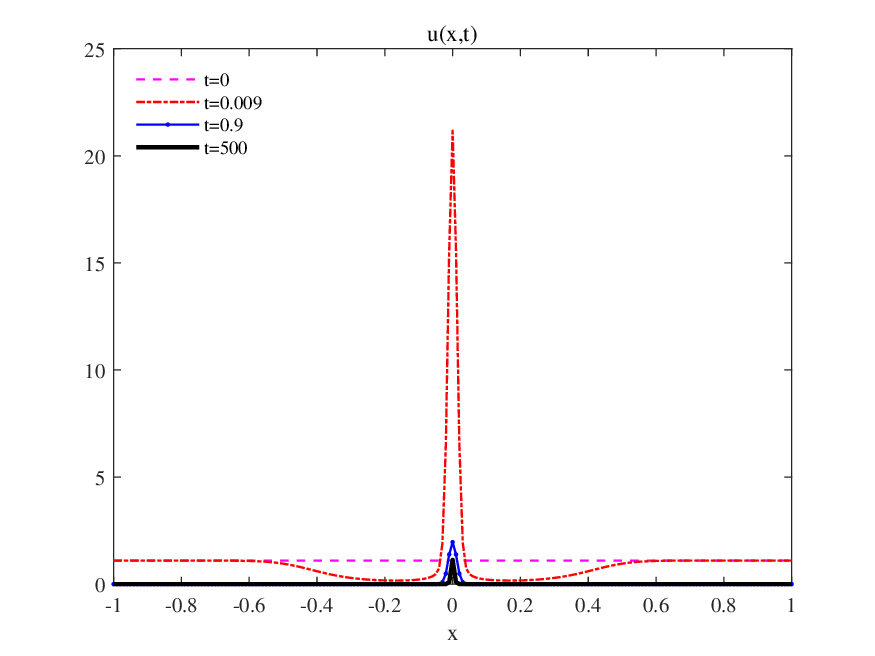}
  \caption*{$\chi=70$}
\end{subfigure}
\\ 
\caption{\textit{Dynamics of system (\ref{origin}) in a 1-D domain for different speeds $\chi$ with signal $A=\frac{5}{\sqrt{2\pi}}e^{-25x^2}$, Allee threshold $\theta=0.3$ and initial data $u_0(x)=1.1+0.001\cos(4\pi x)$, where the unit of time is second}.  We would like to mention that as $t$ increases, the solutions tend to be stable and finally converge to interior spikes and the solutions at $t=500$ can represent steady states of (\ref{origin}).  This figure also shows that the ``inner" region of steady states will shrink as $\chi\rightarrow \infty$ , but the height keeps a constant $1.1339$.}
\label{singledynamics}
\vspace{-0.0in}\end{figure}

\begin{figure}[h!]
\centering
\begin{subfigure}[t]{0.5\textwidth}
    \includegraphics[width=\linewidth]{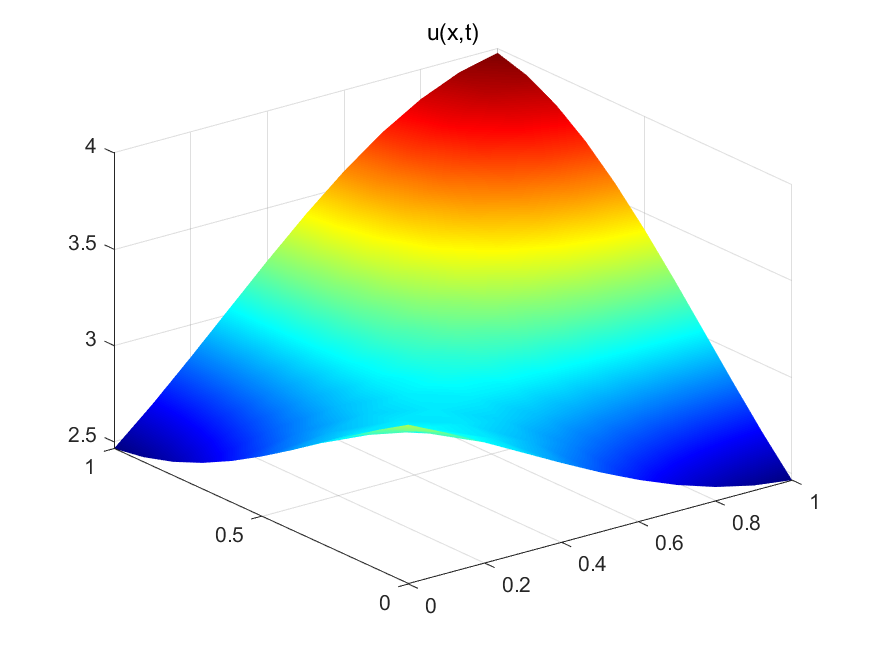}
      \caption*{t=0s}
\end{subfigure}\hspace{-0.25in}
\begin{subfigure}[t]{0.5\textwidth}
  \includegraphics[width=\linewidth]{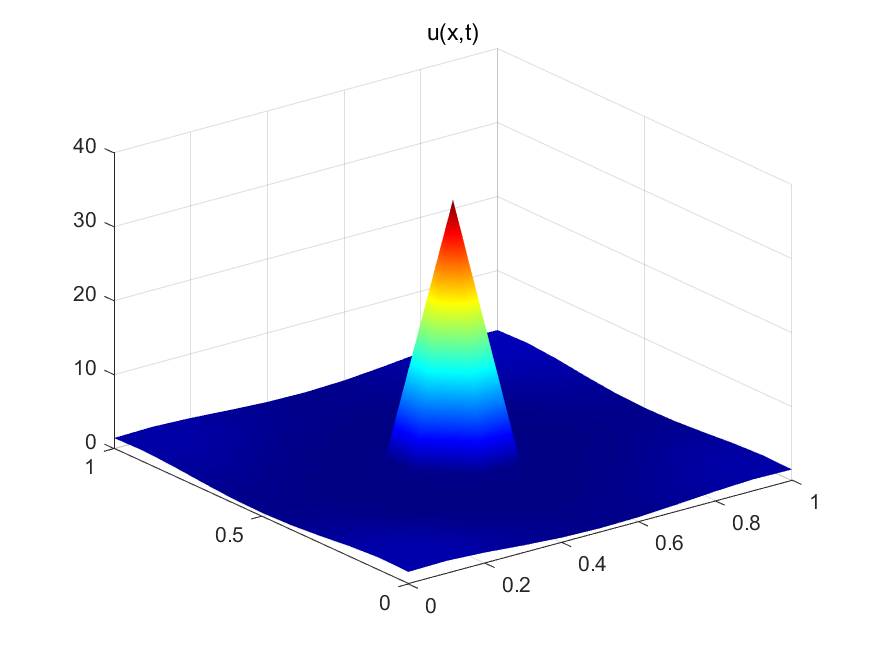}
        \caption*{t=0.01s}
\end{subfigure}

\begin{subfigure}[t]{0.5\textwidth}
    \includegraphics[width=\linewidth]{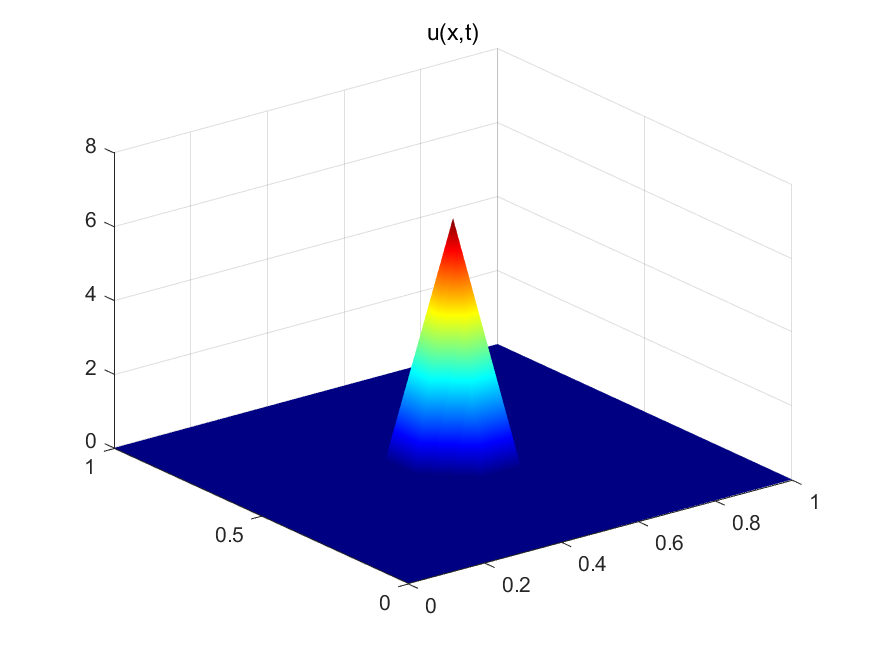}
          \caption*{t=0.1s}
\end{subfigure}\hspace{-0.25in}
\begin{subfigure}[t]{0.5\textwidth}
  \includegraphics[width=\linewidth]{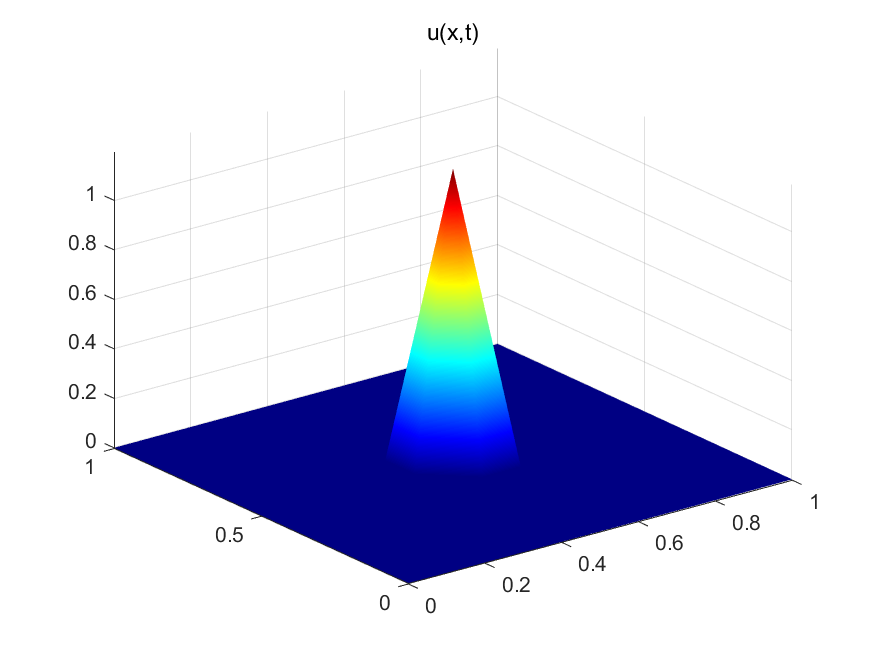}
        \caption*{t=1000s}
\end{subfigure}
\\ 
\caption{\textit{Dynamics of problem (\ref{origin}) in a 2-D rectangle with potential $A=\frac{5}{\sqrt{2\pi}}e^{-25[(x-{1}{2})^2+(y-\frac{1}{2}]^2}$, initial data $u_0=3+\cos(2x+1)\cos(2y+1)$, Allee threshold $\theta=0.3$ and conditional dispersal rate $\chi=20$.}  We can see that (\ref{origin}) admits the local stable interior spikes in the rectangular area.}
\label{2dsingle}
\end{figure}

Figure \ref{singledynamics} and Figure \ref{2dsingle} exhibit the pattern formation within system (\ref{origin}) when $A$ has only one local non-degenerate maximum point.  These figures illustrate that the single interior spike given by (\ref{u0}) with the height is $c_{01}$ is linearly stable.  Similarly, Figure \ref{unstabledynamics} shows that the single interior spike defined in (\ref{u0}) with the other positive height is unstable and some small perturbation will cause the time-dependent solutions to (\ref{origin}) move away from it.
\begin{figure}[h!]
\centering
\begin{subfigure}[t]{0.5\textwidth}
    \includegraphics[width=\linewidth]{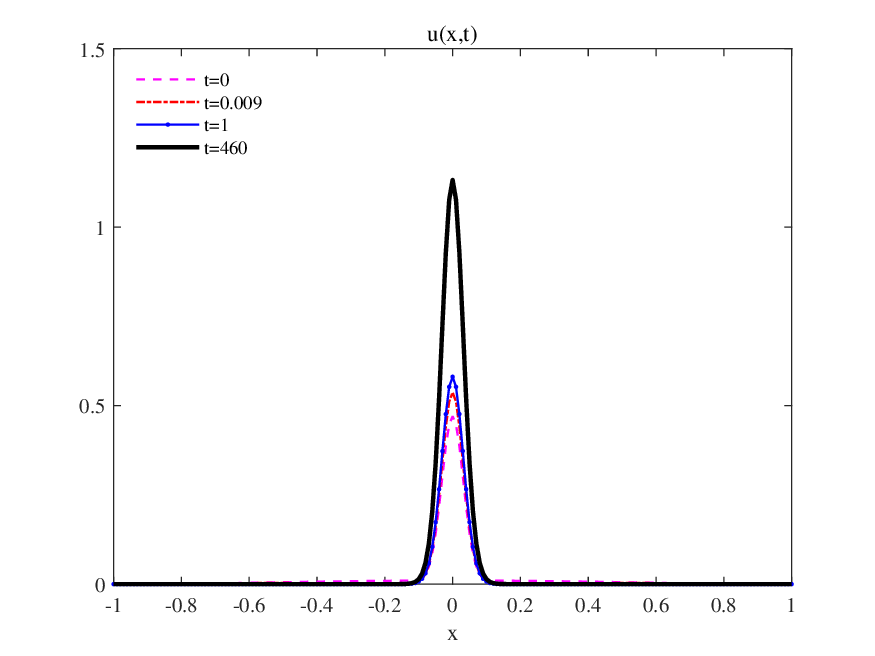}
\end{subfigure}\hspace{-0.25in}
\begin{subfigure}[t]{0.5\textwidth}
  \includegraphics[width=\linewidth]{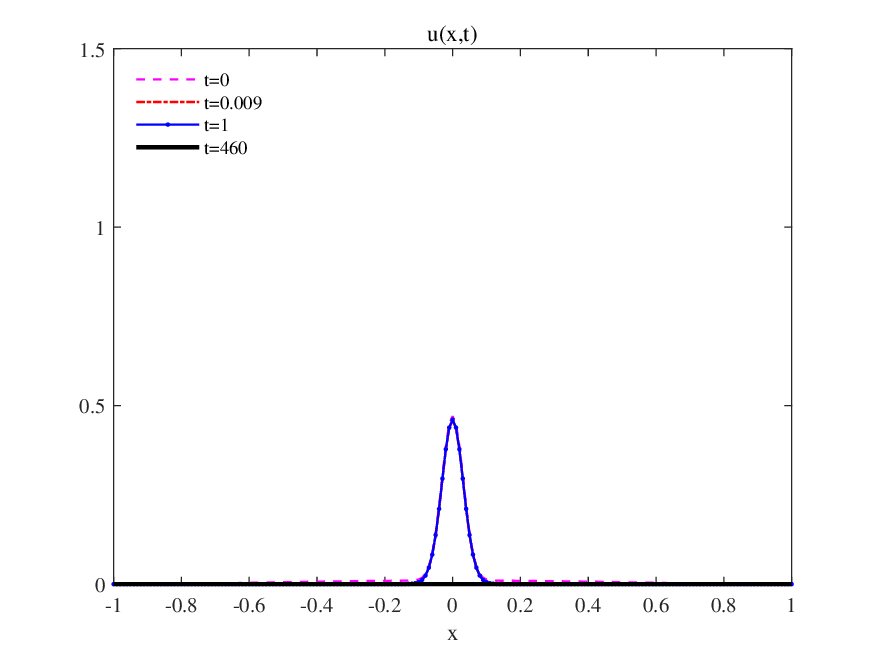}
\end{subfigure}
\\ \vspace{-0.15in}
\caption{\textit{$\theta=0.3$ and $\chi=10$.  Left: the solution of system (\ref{origin}) in 1-D at $t=0$, $0.009$, $1$ and $460 s$  with initial data $u_0(x)=0.46e^{-50\chi x^2}+0.01\cos(2 x)$; Right: the solution of system (\ref{origin}) in 1-D with initial data $u_0(x)=0.46e^{-50\chi x^2}+0.001\cos(2 x)$.}  We have $u(x,t)$ at $t=460$ can represent the steady state to (\ref{origin}) and it is shown that the single interior spike with the smaller height is unstable and converges to either the steady state with the larger height or zero in a long term.}
\label{unstabledynamics}
\end{figure}

We next present the stability of multi-interior spikes defined in (\ref{u0}) when signal $A$ admits two local non-degenerate maximum points.  Before that, the asymptotic profiles of them are shown in Figure \ref{multispike}.
\begin{figure}[h!]
\centering
\begin{subfigure}[t]{0.5\textwidth}
    \includegraphics[width=\linewidth]{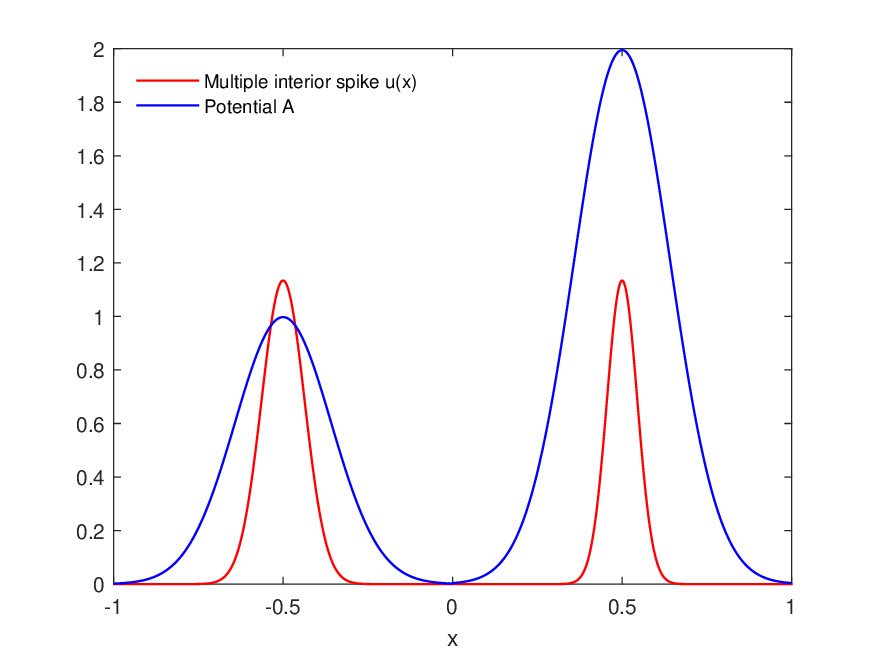}
\end{subfigure}\hspace{-0.25in}
\begin{subfigure}[t]{0.5\textwidth}
  \includegraphics[width=\linewidth]{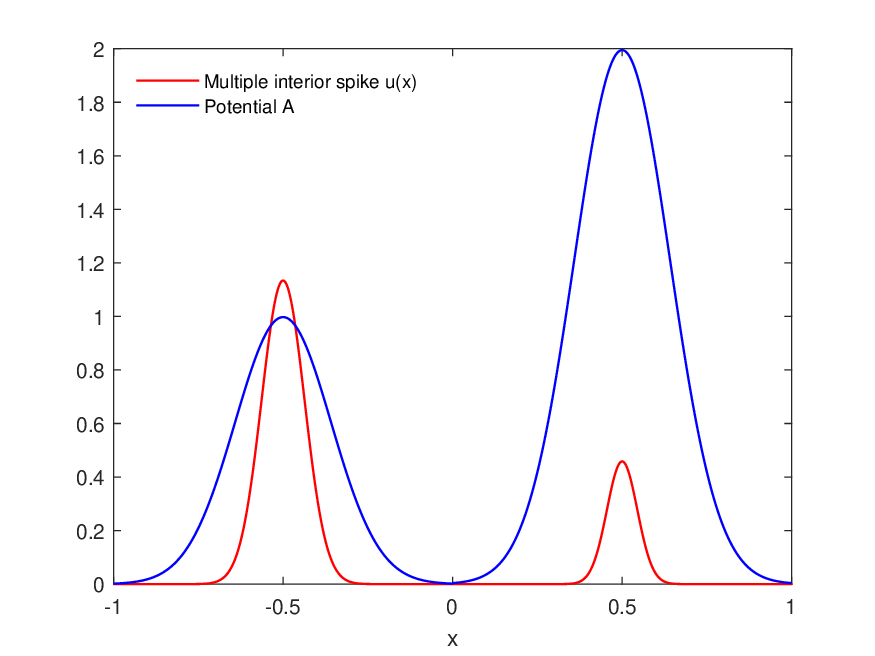}
\end{subfigure}

\begin{subfigure}[t]{0.5\textwidth}
    \includegraphics[width=\linewidth]{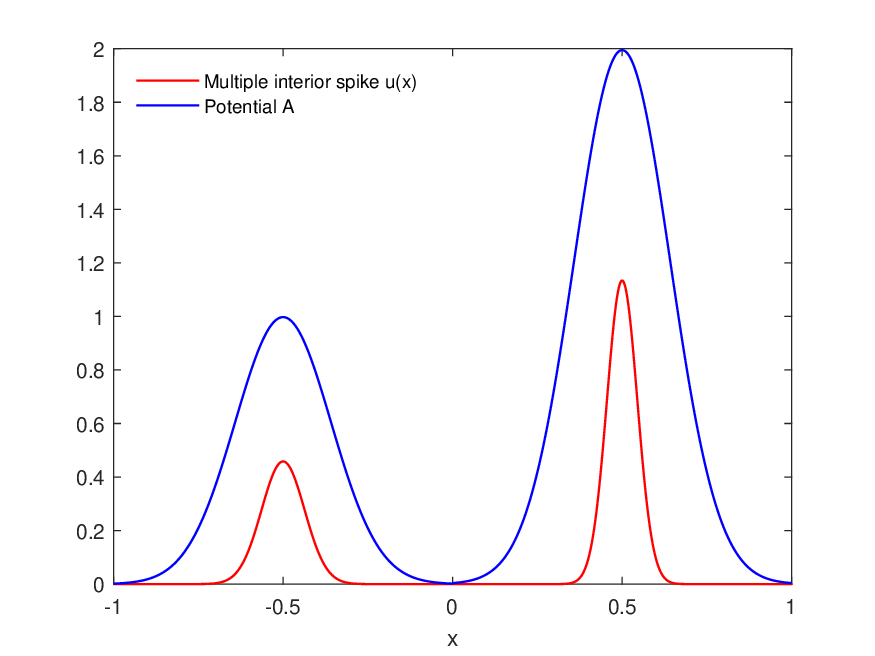}
\end{subfigure}\hspace{-0.25in}
\begin{subfigure}[t]{0.5\textwidth}
  \includegraphics[width=\linewidth]{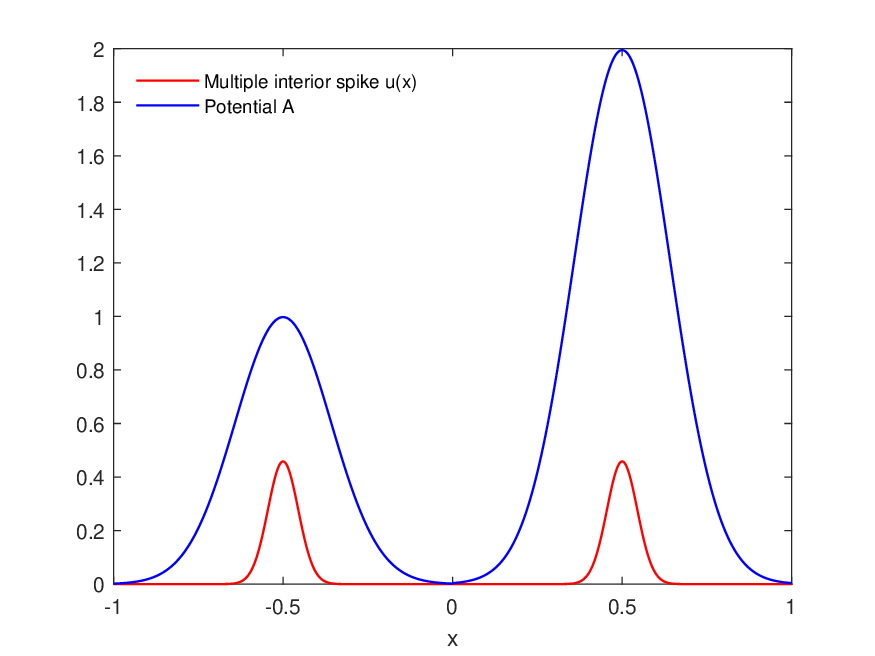}
\end{subfigure}
\\ \vspace{-0.15in}
\caption{\textit{For a 1-D domain, we have four types of interior spikes constructed in Theorem \ref{thm11} with the height of each bump is either $c_{01}=1.1339$ or $c_{02}=0.4582$ when potential $A=\frac{5}{\sqrt{2\pi}}e^{-25(x-0.5)^2}+\frac{5}{2\sqrt{2\pi}}e^{-25(x+0.5)^2}$, Allee threshold $\theta=0.3$ and conditional dispersal rate $\chi=10$.}  We find that (\ref{origin}) admits a variety of interior spikes when $A$ has more than $1$ local maximum points.}
\label{multispike}
\vspace{-0.0in}\end{figure}
Similar to the single interior spike, our numerical result shown in Figure \ref{multidynamic1} indicates that those multiple spiky solutions whose every bump has the larger height are local linearly stable.
\begin{figure}[h!]
\centering
\begin{subfigure}[t]{0.5\textwidth}
    \includegraphics[width=\linewidth]{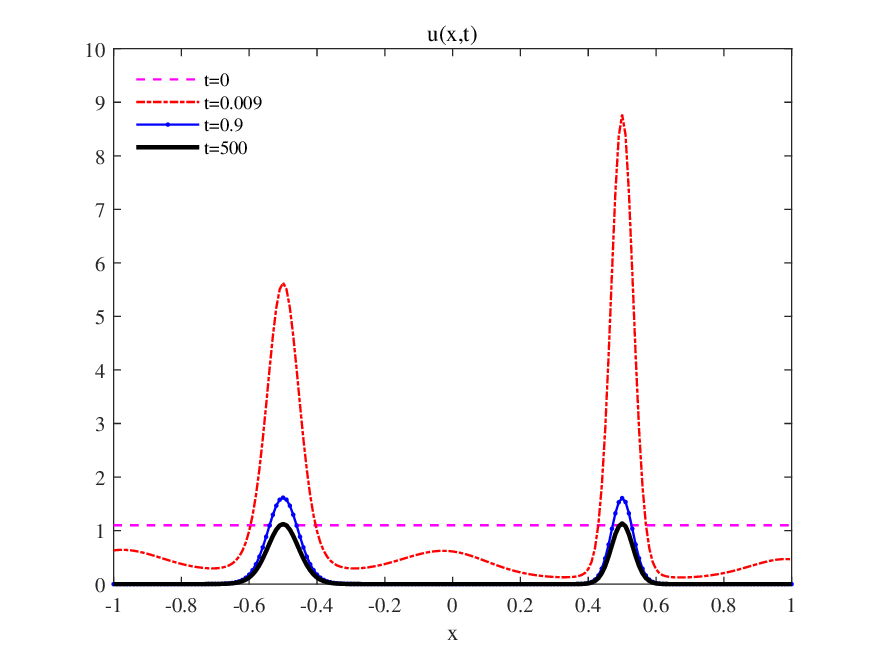}
      \caption*{Initial data $u_0=1.1$}
\end{subfigure}\hspace{-0.25in}
\begin{subfigure}[t]{0.5\textwidth}
  \includegraphics[width=\linewidth]{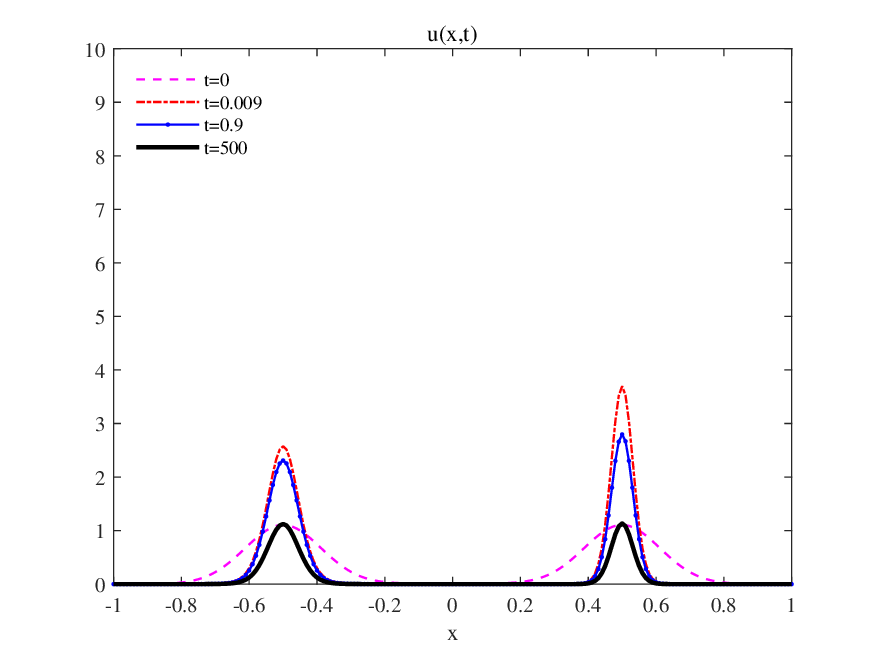}
        \caption*{Initial data $u_0=1.1e^{-40(x+1/2)^2}+1.1e^{-40(x+1/2)^2}+0.01\cos(2x)$}
\end{subfigure}

\begin{subfigure}[t]{0.5\textwidth}
    \includegraphics[width=\linewidth]{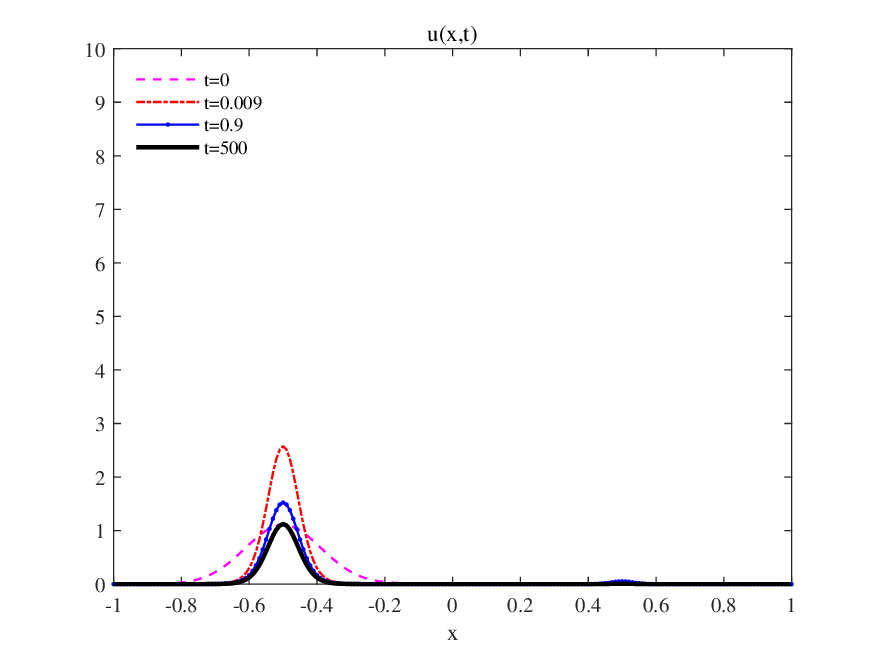}
          \caption*{Initial data $u_0=1.1e^{-40(x+1/2)^2}+0.01\cos(2x)$}
\end{subfigure}\hspace{-0.25in}
\begin{subfigure}[t]{0.5\textwidth}
  \includegraphics[width=\linewidth]{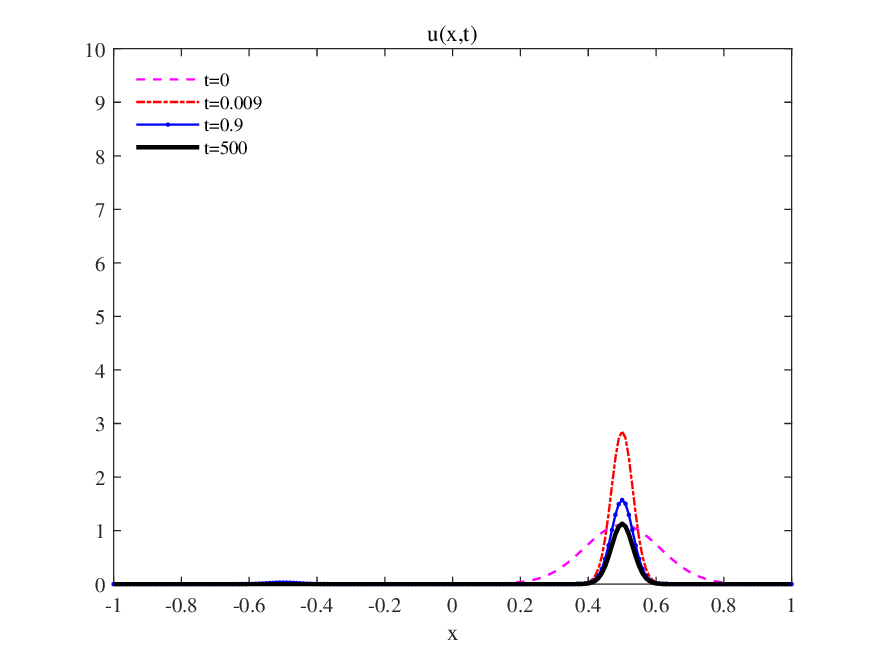}
        \caption*{Initial data $u_0=1.1e^{-40(x-1/2)^2}+0.01\cos(2x)$}
\end{subfigure}
\\ 
\caption{\textit{Dynamics of time-dependent problem (\ref{origin}) in 1-D with potential $A=\frac{5}{\sqrt{2\pi}}e^{-25(x-0.5)^2}+\frac{5}{2\sqrt{2\pi}}e^{-25(x+0.5)^2}$, Allee threshold $\theta=0.3$ and conditional dispersal rate $\chi=10$.}  The solutions to (\ref{origin}) at $t=500s$ can be regarded as the steady states and this figure demonstrates that steady states with the larger positive heights are locally linearized stable.}
\label{multidynamic1}
\end{figure}
In contrast, once one of their bumps admits the smaller height, the stationary solutions will become unstable, as shown in Figure \ref{multiunstable}.   
\begin{figure}[h!]
\centering
\vspace{-0.1in}
\begin{subfigure}[t]{0.3\textwidth}
    \includegraphics[width=\linewidth]{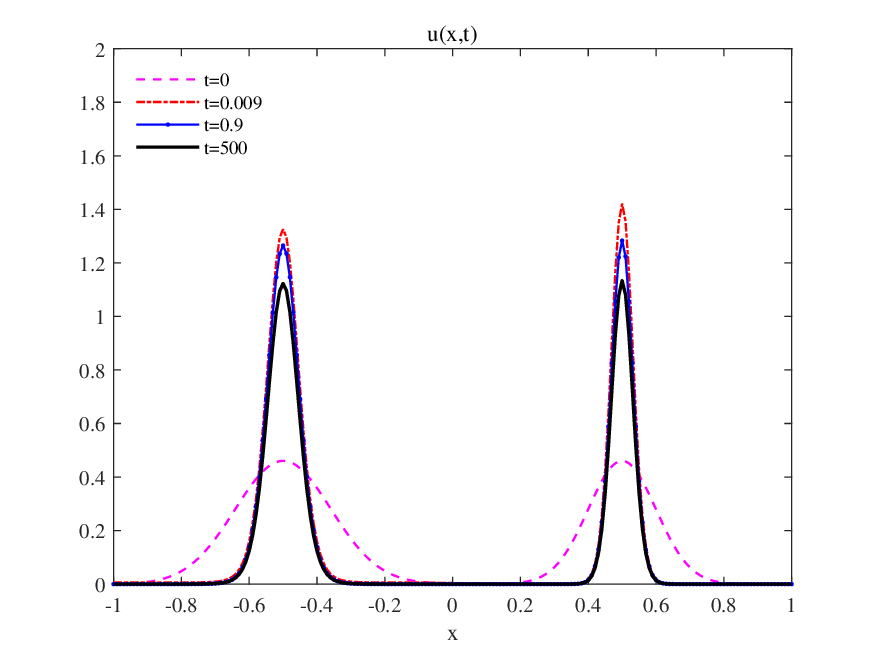}
\caption*{Initial data $u_0=0.46\Big[e^{-50(x+\frac{1}{2})^2}+e^{-50(x-\frac{1}{2})^2}\Big]+0.01\cos(x^2)$}
\end{subfigure}
\begin{subfigure}[t]{0.3\textwidth}
  \includegraphics[width=\linewidth]{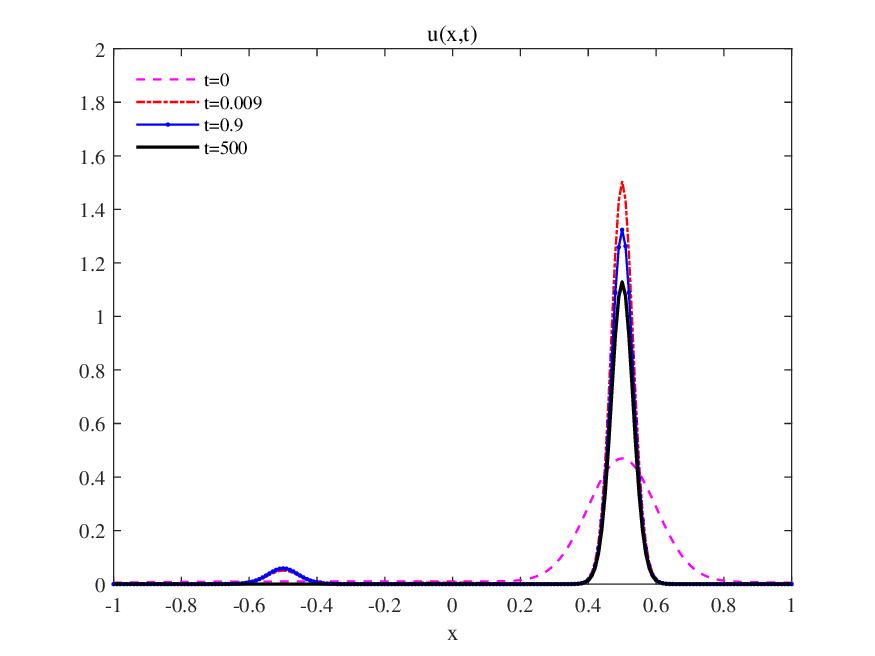}
\caption*{Initial data $u_0=0.46e^{-50(x-1/2)^2}+0.01\cos(x^2)$}
\end{subfigure}
\begin{subfigure}[t]{0.3\textwidth}
    \includegraphics[width=\linewidth]{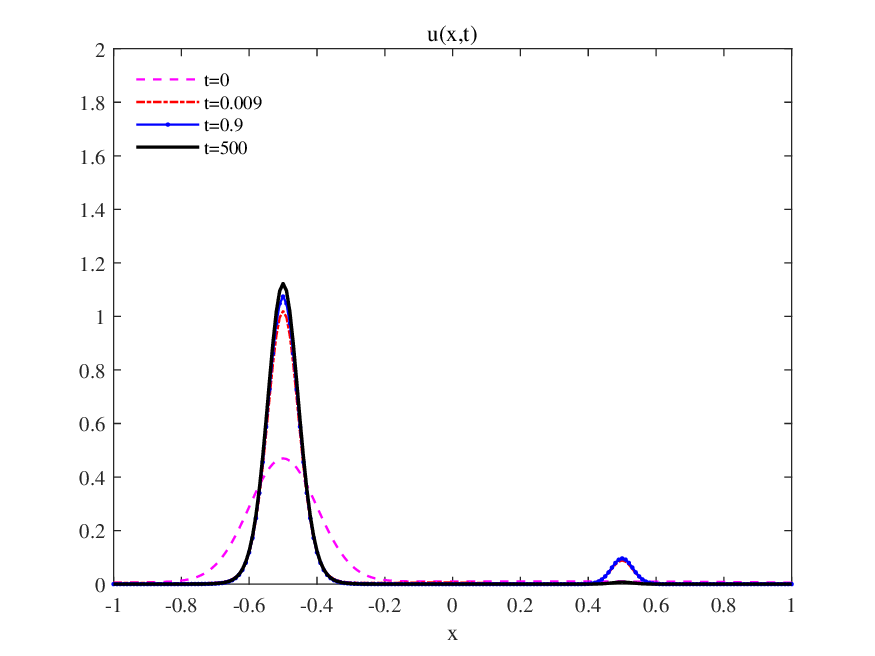}
\caption*{Initial data $0.46e^{-50(x+1/2)^2}+0.01\cos(x^2)$}
\end{subfigure}
\\ \vspace{-0.15in}
\caption{\textit{For a 1-D domain, given Allee threshold $\theta=0.3$, speed $\chi=10$ and signal $A=\frac{1}{\sqrt{2\pi}\sigma}e^{-\frac{(x-1/2)^2}{\sigma^2}}+\frac{1}{\sqrt{2\pi}\sigma}e^{-\frac{(x+1/2)^2}{\sigma^2}}$ with $\sigma=0.2$, we have the solutions to (\ref{origin}) with different initial data at $t=0$, $0.009$, $0.9$ and $500 s$.}  Similarly, the time-dependent solutions $u(x,t)$ at $t=500$ are steady states to (\ref{origin}), and the heights of every bumps in steady states are either $c_{01}$ or $O(\epsilon).$  According to the choice of initial data, we find the interior spikes with the smaller heights are unstable.}
\label{multiunstable}
\end{figure}

Figure \ref{system1} and Figure \ref{system2} exhibit the large time behavior of solutions to (\ref{IFD}) and (\ref{AA}), respectively.  From the viewpoint of population ecology, the numerical results shown in Figure \ref{system1} can be interpreted that the conservative species will be better off in the long run when the Allee threshold is small.  This phenomenon is counter-intuitive since one might believe that the higher speed benefits the persistence of a species, so that an aggressive species is more likely to survive.  Our result demonstrates that aggressive strategy is not always optimal and that an IFD strategy is preferable for species persistence in some cases.  A further qualitative result shown in Figure \ref{system2} is that competitive species does not like to coexist and, instead, prefer to occupy all resources by themselves.  Our interpretation of this result is that aggressive species do not want to share any resources with each other.  
\begin{figure}[h!]
\centering
\begin{subfigure}[t]{0.5\textwidth}
    \includegraphics[width=\linewidth]{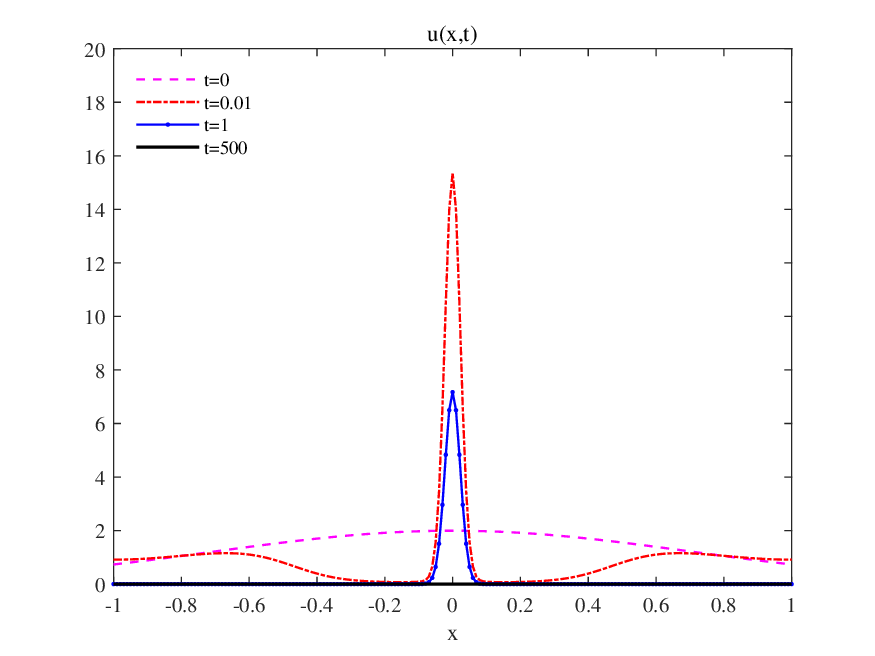}
    \caption*{$\theta=0.03$, $A=\frac{5}{\sqrt{2\pi}}e^{-25x^2}$\text{~and~} $v_0=0.1+0.01\cos x$ }
\end{subfigure}\hspace{-0.25in}
\begin{subfigure}[t]{0.5\textwidth}
  \includegraphics[width=\linewidth]{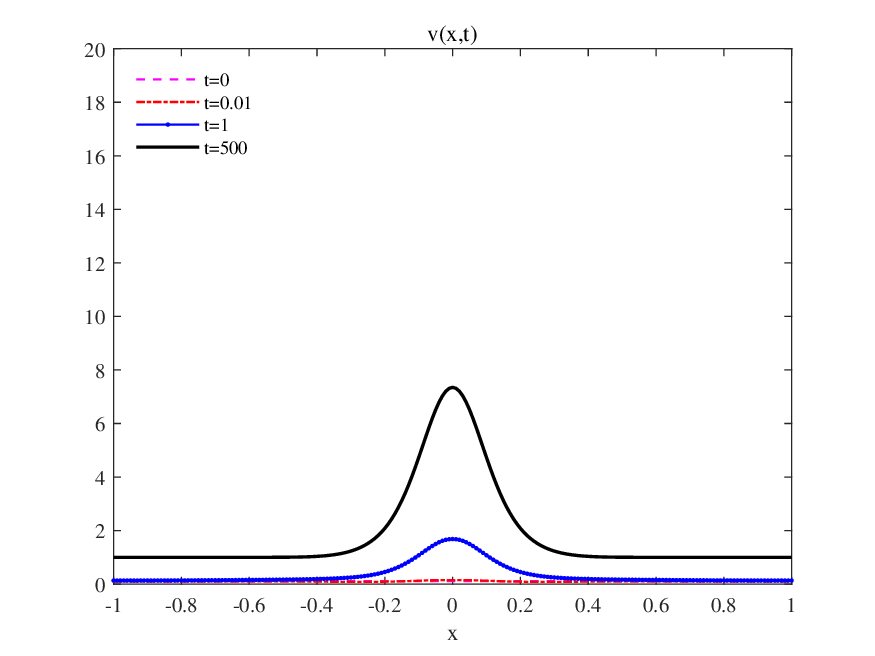}
  \caption*{$\theta=0.03$, $A=\frac{5}{\sqrt{2\pi}}e^{-25x^2}$\text{~and~}$u_0=\frac{5}{\sqrt{2\pi}}e^{-x^2}$ }
\end{subfigure}

\begin{subfigure}[t]{0.5\textwidth}
    \includegraphics[width=\linewidth]{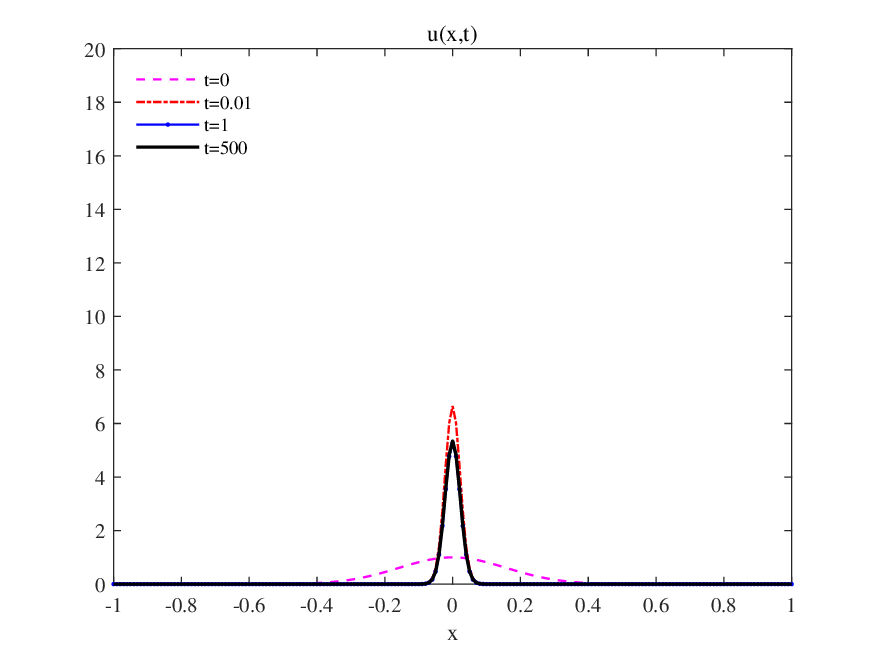}
    \caption*{$\theta=0.3$, $A=\frac{5}{\sqrt{2\pi}}e^{-25x^2}-0.5$\text~{and~}$v_0=e^{-\chi x^2}$}
\end{subfigure}\hspace{-0.25in}
\begin{subfigure}[t]{0.5\textwidth}
  \includegraphics[width=\linewidth]{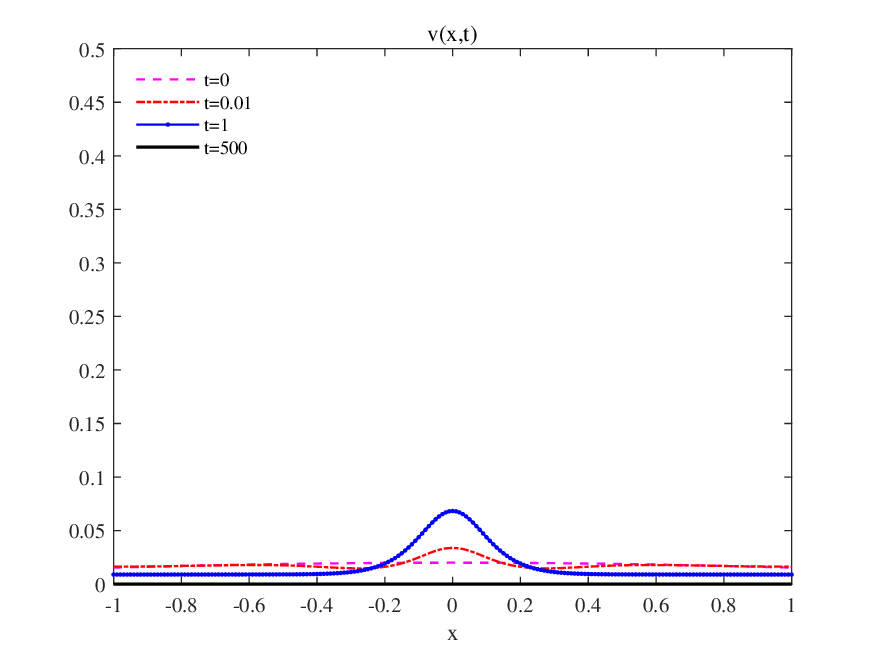}
  \caption*{$\theta=0.3$, $A=\frac{5}{\sqrt{2\pi}}e^{-25x^2}-0.5$\text~{and~}$u_0=0.01+0.01\cos x$}
\end{subfigure}

\begin{subfigure}[t]{0.5\textwidth}
    \includegraphics[width=\linewidth]{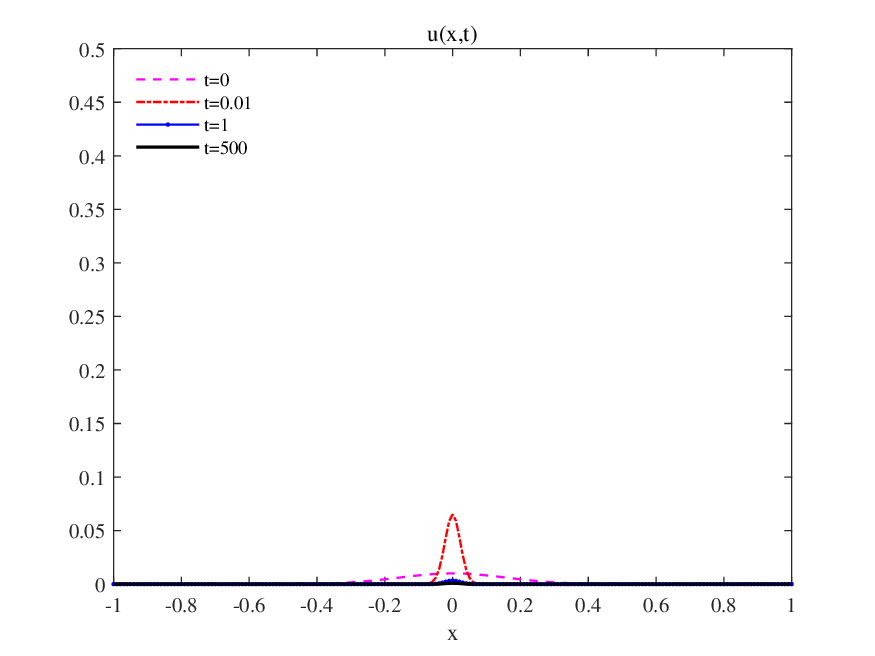}
    \caption*{$\theta=0.3$, $A=\frac{5}{\sqrt{2\pi}}e^{-25x^2}-0.5$ and $u_0=0.01 e^{-\chi x^2}$}
\end{subfigure}\hspace{-0.25in}
\begin{subfigure}[t]{0.5\textwidth}
  \includegraphics[width=\linewidth]{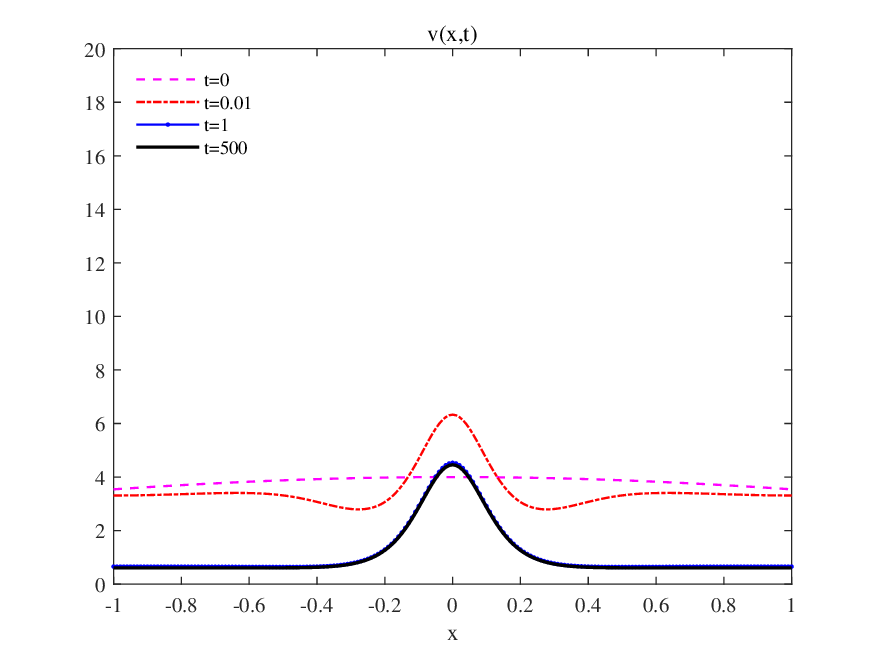}
  \caption*{$\theta=0.3$, $A=\frac{5}{\sqrt{2\pi}}e^{-25x^2}-0.5$ and $v_0=3+\cos x$}
\end{subfigure}
\\ 
\caption{\textit{Dynamics of system (\ref{IFD}) in 1-D for different $\theta$ with $\chi=20$, where the unit of time is second}.  It can be seen that when Allee threshold $\theta$ is small, the $u$-species persists while the $v$-species is extinct finally even though the amount of initial resources possessed by the $u$-species is small.}
\label{system1}
\end{figure}

\begin{figure}[h!]
\centering
\begin{subfigure}[t]{0.5\textwidth}
    \includegraphics[width=\linewidth]{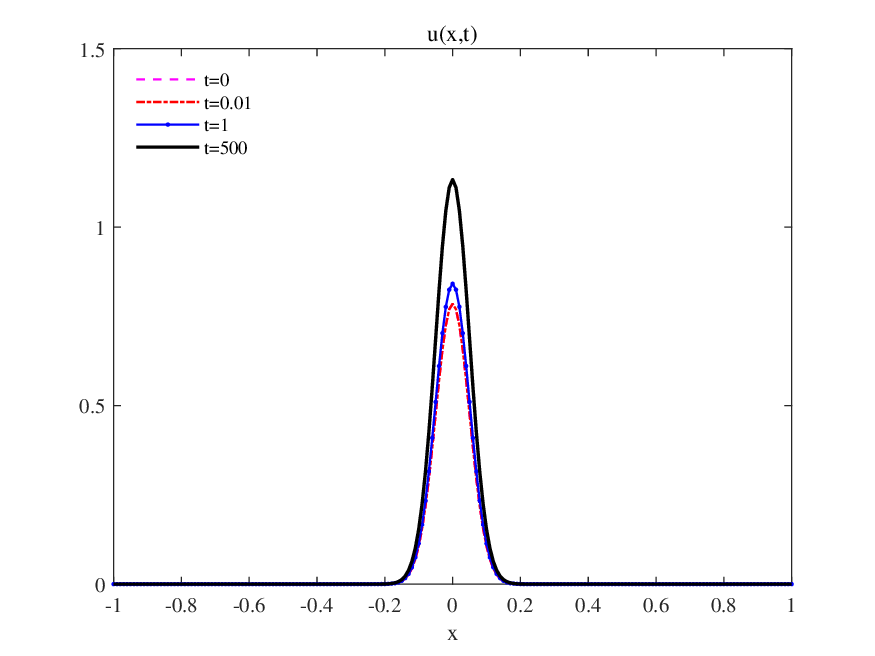}
      \caption*{Initial data $u_0=0.785 e^{-\chi x^2}$}
\end{subfigure}\hspace{-0.25in}
\begin{subfigure}[t]{0.5\textwidth}
  \includegraphics[width=\linewidth]{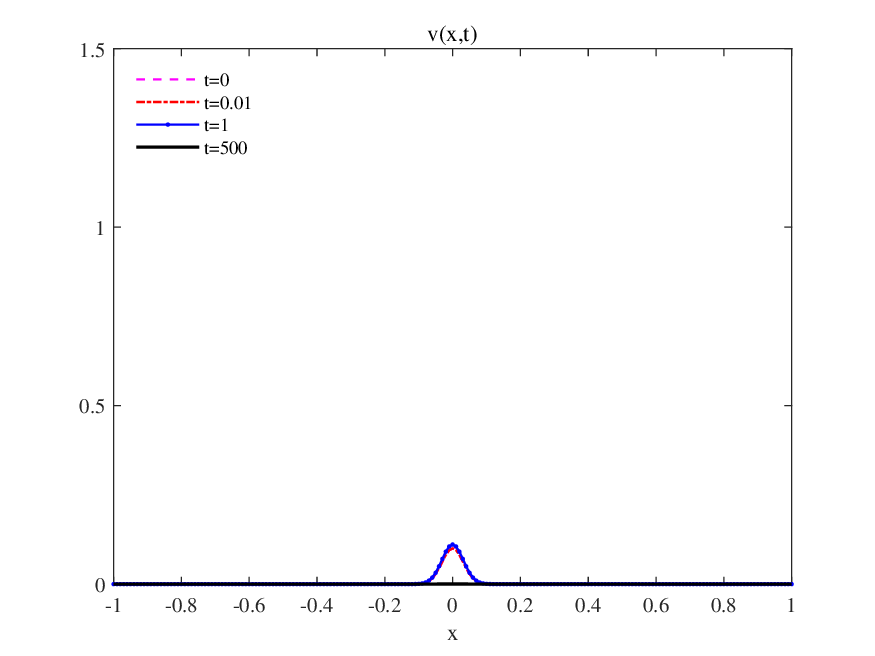}
        \caption*{Initial data $v_0=0.1e^{-\frac{5}{2}\chi x^2}$}
\end{subfigure}

\begin{subfigure}[t]{0.5\textwidth}
    \includegraphics[width=\linewidth]{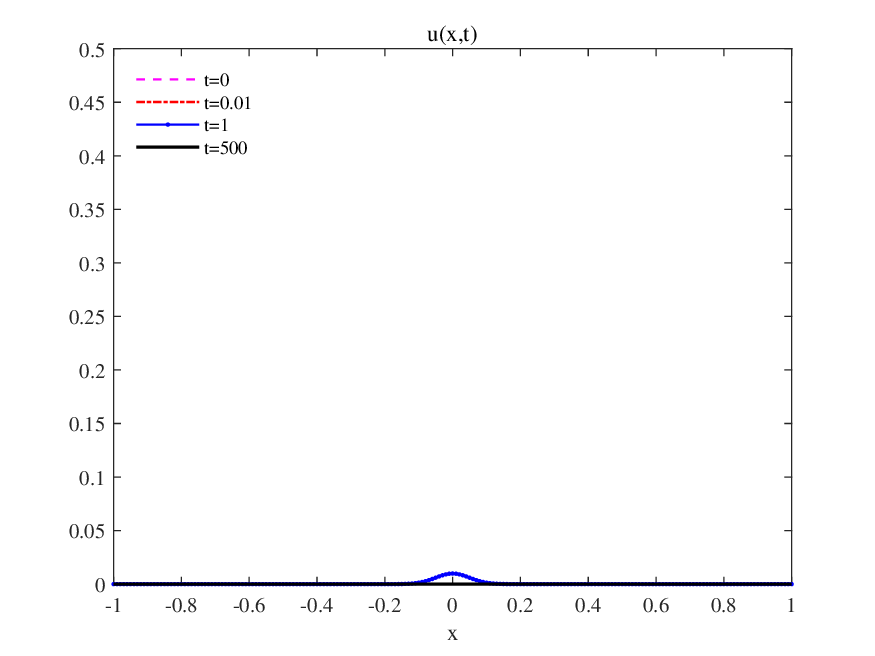}
          \caption*{Initial data $u_0=0.01e^{-\chi x^2}$}
\end{subfigure}\hspace{-0.25in}
\begin{subfigure}[t]{0.5\textwidth}
  \includegraphics[width=\linewidth]{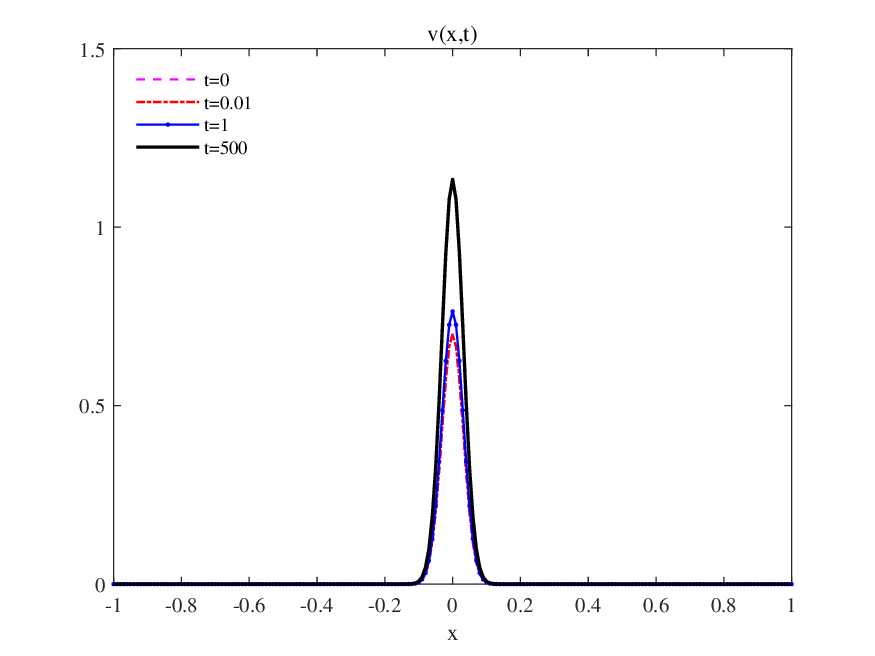}
        \caption*{Initial data $v_0=0.7e^{-\frac{5}{2} \chi x^2}$}
\end{subfigure}
\\ 
\caption{\textit{Dynamics of system (\ref{AA}) in 1-D with potential $A=-x^2$, Allee threshold $\theta=0.3$, aggressive ratio $c=2.5$ and conditional dispersal rate $\chi=200$.} We find either the $u$-species attains the nontrivial pattern with the $v-$species vanishes or $v$ survives with $u$ vanishes.}
\label{system2}
\end{figure}
\subsection{Discussion}
We have used the reduction method to construct and study the linear stability of localized solutions to the single species models (\ref{origin}) and competition models (\ref{14}) in the limit of an asymptotically large speed $\chi\gg 1$.  Our main contribution has been the rigorous analysis of the existence of localized patterns and their stability properties.  Under the technical assumptions (A1), (A2) and (H1), (H2), we showed that (\ref{origin}) admits many localized solutions when the potential $A$ has multiple maximum points.  In particular, there are two possible heights for every local bump.  Regarding the stability properties, we proved that once some local bump has the small height, the spike will be unstable.  We next focused on the analysis of the population model (\ref{14}).  On the one hand, we proved the non-coexistence of two competing species who follow the aggressive strategy and IFD strategy, respectively, for Allee threshold sufficiently small.  Moreover, we found that when the Allee threshold $\theta$ is small but independent of $\chi$, the species who follows the aggressive strategy can persist when all peaks have the large heights; while $\theta$ depending on $\chi$ is small, the aggressive strategy will lead to the extinction of species.  On the other hand, with the assumption that two species both follow the aggressive strategy, we showed that even though the localized patterns might coexist in local bumps, they are unstable.

We would like to mention that there are also some open problems that deserve future explorations.  While discussing the existence of interior spike steady states, we impose some technical assumptions on $A$; for instance, we assume that $A$ has $k$ non-degenerate maximum points.  Whether or not these assumptions can be removed remains an open problem.  Besides the stable interior spikes, we believe that (\ref{origin}) also admits the stable boundary spikes, and the rigorous analysis needs to be established.  Regarding the population system (\ref{14}), we only study the influence of large advection on the population evolution of interacting species.  The effect of small diffusion $d_1$ is apparently another delicate problem that deserves probing in the future.
\section*{Acknowledgments}

 We thank Professor M. Ward for stimulating discussions, critically reading the manuscript and  many critical suggestions. We also thank Professors C. Cosner and N. Rodriguez for  valuable comments.  The research of J. Wei is partially supported by NSERC of Canada.

\bibliographystyle{plain}
\bibliography{ref}
\end{document}